\newcommand\pd[2]{\frac{\partial #1}{\partial #2}}
\newcommand\ddt{\frac d{dt}}
\newcommand\adx{\hbox{ad}_{\cX}}
\newcommand\Je[1]{J^{#1}(\lR^n,\lR^m)}
\newcommand\Jrr{J^\infty(\lR,\lR^2)}
\newcommand\LB{Lie-B\"{a}cklund}
\newcommand\na{|\alpha|}
\newcommand\nb{|\beta|}
\newcommand\nga{|\gamma|}
\newcommand\ds{\displaystyle}
\newcommand\ya{y^\alpha}
\newcommand\yb{y^\beta}
\newcommand\yg{y^\gamma}
\newcommand\ys{y^\sigma}
\newcommand\wsp[3]{#1^{(#2)}_{#3}}
\newcommand\cC{{\mathcal C}}
\newcommand\cF{{\mathcal F}}
\newcommand\cG{{\mathcal G}}
\newcommand\cL{{\mathcal L}}
\newcommand\cX{{\mathcal X}}
\newcommand\cY{{\mathcal Y}}
\newcommand\cZ{{\mathcal Z}}
\newcommand\lC{{\mathbb C}}
\newcommand\lD{{\mathbb D}}
\newcommand\lR{{\mathbb R}}
\newtheorem{definition}{Definition}
\theoremstyle{plain}
\newtheorem{theorem}{Theorem}
\newtheorem{lemma}{Lemma}
\newtheorem{proposition}{Proposition}
\newtheorem{problem}{\bf Problem:}
\newtheorem*{tha}{Theorem 7(A)}
\newtheorem*{thb}{Theorem 7(B)}
\newtheorem*{thc}{Theorem 7(C)}
\title{Exponentiable \LB\ vector fields in \protect\(J^\infty(\lR^n, \lR^m)\protect\)}
\author{Ana Maria Maia Pastana \\
Sportlaan 22,
9728 PJ, Groningen - The Netherlands.\footnote{e-mail: ana.pastana@live.nl}}
\begin{document}
\maketitle
\vfill \eject
\begin{abstract}We characterize \LB\ vector fields in infinite dimensional jet bundles \(J^\infty(\lR^n,\lR^m)\) that can be exponentiated to flows with each component depending on a finite set of variables. We show that for \(m=1\) each such field is an extension of one in a finite dimensional jet space. For  \(m>1\) this is no longer the case and we give necessary and sufficient conditions for exponentiation. Non-trivial examples are provided for \((n,m)=(1,2)\).
\end{abstract}

\section{INTRODUCTION}\label{sec:intr}

\LB\ vector fields in jet bundles are ones whose flow preserves, in a technical sense to be explained below, the jet structure of the bundles.
Such vector fields have seen application to the transformation theory of differential equations \cite{olverLG}. We here address the question of such vector field in infinite dimensional bundles seeking criteria for their exponentiability to a flow in these bundles.

Vinogradov \cite{vino:smd14.661} in 1973 introduced some results concerning local isomorphisms of finite dimensional jet bundles.

In 1976, Ibragimov and Anderson \cite{ibra-ande:SMD17.437} considered \LB\ fields in jet bundles of infinite dimension
in an attempt to classify those that are extensions of \LB\ fields in bundles of finite dimension.

In \cite{vino:SMD20.985}, Vinogradov related them to symmetries of differential equations.

Independently but in the same spirit,  in 1980, Otterson and Svetlichny \cite{otte-svet:JDE36.270} sought to systematize
the study of transformations of differential equations having obtained results similar to
those of Vinogradov \cite{vino:smd14.661,vino:SMD20.985} and Ibragimov and Anderson \cite{ibra-ande:SMD17.437}, extending them in certain situations.

Many authors have used \LB\ fields in various applications to differential equations.

Fokas  \cite{foka:JMAA68.347} and Fokas and Lagerstrom  \cite{foca-lage:JMAA74.328}
used them to study the separable solutions of the Hamilton-Jacobi equations.
Fokas and Lagerstrom  \cite{foca-lage:JMAA74.342} also used them to relate classical and quantum invariants of differential equations, their relation to symmetries, and the possibility of separation of variables.

Ibragimov  \cite{ibra:CRASP293.657} used them to show the equivalence of the K.~d.~V. equation, \(v_t = v_{xxx}+ v v_x\),
and equation \(u_t=u^3u_{xxx}\).

The cited authors make little reference, in these works, to the exponentiability of the \LB\ fields.
Ibragimov and Andeson \cite{ibra-ande:SMD17.437} suggest a method of exponentiation based on the work of
Ovsjannikov  \cite{ovsj:SMD12.1497}, however this method involves resolving partial differential equations.

We consider this somewhat inconvenient and circular, in a certain sense, that the study of the transformations of differential equations depend on the resolution of equations of the same type.

In this paper we find algebraic relation on the components of an \LB\ field sufficient for it to be exponentiable to a flow in which each component depends on a finite number of variables. From a practical point of view, these relations should be finite in number.

Concretely we show in Section \ref{sec:prim} that there are in \LB\ fields in $J^\infty(\lR^n,\lR)$ beyond those already realized in finite dimensional spaces. These were exhaustively studies by Otterson and Svetlichny \cite{otte-svet:JDE36.270}.

Contrariwise, in $J^\infty(\lR^n,\lR^m),\, m > 1$, there are exponentiable \LB\ fields that are not realized in finite dimensional jet bundles. In Section \ref{sec:segu} we present a finite number of necessary and sufficient conditions for a \LB\ field \(X\) to be exponentiable in $J^\infty(\lR^n,\lR^m),\, m > 1$.  All conditions are on flows of fields in finite dimension conveniently related to  \(X\).

In Section \ref{sec:terc}, we  present a finite number of necessary and sufficient condition on the components of a particular type of \LB\ field in $\Jrr$ to be exponentiable, constructing its flow. There is interesting geometry involved which we exploit to find  non-trivial examples  of exponentiable \LB\ fields in $\Jrr$.

This work was done in partial fulfilment for the Doctorate degree in Mathematics at the Pontef\' icia Universidade Cat\'olica of Rio de Janeiro, Brazil, 1984.

\section{Preliminaries}\label{sec:prel}

In this section we present the terminology and concepts that we use in subsequent sections, the well the mathematical results needed for our development.

Let $\alpha = (\alpha_1,\alpha_2, \dots ,\alpha_n)$ be a multiindex.
Define \(\na\) as \( \sum_{i=1}^n\alpha_i\). We indicate by  \(i\) the multiindex
$(0, \dots ,0,1,0, \dots ,0)$, where \(1\)
is in the  \(i\)-th position, and by \(0\)  the multiindex $(0, \dots  ,0)$.

Each pair $(L,\alpha)$,
where $L\in  \{1,2, \dots  ,m\}$
and \(\alpha\) is a multiindex will be called a \emph{multipair}.

For a set $A$ of multipairs defines  \(|A|=\sup\{\na \,|\,(L,a) \in  A\}\).

Every set of multipairs used here will necessarily contain the elements
$(L, 0)$ for each $L$.

Consider  a map $f:U\to \lR^m$ with $U$ open in  $\lR^n$ and with
components $f_1,\dots,f_m$. For a multipair \((L,\alpha)\), we write
\(\ds \frac{\partial^\alpha f_L}{\partial x^\alpha}\) to refer to
\(\ds \frac{\partial^{\na}}{\partial x_1^{\alpha_1}\cdots \partial x_n^{\alpha_n}}f_L\).

We reserve the letters  \(f\), \(g\) \ldots to indicate local maps from
$U \subset \lR^n$ to $\lR^m$.

Given $A$, 	a set of multipairs, consider the jet bundle
\begin{diagram}
J^A(\lR^n,\lR^m)\\
\dTo_\pi \\
\lR^n \\
\end{diagram}
where the fiber over
 $x$ is the set of equivalence classes of maps
\(f\) from neighborhoods \(V\) of \(x\) in \(\lR^n\) to \(R^m\),
defined by the relation\[ f\sim g \Leftrightarrow \frac{\partial^\alpha f_L}{\partial x^\alpha}(x)=\frac{\partial^\alpha g_L}{\partial x^\alpha}(x),\,(L,\alpha)\in  A.\]

We adopt identifications displayed by the following diagram:

\begin{diagram}
J^A(\lR^n,\lR^m) &\quad \simeq & \lR^n \times \lR^A \\
\dTo_\pi & & \dTo_\pi\\
\lR^n& \rTo & \lR^n \\
\end{diagram}
where the points of \(\lR^A \) will be designated by \(\ya_L\).

\begin{definition}\label{defjtau}
 Let  $f: U \to \lR^m$ and define:
 \begin{enumerate} \label{defj}
 \item  \(\ds J^Af:x\mapsto \left(\frac{\partial^\alpha f_L}{\partial x^\alpha}(x)\right)\), for \((L,\alpha)\in  A\),\\ \label{deftau}
 \item   \(\ds \tau^Af:x\mapsto (x,J ^Af(x))\in J^A(\lR^n,\lR^m)\),
 \item \(\Gamma^A f=\tau^Af(U)\subset J^A(\lR^n,\lR^m)\).
 \end{enumerate}
\end{definition}

In particular if  \(A\) is the set of all multipairs $(L, \alpha)$ with $\na\le k$
we denote $J^A(\lR^n,\lR^m)$ by $J^k(\lR^n,\lR^m)$ and if $A$  is the set of all
multipairs, we use the notation
$J^\infty(\lR^n,\lR^m)$.

Note that \(\Gamma^Af\) is the graph of \(J^Af\)

We work with cartesian products $\lR^A$ of the real line and whenever we say
$\Phi:E\to F$ is a map, this will mean $\Phi$ has components \(\Phi_\lambda\) such that
for each $\lambda \in A$ there is  an open set $U_\lambda$  contained in  $E_\lambda$,
a finite subproduct of   $E$  with  $\pi_\lambda:E  \to  E_\lambda$  being the projection, and
a \(\cC^\infty\) map $\ds \Psi_\lambda:U_\lambda \to \lR$
such that \(\Phi_\lambda\)  factors as
$\ds \pi_\lambda\circ \Psi_\lambda: \pi_\lambda^{-1}(U_\lambda)\to U_\lambda\to\lR$.

Let $A$  be a finite set of multipairs.

Given a map $F:J^A(\lR^n,\lR^m)\to \lR$,
we say that $f$ satisfies the differential equation
 $F=0$ if
$F(\tau^A f) = 0$ or, equivalently
$\Gamma^A f \subset F^{-1} (0)$.

Given $\Phi:J^A(\lR^n,\lR^m)\to J^A(\lR^n,\lR^m)$ a local $\lC^\infty$ map,
consider $G = F \circ \Phi$
as a new differential equation. A function
  $f$ satisfies the new equation when
$\Phi(\Gamma^A f ) \subset F^{-1}(0)$.
However $\Phi(\Gamma^A f )$
may not be $\Gamma^A g$
for some $g$,
that is,
$\Phi$  does not necessarily relate the solutions of one equation to the solutions of the other.
A proper theory of transformations of differential equations should, nevertheless, transform solution along with
the equations.
Toward this end we study a special type of
local diffeomorphism, to whit: let $\cF$,
be the family of all subsets of $J^A(\lR^n,\lR^m)$
of the form $\Gamma^A f$,
We want to study the local diffeomorphisms of  $J^A(\lR^n,\lR^m)$ that leave
$\cF$ invariant, meaning that for some $g$,
$\Phi(\Gamma^A f)= \Gamma^A g$
whenever $\Phi(\Gamma^A f)$
is a graph of  some map \(\lR^n\to\lR^m\). The precise local expression of this
idea will be given further below.

Recall that every $\cC^\infty$ vector field $\cX$
on a finite dimensional manifold $M$
is integrable, that is,  $\cX$  has a maximal \emph{flow}
which is a map $\Phi: D\to M$,
where $D = \{(x,t)
\in M\times \lR, t \in   J_x \}$
is open and each  $J_x$ is a maximum open interval containing $0$ such that
$t \mapsto  \Phi(x,t)$
is an integral curve of  $\cX$
with $\Phi(x,0)=x$.
We write $\Phi_t(x)$ for $\Phi(x,t)$.

\begin{definition}\label{defo1}
A vector fields
$\ds \cZ = \sum_{\lambda \in A} \zeta_\lambda(z)\pd{}{z_\lambda}$
 in an infinite products of  real lines
 $E=\lR^\Lambda$
 is \emph{exponentiable} when there is  a map
\(\Phi\) in $E \times \lR$,
of the previously defined type, with components $(\Phi_t)_\lambda$
such that
$\ds  \left. \frac d{dt}(\Phi_t)_\lambda\right|_{t=0}=\zeta_\lambda(z)$
and $\Phi_t\circ\Phi_s=\Phi_{t+s}$.
\end{definition}

In other words, each component \((\Phi_t)_\lambda\) of the flow of \(\cZ\)
depends on a finite set of variables that can be chosen to be independent of  $t$.

\begin{definition}\label{deftd}
  The \emph{total derivative} fields \(\lD^i\) in  $J^\infty(\lR^n,\lR^m)$
are defined as
\begin{equation}\label{totder}
 \lD^i=\pd{}{x_i}+\sum_{L,\alpha}y_L^{\alpha+i}\pd{}{y_L^\alpha}.
\end{equation}
\end{definition}

It is easily shown that \([\lD^i,\lD^j] =0\) and so for a  multiindex \(\alpha\) we define \(\lD^\alpha=(\lD^1)^{\alpha_1}\cdots (\lD^n)^{\alpha_n}\).
One also has
\begin{equation}\label{dD}
\ds \pd{}{x_i}(F\circ \tau^\infty f)=(\lD^i F)\circ \tau^\infty f.
\end{equation}

\begin{definition}\label{defSx}
  Let $X$ be a topological space, $x\in X$ and
$S\subset X$. The \emph{germ \(S_x\) of \(S\) at \(x\)} is the equivalence class of \(S\),
where \(S\sim T\) whenever there is a neighborhood \(N\) of \(x\)
such that $S \cap N = T \cap  N$.

Let  \(x\in \lR^n\) and \(\cF\) a set if functions defined on open sets of \(\lR^n\). For \(f\in \cF\), the \emph{germ}
\([f]_x\) of \(f\) at \(x\) is the equivalence class of \(f\) by which \(f\sim g\) whenever there is a neighborhood \(N\) of \(x\)
such that \(f|N=g|N\).
\end{definition}

\begin{definition}\label{LB}
A \({\cC}^\infty\) vector field \(\cX\) with flow \(\Phi_t\)
defined in a open set
 \(W\) in \(J^A(\lR^n,\lR^m)\) with finite
 \(A\) is called a \emph{Lie-B\"{a}cklund } field,  if for each
 \(f\) such that \(\Gamma^A f \subset W\)
and for each \(x_0\) in the domain of  \(f\) there exists
a \(\delta > 0\), and a neighborhood \(V\) of  \(x_0\)
such that
\begin{equation}\label{eLB}
\Phi_t\left((\Gamma^A f)_{\tau^Af(x)} \right)=(\Gamma^Af_t)_{\Phi_t\tau^Af(x)}
\end{equation}
for all \(t\), \(|t| < \delta\) and all \(x\) in \(V\) and a function \(f_t\) for each \(t\).
\end{definition}

\begin{definition}\label{fg}
For a \LB\ field \(\cX\)  as in Definition \ref{LB}, let
\(G\) be the set of germs of functions \(f:R^n\to R^m\)
such that \(\Gamma^A f\subset W\).

The map \([f]_x \mapsto [f_t]_{\Psi_t(x)}\) where
\begin{equation}\label{germflo}
 \Psi_t(x) =\pi\circ\Phi_t(\tau^Af(x))
\end{equation}
will be called the \emph{flow induced on germs}.
\end{definition}

For a flow of a \LB\ field one has
\begin{equation}\label{phtpst}
  \Phi_t\circ \tau^Af(x)=\tau^Af_t(\Psi_t(x))
\end{equation}

Given a set \(A\) of multipairs defines
\[A' = A \cup \{(L, a+i), i = 1,2, \dots ,n,  (L,a) \in   A\}\]
and \(A^{[\ell]}\) the set obtained from  \(A\)
repeating the process \(\ell\) times.

Note that if  \(\ell = \infty\),
\(A^{[\infty]}\) is the set of all multipairs.

Consider a  \LB\ vector field in
\(J^A(\lR^n,\lR^m)\), with \(A\) finite or infinite:
\begin{equation}\label{lbvf}
  \cX=\sum_{i=1}^n\xi_i\pd{}{x_i} +\sum_{(L,\alpha)\in A}\eta^\alpha_L\pd{}{\ya_L}
\end{equation}
and let \(\Phi_t=(\Xi_{it}, H^\alpha_{Lt})\)
be its flow.

With \(A\) finite and given \(\ell \ge 1\) consider the diagram:

\newarrow{Fromshto} {}{dash}{}{dash}>

\begin{diagram}
  {J^{A^{[\ell]}}(\lR^n,\lR^m)}& &\rFromshto^{\Phi_t^{[\ell]}} & &{J^{A^{[\ell]}}(\lR^n,\lR^m)} \\ %
  \dTo & & & & \dTo\\
  J^A(\lR^n,\lR^m) & &\rTo^{\Phi_t}& &J^A(\lR^n,\lR^m) \\ %
  \dTo & & & & \dTo \\
  \lR^n & & & & \lR^n
\end{diagram}

For each \( P  \in J^{A^{[\ell]}} (\lR^n ,\lR^m)\) chose a map \(f\)  such that
\(\tau^{ A^{[\ell]}} f (x)=P\). We have  \(\ds \Phi_t^{[\ell]}(P)=\tau^{ A^{[\ell]}} f_t (\Xi_{it}\circ\tau^Af(x))\)
where  \(f_t\) is such that   \(\Gamma^Af_t  =  \phi_t(\Gamma^Af)\) defining  \(\Phi_t^{[\ell]}\)  called the \emph{lifting} of \(\Phi_t\).

We know that \(\Phi_t^{[\ell]}\)
is well defined, for all \(\ell > 1\),
and is  the flow of a \LB\  field 
\(\cX^{[\ell]}\) in \(J^{A^{[\ell]}}\)
which is called a lifting of \(\cX\).

We also write
\[\Phi_t^{[\ell]}=(\Xi_{it},H^\alpha_{Lt}),\,  i = 1,2, \dots, n \hbox{\ and\ } (L,\alpha) \in A^{[\ell]}\] 	
without distinguishing the notation used for its components from those used for the components of  \(\Phi_t\).
This is justified as they in fact coincide for all \(i = 1,2, \dots ,n\) and all
\((L,\alpha) \in A\).

We  now construct a lifting of \(\cX\) and of  \(\Phi_t\) to \(J^\infty (\lR^n ,\lR^m)\). These will be called \(\cX^\infty\), with components \((\xi_i,\eta^\alpha_L)\),
and \(\Phi_t^\infty\) with components \((\Xi_{it}, H^\alpha_{Lt})\)  respectfully.

By construction  \[\Phi_t^\infty(\tau^\infty f(x))=\tau^\infty f_t(\Xi_{it}\circ \tau^\infty f(x))\] where
\[\ds f_{Lt}(\Xi_t\circ \tau^Af(x))=H^0_{Lt}\circ \tau^Af(x).\]

Applying  \(\ds \pd{}{x_i}\) to both sides of
\[\pd{{}^\alpha f_{Lt}}{x^\alpha}\left(\Xi_t\circ \tau^\infty f\right)=H^\alpha_{Lt}\circ\tau^\infty f\]
 we get:
\[\sum_j\pd{}{x_j}\partial^\alpha f_{Lt}(\Xi_t\circ \tau^\infty f)\pd{}{x_i}(\Xi_{jt}\circ \tau^\infty f)=
\pd{}{x_i}(H^\alpha_{Lt}\circ\tau^\infty f) =\]
\[=\sum_j\partial^{\alpha+j}f_{Lt}(\Xi_t\circ \tau^\infty f)\,\lD^i\Xi_{jt}\circ \tau^\infty f=\lD^i H^\alpha_{Lt}\circ\tau^\infty f,\]
that is,
\[\sum_j(\lD^i\Xi_{jt}\circ \tau^\infty f)(H^{\alpha+j}_{Lt}\circ\tau^\infty f)=\lD^i H^\alpha_{Lt}\circ\tau^\infty f.\]
Thus, the coordinates of  \(\Phi^\infty_t\) are related by the following
equations \setcounter{equation}{1}
\begin{equation}\label{eqo2}
\sum_j\lD^i\Xi_{jt}\,H^{\alpha+j}_{Lt}=\lD^iH^\alpha_{Lt}
\end{equation}

for all \(i=1,2,\dots,n\) and for all \((L,\alpha)\).

As \(\Xi_{it}\) are components of a flow, the matrix \(\lD^i\Xi_{jt}\) is invertible.
To see this concretely, the map \(x_i\mapsto \Xi_{it}\circ \tau^\infty(f)\) is an invertible map from the domain of \(f\) to that of \(f_t\) and hence, using (\ref{dD}), the matrix \(\partial^j(\Xi_{it}\circ\tau^\infty(f)=(\lD^j\Xi_{it})\circ\tau^\infty(f)\) is invertible. By Borel's lemma we can choose the function \(f\) so that at any point \(x_0\),  \(\tau^\infty(f)(x_0)\) matches any give coordinates \(\ya_L\) in \(J^\infty\) hence the conclusion follows.

We now can use  (\ref{eqo2})  to calculate
\(H^\beta_{Mt}\) for any \((M, \beta)\) starting with \(\Xi_{jt}\) and \(H^0_{Mt}\) as follows:
let \(M^j_i\)
be the inverse matrix to  \(\lD^i\Xi_{jt}\), then from  (\ref{eqo2})  we have
\[H^{\alpha+j}_{Mt} =\sum_i M^j_i\lD^i H^\alpha_{Mt}.\]

Iterating this process we have \(H^\beta_{Mt}=F^\beta_M(\Xi_{jt},H^0_{Mt})\)
where \(F^\beta_M\) is a certain differential operator.

 Expanding \(\Phi_t\)  to first order and using (\ref{eqo2}) we get:
 \begin{equation}\label{eqo3}
 \eta^{\alpha+k}_L=\lD^k\eta^\alpha_L-\sum_iy^{\alpha+i}_L\lD^k\xi_i
\end{equation}
for each unitary multiindex  \(k\)
and all   \((L,\alpha)\),  where \(\xi_i\) and  \(\eta^\alpha_L\) are, respectively, the
 coefficients of \(\ds \pd{}{x_i}\) and of  \(\ds \pd{}{\ya_L}\) in the expression of  \(\cX^\infty\).
 Note that the components of the field are wholly determined by the components
 \(\xi_i\) and \(\eta^0_L\)
which we call the\emph{ principal components}.

\begin{definition}
A vector field  \(\cX\) in \(\Je\infty\)
which satisfies (\ref{eqo3}) is called a \emph{ \LB\ }field.

If \(\cX\) is also exponentiable  (Definition \ref{defo1}), it will be called
an \emph{exponentiable } \LB\ field.
\end{definition}

Given a \LB\ field \(\cX\), then at the point \(x+t\xi\circ \tau^\infty f(x)\),
the function \(J^\infty f_t\) has, components given by
\[\left(J^\infty f_t\right)_{(L,\alpha)}(x+t\xi\circ \tau^\infty f(x))=\pd{{}^\alpha f_L}{x^\alpha}(x)+t\eta^\alpha_L\circ \tau^\infty f(x)+ o(t).\]
Now the transformation
\(x \mapsto x+t\xi\circ \tau^\infty f(x)\) has, to first order, the inverse \(x \mapsto x-t\xi\circ \tau^\infty f(x)\).
Substituting this  in the expression for  \(J^\infty f_t\) we find that at  \(x\)
it has the following components:
\[\pd{{}^\alpha f_L}{x^\alpha}(x)+t\eta^\alpha_L\circ \tau^\infty f(x)-t\sum_i\xi_i\circ\tau^\infty f(x) \pd{{}^{\alpha+i}f_L}{x^{\alpha+i}}(x) +o(t).\]
Introduce now the functions
\begin{equation}\label{eps}
  \epsilon^\alpha_L=\eta^\alpha_L-\sum_i y^{\alpha+i}_L\xi_i.
\end{equation}
Using this allows us to write
\begin{equation}\label{defeps}
\left(J^\infty f_t(x)\right)_{(L,\alpha)}=\pd{{}^\alpha f_L}{x^\alpha}(x)+t\epsilon^\alpha_L\circ \tau^\infty f(x)+ o(t),
\end{equation}
exhibiting the meaning of the functions \( \epsilon^\alpha_L\) as the change of the components of the function \(J^\infty f_t\) at
a \emph{fixed} point as opposed to \( \eta^\alpha_L\) which is the change of the components at a \emph{displaced} point.

In terms of  \(\epsilon^\alpha_L\) equations  (\ref{eqo3}) become
\begin{equation}\label{epspi}
 \epsilon^{\alpha+i}_L=\lD^i\epsilon^\alpha_L
\end{equation}
and the vector  field    \(\cX\) becomes
\[\cX=\sum_{i=1}^n\xi_i\lD^i+\sum_{(L,\alpha)}\epsilon^\alpha_L\pd{}{\ya_L}\]

We also have that
\begin{equation}\label{dxc}
[\lD^i, \cX]=\sum_j(\lD^i\xi_j)\lD^j
\end{equation}

Suppose that the flow \(\Phi_t\) of the field \(\cX\), in \(E\times F\), has the
following property: there is a map \(\Phi^\flat_t:E\to E\) such that the diagram below
commutes:
\begin{diagram}
E\times F& \rTo^{\Phi_t}& E\times F \\
\dTo_\pi && \dTo_\pi\\
E& \rTo^{\Phi_t^\flat} & E
\end{diagram}
where \(\Phi^\flat_t\) is the flow of a field \(\cX ^\flat\) in \(E\).

Note that \(\Phi_t\) is a lifting of \(\Phi^\flat_t\) but
we reserve the term ``lifting" to the particular type defined earlier, so we say in
this case that  \(\Phi_t\) and \(\cX\) are respectively extensions of
\(\Phi^\flat_t\) and \(\cX^\flat\) to \(E \times F\).

Particularly, in jet bundles, suppose  \(A \subset B\).
Then \(J^B = J^A \times F\) for some \(F\) and let \(\pi_A: J^B \to  J^A\) be the projection.

  If \(\cX\) is a \LB\ field (exponentiable, in case \(|B|=\infty\))
in \(J^B\) and there exists \(\cX^\flat\) in \(J^A\) such that \(\cX\) is an extension of  \(\cX^\flat\)
then  \(\cX^\flat\) is a \LB\ field (exponentiable, in case \(|A|=\infty\)),
seeing that
\[\Phi^\flat_t\circ \tau^Af=\Phi^\flat_t \circ\pi_A \tau^Bf = \pi_A(\Phi_t\circ \tau^Bf) =\]
\[=\pi_A(\tau^B\circ f_t\circ\Psi_t^B) = \tau^Af_t\circ \Psi_t^A\]
We say that  \(\cX\) defined on \(\Je A\) and  \(\cY\) defined on \(\Je B\)
are \emph{equivalent} in case they induce the same flow on germs.

We now cite two results that will be used in the first part of this paper and which
correspond to theorem 2 and 3 of \cite{otte-svet:JDE36.270}.

\begin{theorem}\label{th:os1}
Let  \( A\) be a finite set of  multipairs. Let \(\cX\)
be an \LB\  field in \(\Je A\). There exists a partition
\(P_1, \dots, P_q\) of \(\{1,2, ... ,m\}\) and a sequence of
 \LB\ field \(\cY_1,\dots,\cY_q\) such that:
\begin{enumerate}
\item \label{th:0s1i} \(\cY_1\) is field in \(J^{A\cap \{(L,\alpha)\,|\, L\in P_1, \na \le 1\}}\)
and for each \(j > 1\), \(\cY_j\) and is an extension of an appropriate lifting of order
\(\ell_{j-1}\) of \(\cY_{j-1}\), that is,  \(\cY_j\) is a field in
\[J^{A_j}(\lR^n,\lR^{Q_j})=J^{A^{[\ell_{j-1}]}_{j-1}}(\lR^n, \lR^{Q_{j-1}}) \times \lR^{A\cap \{(L,\alpha)\,|\,L\in P_j, \na \le 1\}}\]
where \(Q_j = P_1\cup \dots\cup P_j\) and \(A_j\) is recursively defined by this formula.
\item \label{th:os1ii}\(\cY_q\) and \(\cX\) are equivalent.
\item\label{th:os1iii} Each one of the functions \(\epsilon^0_L\) determines the components   \(\xi_i\) of
\(\cX\) by \linebreak \(\ds \xi_i=-\pd{\epsilon^0_L}{y^i_L}\).
\vskip12pt
Consider \(\{1, 2, \dots,m\}\) the a set of vertices. Whenever
\(\ds \pd{\epsilon^0_L}{{\cY}^\alpha_K}\neq 0\) for \(K\neq L\)
construct an arrow labeled by \(\alpha\) going from \(K\) to  \(L\):
\(K \stackrel{\alpha}\rightarrow L\).
\item\label{th:os1iv}In each cycle \(L_0 \stackrel{\alpha_1}{\rightarrow} L_1 \stackrel{\alpha_2}{\rightarrow} L_2 \stackrel{\alpha_3}{\rightarrow} \cdots \stackrel{\alpha_n}{\rightarrow} L_n=L_0\)
contained in the graph constructed above, one necessarily has \(\alpha_i = 0\), for all \(i\).
\end{enumerate}
\end{theorem}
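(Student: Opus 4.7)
My plan is to prove (3) and (4) first as algebraic consequences of the LB recursion \(\epsilon^{\alpha+i}_L=\lD^i\epsilon^\alpha_L\) (equation (\ref{epspi})) combined with the hypothesis that \(A\) is \emph{finite}, and then to read off the existence of the partition \(P_1,\dots,P_q\) and the sequence \(\cY_1,\dots,\cY_q\) from the structure of the resulting dependency graph.

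The key technical step is (4), which I would establish by contradiction. Suppose some cycle \(L_0\stackrel{\alpha_1}{\to}L_1\stackrel{\alpha_2}{\to}\cdots\stackrel{\alpha_n}{\to}L_n=L_0\) contains an edge with \(|\alpha_j|\geq 1\); by pruning I may assume all \(|\alpha_j|\geq 1\). Each edge records \(\pd{\epsilon^0_{L_j}}{y^{\alpha_j}_{L_{j-1}}}\neq 0\), and a direct inspection of \(\lD^k\) shows that if a function on \(J^A\) depends nontrivially on \(y^\gamma_K\), then \(\lD^k\) of it depends nontrivially on \(y^{\gamma+k}_K\) (the leading term is forced to survive by the algebraic independence of the jet coordinates). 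Tracing the LB recursion around the cycle therefore produces, from \(\epsilon^0_{L_0}\), a dependence of \(\epsilon^\sigma_{L_0}\) on \(y^{2\sigma}_{L_0}\) where \(\sigma=\alpha_1+\cdots+\alpha_n\); iterating yields \((L_0,k\sigma)\in A\) for every \(k\geq 0\), contradicting finiteness. Part (3) falls out of the same mechanism: from \(\epsilon^0_L=\eta^0_L-\sum_j y^j_L\xi_j\) one computes \(\pd{\epsilon^0_L}{y^i_L}=\pd{\eta^0_L}{y^i_L}-\xi_i-\sum_j y^j_L\pd{\xi_j}{y^i_L}\), and the cascade argument forces the terms \(\pd{\eta^0_L}{y^i_L}\) and \(\pd{\xi_j}{y^i_L}\) to vanish, leaving \(\xi_i=-\pd{\epsilon^0_L}{y^i_L}\), which is the same for every \(L\).

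With (4) in hand the dependency graph condenses to a DAG whose nodes are its strongly connected components; inside each component all edges carry label \(0\) by (4). List these components in topological order as \(P_1,\dots,P_q\), with \(P_1\) a source. The field \(\cY_1\) is obtained from \(\cX\) by keeping the coordinates and principal components indexed by \(L\in P_1\) with \(\na\leq 1\): since the only cross-\(L\) dependencies within \(P_1\) use \(y^0_K\), this yields a \LB\ field on \(J^{A\cap\{(L,\alpha)\mid L\in P_1,\,\na\leq 1\}}\). For \(j>1\), \(\cY_j\) is assembled by lifting \(\cY_{j-1}\) to the order \(\ell_{j-1}\) dictated by the highest multiindex through which some \(\epsilon^0_L\), \(L\in P_j\), depends on a variable indexed by \(Q_{j-1}\), and then extending by the first-order coordinates attached to \(P_j\). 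Part (2) follows because by construction \(\cY_q\) has the same principal components \((\xi_i,\epsilon^0_L)\) as \(\cX\), and hence induces the same flow on germs. The main obstacle is the no-cancellation step inside the cascade used for (4): one has to verify that iterating around the cycle really does strictly increase the total order of some coordinate in \(A\), which is where the algebraic independence of the jet coordinates enters essentially; once this is secured the rest of the proof is organizational.
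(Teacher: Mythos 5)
You should know at the outset that the paper offers no proof of this statement: Theorem \ref{th:os1} is quoted as Theorems 2 and 3 of Otterson and Svetlichny \cite{otte-svet:JDE36.270}, so your argument has to stand on its own. Its architecture (force (iii) and (iv) from finiteness of \(A\) via an order-raising cascade, then condense the dependency graph into strongly connected components in topological order and assemble \(\cY_1,\dots,\cY_q\) by lifting and extension) is the right shape, but two steps are genuinely broken. The derivation of (iii) rests on a false claim: you assert the cascade forces \(\pd{\eta^0_L}{y^i_L}=0\) and \(\pd{\xi_j}{y^i_L}=0\). Ordinary contact fields already contradict this: for \(m=1\), \(\epsilon^0=\tfrac12 (y^1)^2\) gives an LB field on \(J^1(\lR,\lR)\) with \(\xi=-y^1\) and \(\eta^0=-\tfrac12(y^1)^2\), so both derivatives are nonzero; and for \(m\ge 2\) the data \(\epsilon^0_1=\Phi(x,y^0_1,y^1_1)\), \(\epsilon^0_2=y^1_2\pd{\Phi}{y^1_1}+\psi(x,y^0_1,y^0_2,y^1_1)\) satisfies Theorem \ref{th:os2} while \(\pd{\xi}{y^1_1}\neq 0\). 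What finiteness actually forces is only the combination \(\pd{\eta^0_L}{y^i_L}-\sum_j y^j_L\pd{\xi_j}{y^i_L}=0\), i.e.\ \(\xi_i=-\pd{\epsilon^0_L}{y^i_L}\); the standard route is to take a top-order multipair \((L,\alpha)\in A\) and demand that the coefficient of the out-of-\(A\) variable \(y^{\alpha+i}_L\) in \(\eta^\alpha_L=\lD^\alpha\epsilon^0_L+\sum_i y^{\alpha+i}_L\xi_i\) vanish. The same top-order argument is what kills \(\pd{\epsilon^0_L}{\yb_L}\) for \(\nb\ge 2\), which you never address but silently need for \(\cY_1\) to live on the order-\(\le 1\) bundle in item (1).

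Second, the no-cancellation step in the cascade for (iv), which you yourself flag as the main obstacle, is not a routine verification: the statement ``if a function on \(J^A\) depends on \(\yg_K\) then \(\lD^k\) of it depends on \(y^{\gamma+k}_K\)'' is false as stated. For \(n=m=1\), \(\Phi=y^0y^3-y^1y^2\) depends on \(y^2\), yet \(\lD\Phi=y^0y^4-(y^2)^2\) does not depend on \(y^3\). The leading term survives only after choosing \(\gamma\) maximal (e.g.\ anti-lexicographically, exactly the device the paper uses in the proof of Theorem \ref{th1}) among the \(K\)-variables occurring, and in the iteration around the cycle one must additionally check that neither the \(\sum_i y^{\alpha+i}_L\xi_i\) terms nor contributions from other edges can cancel the leading term once the order climbs past \(|A|\) (this is where finiteness of \(A\) is used twice: to bound what \(\xi_i\) can depend on, and to produce the final contradiction \((L_0,k\sigma)\in A\) for all \(k\)). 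Without this bookkeeping the contradiction is not established. Also, the ``pruning'' to all \(|\alpha_j|\ge 1\) is not available, since deleting zero-labelled edges destroys the cycle; it is also unnecessary, because all the argument needs is \(\sigma=\alpha_1+\cdots+\alpha_n\neq 0\). With (iii), (iv) and the vanishing of \(\pd{\epsilon^0_L}{\yb_L}\) for \(\nb\ge 2\) properly established, your organizational part (sources of the condensation first, lifting orders \(\ell_{j-1}\) chosen to accommodate the labels of edges entering \(P_j\), equivalence via equality of principal components) is sound in outline, though it still needs the check that each \(\cY_j\) is itself LB on the stated bundle.
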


Reciprocally we have:

\begin{theorem}\label{th:os2}
Given a set of functions \(\Phi_L\), \(L = 1,2,\dots ,m\) in \(\Je \infty\)
such that
\begin{enumerate}
  \item\label{th:os2i}\(\ds \pd{\Phi_L}{y^i_L}\) do not depend on \(L\).
  \item\label{th:os2ii} \(\ds \pd{\Phi_L}{\ya_L}=0\) if \(\na>1\).
  \item\label{th:os2iii}If  \(L_0 \stackrel{\alpha_1}{\rightarrow} L_1 \stackrel{\alpha_2}{\rightarrow} L_2 \stackrel{\alpha_3}{\rightarrow} \cdots \stackrel{\alpha_n}{\rightarrow} L_n=L_0\) is a cycle as described above, then, \(\alpha_i = 0\).
\end{enumerate}
Then, defining
\[\ds \xi_i =-\pd{\Phi_L}{y^i_L},\quad \epsilon^\alpha_L=\lD^\alpha \Phi_L,\quad \eta^\alpha_L= \epsilon^\alpha_L +\sum_iy^{\alpha+i}_L\xi_i\]
there is a finite set  \(A\) of multipairs such that
\[\cX=\sum_{i=1}^n\xi_i\pd{}{x_i} +\sum_{(L,\alpha)\in A}\eta^\alpha_L\pd{}{\ya_L}\]
defines a \LB\ field in \(\Je A\).
\end{theorem}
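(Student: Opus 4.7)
The plan splits the argument into a formal/algebraic part (verifying the \LB\ equations from the given formulas) and a structural/combinatorial part (exhibiting the finite set $A$).

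For the formal part, hypothesis (\ref{th:os2i}) makes $\xi_i=-\partial\Phi_L/\partial y^i_L$ well-defined independently of $L$. Setting $\epsilon^\alpha_L:=\lD^\alpha\Phi_L$, equation (\ref{epspi}), namely $\epsilon^{\alpha+i}_L=\lD^i\epsilon^\alpha_L$, is immediate from $[\lD^i,\lD^j]=0$, and translating back via $\eta^\alpha_L=\epsilon^\alpha_L+\sum_i y^{\alpha+i}_L\xi_i$ gives (\ref{eqo3}). A crucial by-product of hypotheses (\ref{th:os2i}) and (\ref{th:os2ii}) is a systematic cancellation: in $\lD^\alpha\Phi_L$ the $y_L$-terms of maximal order $\na+1$ come only from differentiating the first-order $y^j_L$-dependence of $\Phi_L$ (all higher-order $y_L$-derivatives of $\Phi_L$ vanish by (\ref{th:os2ii})) and their coefficient is $-\xi_j$ by (\ref{th:os2i}); the sum $\sum_j y^{\alpha+j}_L\xi_j$ added to form $\eta^\alpha_L$ therefore exactly cancels them, so $\eta^\alpha_L$ has $y_L$-dependence of order at most $\max(\na,1)$.

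For the structural part I would mirror the decomposition of Theorem \ref{th:os1}. Construct the graph on $\{1,\dots,m\}$ with arrows $K\stackrel{\alpha}{\rightarrow}L$ whenever $\partial\Phi_L/\partial\ya_K\ne 0$ and $K\ne L$. By hypothesis (\ref{th:os2iii}) every cycle consists of $\alpha=0$ edges, so the strongly connected components $P_1,\dots,P_q$ partition $\{1,\dots,m\}$ and may be topologically ordered so that all arrows with $\alpha\ne 0$ run between distinct classes. I would then inductively build finite sets of multipairs $A_j$ and \LB\ fields $\cY_j$ on $J^{A_j}(\lR^n,\lR^{Q_j})$, with $Q_j=P_1\cup\cdots\cup P_j$: begin with $A_1=\{(L,\alpha)\mid L\in P_1,\ \na\le 1\}$, where the cancellation above together with the $\alpha=0$ restriction on intra-$P_1$ arrows keeps all coefficients of $\cY_1$ inside $J^{A_1}$; at stage $j$, let $\ell_{j-1}$ be the maximum $\na$ over arrows $K\stackrel{\alpha}{\rightarrow}L$ with $K\in Q_{j-1}$ and $L\in P_j$, lift $\cY_{j-1}$ to order $\ell_{j-1}+1$ (to accommodate the extra derivative picked up by $\lD$), and extend by the new coordinates $\{(L,\alpha)\mid L\in P_j,\ \na\le 1\}$. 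Each $\ell_{j-1}$ is finite because each $\Phi_L$ involves only finitely many variables, so $A:=A_q$ is finite.

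The main obstacle is the bookkeeping in the inductive step: at each stage $j$ one must verify that the components $\eta^\alpha_L$ of $\cY_j$ with $L\in P_j$, $\na\le 1$, really have all their variable dependencies inside $A_j$, and that the inductively built $\cY_q$ coincides with the vector field $\cX$ displayed in the theorem statement. The cycle condition (\ref{th:os2iii}) is indispensable here, for without it an intra-component arrow with $\alpha\ne 0$ would cause $y_K$-dependencies (for $K$ in the same class) to cascade indefinitely through successive liftings and prevent $A$ from being finite. Once this bookkeeping is complete, the algebraic part guarantees that the \LB\ equations hold on $J^{A_q}$, finishing the proof.
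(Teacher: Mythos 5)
This statement is one of the two results the paper quotes without proof from Otterson and Svetlichny \cite{otte-svet:JDE36.270} (Theorems 2 and 3 there), so there is no in-paper argument to compare you with; I can only measure your proposal against what the statement requires. Your formal part is sound: with \(\epsilon^\alpha_L=\lD^\alpha\Phi_L\) the relations (\ref{epspi}), hence (\ref{eqo3}), are automatic, and the cancellation of the highest-order \(y_L\)-variables in \(\eta^\alpha_L\) coming from hypotheses (\ref{th:os2i}) and (\ref{th:os2ii}) is correctly identified. Your structural part (condense the dependency graph, order the components, observe that orders grow only along inter-component arrows with \(\alpha\neq0\), and use the finite-variable convention to bound everything) is a plausible route to a finite \(A\), although you leave the inductive bookkeeping admittedly unfinished, and you would also need the components ordered so that each \(\Phi_L\) with \(L\in P_j\) depends only on \(y_K\) with \(K\in Q_j\) — that, not merely "arrows with \(\alpha\neq0\) run between distinct classes," is what makes \(\cY_1\) and the successive extensions well defined.

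The genuine gap is your closing claim that "the algebraic part guarantees that the \LB\ equations hold on \(J^{A_q}\), finishing the proof." The theorem asserts that \(\cX\) is a \LB\ field in \(\Je A\) with \(A\) \emph{finite}, and for finite \(A\) the paper's notion of \LB\ field is Definition \ref{LB}: the flow must carry germs of graphs \(\Gamma^A f\) onto germs of graphs \(\Gamma^A f_t\). The system (\ref{eqo3}) was obtained in Section \ref{sec:prel} only as the first-order consequence of that property; the converse — that a field on the finite-dimensional bundle whose coefficients are given by the prolongation formulas integrates to a graph-preserving flow — is precisely the substantive content of the cited theorem, and your proposal never addresses it (you in effect use the \(J^\infty\) definition of \LB, which is not what the conclusion says). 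To close the gap one must take the flow \(\Phi_t=(\Xi_t,H^\alpha_{Lt})\) of the constructed field, define \(f_t\) by \(H^0_{Lt}\circ\tau^Af\circ\Psi_t^{-1}\) with \(\Psi_t=\Xi_t\circ\tau^Af\) invertible for small \(t\), and prove that the remaining components of the flow agree with the corresponding derivatives of \(f_t\), i.e. that the integrated relations (\ref{eqo2}) hold along the flow; the natural device, used elsewhere in the paper, is to form the defects \(\Delta^{\alpha i}_{Lt}=\sum_j\lD^i\Xi_{jt}\,H^{\alpha+j}_{Lt}-\lD^iH^\alpha_{Lt}\), show via (\ref{dxc}) that they satisfy a linear system of the kind treated in Lemma \ref{lemo5} with vanishing initial data (compare the Proposition of Section \ref{sec:segu} and Theorem \ref{th4}), and conclude they vanish. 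Without this step, or an equivalent one, the conclusion of the theorem is not reached.
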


Note that for \(m = 1\) only condition
(\ref{th:os2ii}) is restrictive, thus any function
\(\Phi(x_1,\dots,x_n,y^0,y^1,\dots,y^m)\)
defines a \LB\ field in \(J^\infty(\lR^n,\lR)\).

\vfill\eject
\section{\LB\ field in \protect\(J^\infty(\lR^n,\lR)\protect\)}\label{sec:prim} 
\vskip 20pt

This part deals with some technical lemmas, being that Lemma  \ref{lem1}  is a consequence
of Lemma 4 of \cite{otte-svet:JDE36.270}. In these lemmas, \(R\) will indicate any
expression  for which there is no interest in making it explicit.

Let \(\cX\) be a vector field with flow \(\Phi_t=(\Xi_{it}, H^\alpha_{Lt})\)
such that each of its components \(\Xi_{it}\), \(H^\alpha_{Lt}\)  depend on a finite
number of variables.

Given that
\[\cX^k(g)=\left.\frac1{k!}\frac{d^k}{dt^k}\left(g\circ \Phi_t\right)\right|_{t=0},\]
\(g\circ \Phi_t\)
depends on a finite number of variables,  and derivatives in \(t\)
cannot introduce
new variables. A necessary condition for
\(\cX\) to have the above property is given by the following definition:

\begin{definition}\label{condF}
A vector field \(\cX\) in \(J^\infty\) \emph{satisfies
condition \(F\)} when for each coordinate function
 \(g = x_1,\dots \ya_L\)  etc.,
there is a finite set \(F_g\) of variables
such that for all \(k\), \(\cX^k(g)\) does not depend on variables that are not in \(F_g\). We denote by \(|F|_g\) the
maximum of \(|\alpha|\) such that \(y^\alpha_L\in F_g\).
\end{definition}
Note that \(F\) depends on the coordinate function \(g\) and is not uniquely specified.

We show below that a \LB\ field in \(J^\infty(\lR^n,\lR)\)
satisfying condition \(F\) is a lifting of  a \LB\ field in
\(J^1(\lR^n,\lR)\). We begin with a few lemmas.

\begin{lemma}\label{lem1}
If \(\Phi\) is  such that for  \(\nga>k\) one has \(\ds \pd{\Phi}{\yg}=0\),
then
\[\lD^\alpha \Phi=\sum_{\nga=k}y^{\gamma+\alpha}\pd{\Phi}{\yg}+R,\]
where \(R\) depends only on \(\yb\) if \(\nb<k+ \na\).
\end{lemma}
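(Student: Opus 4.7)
The plan is to prove this by induction on $\na$, using the definition of the total derivative $\lD^i = \pd{}{x_i} + \sum_\sigma y^{\sigma+i}\pd{}{\ys}$ and the fact that $\lD^\alpha = (\lD^1)^{\alpha_1}\cdots(\lD^n)^{\alpha_n}$. Throughout the argument, the key bookkeeping is to track which multiindices $\beta$ can appear as $\yb$-dependencies of each piece.

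For the base case $\na = 1$, with $\alpha = i$ a unit multiindex, the formula for $\lD^i$ gives directly
\[\lD^i\Phi = \pd{\Phi}{x_i} + \sum_{\nb \le k} y^{\beta+i}\pd{\Phi}{\yb}
= \sum_{\nb=k} y^{\beta+i}\pd{\Phi}{\yb} + R,\]
where the first term isolates the maximal-degree contribution (since $\pd{\Phi}{\yb}=0$ for $\nb > k$ by hypothesis) and $R$ collects $\pd{\Phi}{x_i}$ together with $y^{\beta+i}\pd{\Phi}{\yb}$ for $\nb < k$, all of whose $y$-dependencies have degree at most $k < k + 1$.

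For the inductive step, assume the result for $\alpha$ and apply $\lD^j$, where $j$ is a unit multiindex. By the Leibniz rule,
\[\lD^{\alpha+j}\Phi = \sum_{\nga=k}\bigl[\lD^j(\yg^{+\alpha})\bigr]\pd{\Phi}{\yg} + \sum_{\nga=k} y^{\gamma+\alpha}\,\lD^j\!\left(\pd{\Phi}{\yg}\right) + \lD^j R.\]
The first sum yields $\sum_{\nga=k} y^{\gamma+\alpha+j}\pd{\Phi}{\yg}$, which is the claimed main term for $\alpha+j$. For the second sum, $\pd{\Phi}{\yg}$ still satisfies the hypothesis of the lemma (its partials in $\yb$ with $\nb > k$ vanish), so by the $\na=1$ case $\lD^j(\pd{\Phi}{\yg})$ depends only on $\yb$ with $\nb \le k$; multiplying by $y^{\gamma+\alpha}$ (of degree $k+\na$) gives a quantity depending on variables of degree at most $k+\na < k+\na +1$. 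For the third term, by inductive hypothesis $R$ depends only on $\yb$ with $\nb < k + \na$, so $\lD^j R$ introduces at worst variables of degree $k+\na$, still strictly below $k + \na + 1$.

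The only genuine subtlety, and the point I would emphasize most carefully, is ensuring the main term $\sum_{\nga=k} y^{\gamma+\alpha+j}\pd{\Phi}{\yg}$ is not accidentally contaminated by lower-order contributions: this amounts to checking that no other term in the Leibniz expansion produces a variable $\yb$ of degree exactly $k + \nb(\alpha+j)$ multiplied by $\pd{\Phi}{\yg}$ with $\nga = k$. This is where the strict inequality in the statement becomes important, and it follows because $\pd{\Phi}{\yg}$ is free of $y$-variables of degree exceeding $k$, so each application of a total derivative raises the maximum $y$-degree by exactly one. Hence, by induction, the decomposition holds for all multiindices $\alpha$.
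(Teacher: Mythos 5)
Your proof is correct and takes essentially the same route as the paper's: induction on \(\na\), with the base case read directly off the definition of \(\lD^i\) and the inductive step obtained by applying one more total derivative to the decomposition and tracking the maximal degree of the \(y\)-variables in each resulting piece. One minor slip that does not affect the conclusion: by the base case, \(\lD^j\bigl(\pd{\Phi}{\yg}\bigr)\) depends on \(\yb\) with \(\nb\le k+1\) (its leading term involves \(y^{\sigma+j}\) with \(|\sigma|=k\)), not \(\nb\le k\) as you state; your degree bound for the second sum survives because the factor \(y^{\gamma+\alpha}\) already has degree \(k+\na\ge k+1\).
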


\begin{proof}
We use induction on  \(\na\). For \(\na =1\) we have:
\[\lD^i\Phi = \pd{\Phi}{x_i}+\sum_{\nga\le k}y^{\gamma+i}\pd{\Phi}{\yg}=
\sum_{\nga= k}y^{\gamma+i}\pd{\Phi}{\yg}+R\]
where \(R\) depends only on \(\yb\) if \(\nb<k+ 1\).

Assume the assertion is true for \(\na\),
\[\lD^{\alpha+i}\Phi = \pd{}{x_i}(\lD^\alpha\Phi)+\sum_\beta y^{\beta+i}\pd{}{\yb}
\left(\sum_{\nga= k}y^{\gamma+\alpha}\pd{\Phi}{\yg}+R\right)=\]
\[=\sum_{\nga= k}y^{\gamma+\alpha+i}\pd{\Phi}{\yg}+R\]
where \(R\) depends only on \(\yb\)  if  \(\nb<k+ \na+1\).
\end{proof}

\begin{lemma}\label{lem2}
\ \newline

 Let
 \begin{eqnarray}
  \cX &=& \sum_i\xi_i\lD^i+\sum_\beta \lD^\beta\epsilon\pd{}{\yb}\\
  a &=& \max\left\{\na \,\left|\,\pd\epsilon{\ya} \neq 0\right.\right\} \\
   b &=& \max\left\{\na \,\left|\,\exists i  \pd{\xi_i}{\ya} \neq 0, 1\le i\le n\right.\right\}
 \end{eqnarray}

Let  \(\ds \Phi = \sum_{\nga=k} \yg\Psi_\gamma+\rho\) where  \(\Psi_\gamma\) and  \(\rho\)  only  depend on
\(\yb\)  if  \(\nb<k\).  Then, if  \(k > b\),  \(\cX^\ell\Phi\)  has the following form:

i)  if  \(a > 1\),
\[  \cX ^\ell\Phi  = \sum_{\nga=k}\sum_{|\beta_1|=a }\cdots \sum_{|\beta_\ell|=a }
y^{\gamma+\beta_1+\cdots+\beta_\ell}\pd{\epsilon}{y^{\beta_1}}\cdots \pd{\epsilon}{y^{\beta_\ell}}\Psi_\gamma +R\]
where \(R\) depends only on \(\yb\) if \(\nb<\ell a+k\).

ii) if \(a \le 1\), \[  \cX ^\ell\Phi  = \sum_{\nga=k}\sum_{j_1}\cdots  \sum_{j_\ell }
y^{\gamma+j_1+\cdots+j_\ell}\left(\xi_{j_1}+\pd{\epsilon}{y^{j_1}}\right)\cdots \left(\xi_{j_\ell}+\pd{\epsilon}{y^{j_\ell}}\right) \Psi_\gamma +R\]
where \(R\) depends on \(\yb\) only if \(\nb<\ell+k\).

\end{lemma}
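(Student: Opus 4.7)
The approach is induction on $\ell$, with Lemma~\ref{lem1} as the only real tool. The key observation is that the announced top-order expression in each case has the same structural form as the original $\Phi$: a sum of top-order monomials $y^\mu$ with coefficients depending only on $y^\nu$ of strictly smaller order, but with $k$ replaced by a new value $k'=k+\ell a$ in case (i), or $k'=k+\ell$ in case (ii). This lets the inductive step be reduced to the base case applied to $\cX^\ell\Phi$ viewed as a new ``$\Phi$'' with upper order $k'$.

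For the base case $\ell=1$, I would apply Lemma~\ref{lem1} twice: to $\Phi$, giving $\lD^i\Phi=\sum_{\nga=k}y^{\gamma+i}\Psi_\gamma + R$ with $R$ depending only on $\yb$ satisfying $\nb<k+1$; and to $\epsilon$, giving $\lD^\beta\epsilon = \sum_{|\delta|=a}y^{\delta+\beta}\pd{\epsilon}{y^\delta} + R_\beta$ with $R_\beta$ depending only on $y^\mu$ satisfying $|\mu|<a+\nb$. Substituting into $\cX\Phi=\sum_i\xi_i\lD^i\Phi + \sum_\beta \lD^\beta\epsilon\cdot\pd{\Phi}{\yb}$ and using $\pd{\Phi}{\yb}=\Psi_\beta$ for $\nb=k$ and $=0$ for $\nb>k$: in case (i) the $\xi_i$-piece has top $y$-order $k+1<k+a$ and is absorbed into the remainder, so only the $\lD^\beta\epsilon$ piece with $\nb=k$ survives at top order; in case (ii) the $\xi_i$-piece and the $\lD^\beta\epsilon$-piece share top order $k+1$ and combine into the factor $\xi_j+\pd{\epsilon}{y^j}$. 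The hypothesis $k>b$ is what ensures that the $\xi_i$'s only contribute low-order $y$'s to the coefficients of the top-order monomials.

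For the inductive step, write $\cX^\ell\Phi=\Phi^\sharp+R^\sharp$ with $\Phi^\sharp$ the announced explicit expression at level $\ell$. Its coefficients are products of $\pd{\epsilon}{y^{\beta_r}}$, possibly $\xi_{j_r}$, and $\Psi_\gamma$, hence depend only on $y^\nu$ with $|\nu|\le\max(a,b,k-1)<k'$; since $k'>k>b$, Lemma~\ref{lem1}'s hypothesis holds for $\Phi^\sharp$ with new upper order $k'$. Applying the base case to $\Phi^\sharp$ yields the level-$(\ell+1)$ formula, each fresh application of $\cX$ attaching one new factor $\pd{\epsilon}{y^{\beta_{\ell+1}}}$ (resp.\ $\xi_{j_{\ell+1}}+\pd{\epsilon}{y^{j_{\ell+1}}}$) to every existing product after re-indexing the sums. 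One must then check that $\cX R^\sharp$ lands in the new remainder: the term $\sum_i\xi_i\lD^i R^\sharp$ raises the $y$-order by at most one, and $\sum_\beta\lD^\beta\epsilon\cdot\pd{R^\sharp}{\yb}$ is supported on $\nb<k'$, so $\lD^\beta\epsilon$ there involves $y^\mu$ with $|\mu|\le a+k'-1$, both strictly below the new top order. The main obstacle is precisely this bookkeeping: verifying simultaneously that every surviving top-order coefficient continues to depend only on $y^\nu$ of strictly smaller order (so Lemma~\ref{lem1} keeps applying cleanly) and that every remaining piece has been absorbed into $R^\sharp$; the condition $k>b$ is invoked repeatedly to keep the $\xi_i$'s dependence on low-order $y$'s from contaminating the top-order part.
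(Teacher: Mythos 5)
Your proposal is correct and follows essentially the same route as the paper: induction on \(\ell\), with Lemma~\ref{lem1} applied to \(\lD^\beta\epsilon\) to extract the top-order monomials, the hypothesis \(k>b\) keeping the \(\xi_i\)-contributions in the remainder (case i) or merging them into the factor \(\xi_j+\pd{\epsilon}{y^j}\) (case ii). Your observation that \(\cX^\ell\Phi\) again satisfies the hypotheses of the lemma with \(k\) replaced by \(k+\ell a\) (resp.\ \(k+\ell\)), so the inductive step is just the base case reapplied, is exactly what the paper's induction does, only stated more explicitly.
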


\begin{proof}
  We prove by induction on  \(\ell\).
  \vskip 2ex

i) \(a  >  1\):

\vskip 2ex
\noindent If  \(\ell =  1\) we have
\begin{equation}\label{lem2xphi}
  \cX \Phi =\sum_{i=1}^n\xi_i\pd\Phi{x_i} +\sum_{i=1}^n \sum_{\nga\le k}\xi_iy^{\gamma+i}\pd\Phi{\yg} +
\sum_{\nga\le k}\lD^\gamma \epsilon\pd\Phi{\yg},
\end{equation}
where the first term depends only on     \(\yb\) if \(\nb\le k\),
the second depends only on \(\yb\)  if \(\nb\le k+1\), and the
third depends only on \(\yb\) such that \(\nb\le k+a\).

Considering these observations and Lemma \ref{lem1}  applied to \(\partial^\gamma\epsilon\), we get
\[ \cX \Phi = \sum_{|\gamma|=k}\sum_{\nb=a}y^{\gamma+\beta} \pd\epsilon{\yb}\Psi_\gamma+R\]
where \(R\) depends only on
\(\ya\) if \(\na<k+a\).

Suppose the statement is true for \(\ell\),
\[\cX ^{\ell + 1}\Phi= \sum_i\xi_i\lD^i(\cX ^\ell\Phi)+ \sum_\beta \lD^\beta \epsilon \pd{}{\yb}(\cX ^\ell\Phi).\]
By the induction hypothesis and  Lemma \ref{lem1} applied to \(\partial^\beta\epsilon\), we have
\[  \cX ^{\ell+1}\Phi  = \sum_{\nga=k}\sum_{|\beta_1|=a }\cdots \sum_{|\beta_{\ell+1}|=a }
y^{\gamma+\beta_1+\cdots+\beta_{\ell+1}}\pd{\epsilon}{y^{\beta_1}}\cdots \pd{\epsilon}{y^{\beta_{\ell+1}}}\Psi_\gamma +R\]
where  \(R\) depends only on
\(\ya\) if \(\na<k+(\ell+1)a\).
\vskip 2ex

ii) \(a \leq 1\):

\vskip 2ex
\noindent If \(\ell = 1\), consider equation \ref{lem2xphi} for  \(\cX\Phi\) that appears in
item i). In this case, however, the first term depends only
on \(\yb\) if \(\nb\le k\), but the second and third terms depend on
the \(\yb\) such that \(\nb\le k+1\). From this and Lemma \ref{lem1}  applied to \(\partial^\gamma\epsilon\), we get that
\[\cX \Phi = \sum_{\nga=k}\sum_i\left(\xi_i+\pd\epsilon{y^i}\right)y^{\gamma+i}\Psi_\gamma+R\]
where \(R\) depends only on \(\yb\) if \(\nb\le k+1\).

Suppose that the statement is true for \(\ell\), then again by induction and Lemma \ref{lem1}  applied to \(\partial^\beta\epsilon\)
we have
\[\cX ^{\ell+1}\Phi = \sum_i\xi_i\pd{(\cX ^\ell\Phi)}{x_i} +\sum_i\sum_\beta \xi_i y^{\beta+i}\pd{(\cX ^\ell\Phi)}{x_i}
+ \sum_\beta \lD^\beta\epsilon \pd{(\cX ^\ell\Phi)}{\yb}=\]
\[= \sum_{\nga=k}\sum_{j_1}\cdots  \sum_{j_{\ell+1} }
y^{\gamma+j_1+\cdots+j_{\ell+1}}\left(\xi_{j_1}+\pd{\epsilon}{y^{j_1}}\right)\cdots \left(\xi_{j_{\ell+1}}+\pd{\epsilon}{y^{j_{\ell+1}}}\right) \Psi_\gamma +R\]
where \(R\) depends only on \(\yb\) if \(\nb\le k+\ell+1\).
\end{proof}

\begin{theorem}\label{th1}
Let \(\cX\) be a \LB\ field in \(J^\infty(\lR^n, \lR)\) that satisfies  condition \(F\). Then \(\cX\) is a lifting of a \LB\ field
in \(J^1(\lR^n, \lR)\)
and is therefore exponentiable.
\end{theorem}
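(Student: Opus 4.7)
The plan is to show that the maximum-order variable appearing in the principal component $\epsilon$ of $\cX$ (in the notation of Lemma \ref{lem2}) has order at most $1$. Once that is established, $\cX$ is a lifting of a \LB\ field on the finite-dimensional bundle $J^1(\lR^n,\lR)$, and exponentiability follows because the flow of any smooth field on the finite-dimensional $J^1$ lifts to a flow on $J^\infty$ whose components each depend on finitely many variables.

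Condition $F$ applied to the coordinate $y^0$ shows that $\cX(y^0)=\epsilon$ depends on only finitely many variables, so the integers $a$ and $b$ from Lemma \ref{lem2} are finite, and $b\le a$ because $\xi_i=-\partial\epsilon/\partial y^i$ by Theorem \ref{th:os1}(\ref{th:os1iii}). Suppose for contradiction that $a>1$. Pick any integer $k>b$ and a multiindex $\gamma_0$ with $|\gamma_0|=k$, and take $\Phi=y^{\gamma_0}$, which fits the hypothesis of Lemma \ref{lem2} with $\Psi_{\gamma_0}=1$, every other $\Psi_\gamma=0$, and $\rho=0$. Case (i) of that lemma then yields, for every $\ell\ge1$,
\[
\cX^\ell(y^{\gamma_0})=\sum_{|\beta_1|=a}\cdots\sum_{|\beta_\ell|=a}y^{\gamma_0+\beta_1+\cdots+\beta_\ell}\prod_{j=1}^\ell\frac{\partial\epsilon}{\partial y^{\beta_j}}+R_\ell,
\]
where $R_\ell$ depends only on $\yb$ with $\nb<k+\ell a$.

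The main obstacle is ruling out large-scale cancellation in this sum, since a given multiindex $\gamma_0+\Sigma$ with $|\Sigma|=\ell a$ may be reached by several ordered tuples $(\beta_1,\dots,\beta_\ell)$ and one cannot simply point to a single nonzero term. The clean device is the auxiliary polynomial
\[
p(z)=\sum_{|\beta|=a}\frac{\partial\epsilon}{\partial y^\beta}\,z^\beta,
\]
whose coefficients are smooth functions on $J^\infty$. By the definition of $a$, there is a point $P$ at which some $\partial\epsilon/\partial y^{\beta^*}\ne0$, and $p|_P$ is then a nonzero element of $\lR[z_1,\dots,z_n]$. The displayed formula is precisely the expansion of $p(z)^\ell$: the coefficient of $y^{\gamma_0+\Sigma}$ in $\cX^\ell(y^{\gamma_0})$ equals the coefficient of $z^\Sigma$ in $p^\ell$, for every $\Sigma$ with $|\Sigma|=\ell a$. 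Because $\lR[z_1,\dots,z_n]$ is an integral domain, $p^\ell|_P\ne0$, so some such coefficient is nonzero at $P$ and therefore not identically zero as a function. Consequently $\cX^\ell(y^{\gamma_0})$ genuinely depends on a variable $y^{\gamma_0+\Sigma}$ of order $k+\ell a$, and letting $\ell\to\infty$ forces $F_{y^{\gamma_0}}$ to contain variables of unbounded order, violating condition $F$.

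Hence $a\le 1$, so $\epsilon$ and each $\xi_i$ depend only on the $J^1$-coordinates $x_i,y^0,y^1,\dots,y^n$. The principal components of $\cX$ thus define a \LB\ field $\cX^\flat$ on the finite-dimensional manifold $J^1(\lR^n,\lR)$, and $\cX$ is its lifting because all higher components are recovered from the principal ones by $\epsilon^{\alpha+i}=\lD^i\epsilon^\alpha$ (equation \ref{epspi}). The flow of $\cX^\flat$ exists locally as a finite-dimensional ODE, and its lifting to $J^\infty$ expresses each $H^\alpha_t$ by finitely many $\lD^i$ applications to the $J^1$-flow; since $\lD^i$ raises the order of any variable by at most one, each $H^\alpha_t$ is a function of finitely many $J^\infty$-variables, independent of $t$. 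This is precisely Definition \ref{defo1}, so $\cX$ is exponentiable.
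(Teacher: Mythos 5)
There is a genuine gap, and it sits at the very first step: you take \(\xi_i=-\pd{\epsilon}{y^i}\) as given, ``by Theorem \ref{th:os1}(\ref{th:os1iii})''. That theorem applies only to \LB\ fields on a \emph{finite-dimensional} bundle \(\Je A\), where the graph-preserving flow exists by hypothesis; your \(\cX\) lives on \(J^\infty(\lR^n,\lR)\) and is a \LB\ field only in the sense of the prolongation identities (\ref{eqo3})/(\ref{epspi}), which impose no relation whatsoever between \(\xi\) and \(\epsilon\). Indeed the total derivative field \(\lD^1\) satisfies (\ref{epspi}) with \(\epsilon=0\) and \(\xi=1\), so the relation can genuinely fail for \(J^\infty\) \LB\ fields; it is exactly the contact condition required by Theorem \ref{th:os2} to recognize \(\cX\) as a lifting of a field on a finite \(J^A\), and hence it is half of what Theorem \ref{th1} asserts. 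The paper derives it from condition \(F\) by a separate coefficient-vanishing argument: applying \(\cX^\ell\) to \(\Phi=\cX^2x_i\) (when some \(\pd{\xi_i}{\ya}\neq0\)) or to \(\Phi=\cX^2y^0\) (when \(\xi\) is independent of all \(\ya\)), and forcing the surviving coefficient \(\bigl(\xi_j+\pd{\epsilon}{y^j}\bigr)^{\ell+1}\pd{\xi_i}{y^{\bar\beta}}\) (resp.\ its analogue) to vanish, which yields \(\xi_j=-\pd{\epsilon}{y^j}\). Your proposal contains no substitute for this case analysis, and your subsidiary claim \(b\le a\) inherits the same unjustified premise (finiteness of \(b\) itself is fine, via condition \(F\) applied to \(x_i\), but the inequality is not needed and not established). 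Consequently, even granting \(a\le1\), you cannot conclude via Theorem \ref{th:os2} and Theorem \ref{th:os1} that \(\cX\) is a lifting of a \LB\ field in \(J^1(\lR^n,\lR)\).

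On the positive side, your argument that \(a\le1\) is correct and is a genuinely different, cleaner treatment of the cancellation issue than the paper's: instead of selecting the anti-lexicographically maximal \(\bar\alpha,\bar\beta\) so that a single product survives, you encode the top-order coefficients of \(\cX^\ell(y^{\gamma_0})\) as the coefficients of \(p(z)^\ell\) with \(p(z)=\sum_{\nb=a}\pd{\epsilon}{\yb}z^\beta\), evaluate at a point where \(p\neq0\), and use that \(\lR[z_1,\dots,z_n]\) is an integral domain; applying condition \(F\) to the coordinate \(y^{\gamma_0}\) itself (rather than to \(x_i\) or \(y^0\)) also shortens the setup. That device would be a nice simplification of the paper's case 1.1/2.1, but as written it only reproves one half of the theorem; to repair the proposal you must still establish \(\xi_i=-\pd{\epsilon}{y^i}\) from condition \(F\) (or otherwise), as in the paper's cases 1.2 and 2.2.
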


\begin{proof}

\noindent\textbf{ 1)} Suppose  there is an \(i\) such that   \(\ds \pd{\xi_i}{\ya}\neq 0\) for some \(\alpha\) and defines \(a\) and \(b\)
as in Lemma  \ref{lem2}.

 Take \(i\) such that \(\ds\pd{\xi_i}{\ya}\neq 0\) for some \(\alpha\), \(\na=b\).

 Consider
 \[\cX ^2 x_i=\cX \xi_i  =\sum_j\xi_j\pd{\xi_i}{x_j}+\sum_j\sum_{\na\le b}\xi_jy^{\alpha+j}\pd{\xi_i}{\ya}+\sum_{\na\le b}\lD^\alpha \epsilon\pd{\xi_i}{\ya}.\]

\noindent\textbf{ 1.1)} Assume \( a > 1\). In this case, taking into account the dependence of each term on \(\yb\)  we have that
\[\cX ^2 x_i=\sum_{\na =a}\sum_{\nb =b}y^{\alpha+\beta}\pd\epsilon{\ya}\pd{\xi_i}{\yb}+R\]
where \(R\) does not depend on \(\yg\) if \(\nga \ge a+b\). The first term of this equation is a term of the third sum of the previous equation.

Thus, we can write \(\ds \cX ^2 x_i=\sum_{\nga=a+b}y^{\gamma}\Psi_\gamma +R\)
where
\begin{equation}\label{psigamma}
  \Psi_\gamma = \sum_{\begin{array}{c} \alpha',\beta'\\ \alpha'+\beta'=\gamma \\ |\alpha'|=a\\ |\beta'|=b \end{array}}\pd\epsilon{y^{\alpha'}} \pd{\xi_i}{y^{\beta'}}
  \end{equation}
where \(R\) does not depend on \(\ya\) if \(\na \ge a+b\).

This  shows that \(\Phi=\cX ^2 x_i\) satisfies the conditions of Lemma \ref{lem2}, therefore:
\begin{equation}\label{xlphi}
\cX ^\ell\Phi  = \sum_{|\gamma|=a+b}\sum_{|\beta_1|=a }\cdots \sum_{|\beta_\ell|=a }
y^{\gamma+\beta_1+\cdots+\beta_\ell}\pd{\epsilon}{y^{\beta_1}}\cdots \pd{\epsilon}{y^{\beta_\ell}}\Psi_\gamma +R,
\end{equation}
where \(R\) does not depend on \(y^\beta\) if \(|\beta|>\ell a+a+b\).

Consider \(\ell\) such that \(\ell a+a+b >\max_i |F|_{x_i}\).

Then, the coefficient of each \(\ys\) for each \(\sigma\)
such that \(|\sigma| = \ell a+a+b\), in the expression of \(\cX^\ell\Phi\),
should be null, since \(\cX\) satisfies condition F.

We show now that the supposition \(a > 1\) contradicts the statement above.

For this, let's examine the cited coefficients.

Let
\begin{eqnarray}
A_1 &=& \left\{\alpha \,|\, \na=a \hbox{\ and\ }\pd\epsilon{\ya}\neq 0\right\} \\
A_2 &=& \left\{\beta \,|\, \nb=b \hbox{\ and\ }\exists i\,\pd{\xi_i}{\yb}\neq 0\right\}.
\end{eqnarray}

Consider

\begin{eqnarray}\nonumber
\bar\alpha_1 &=& \max  \{\alpha_1  \,|\,  (\alpha_1,\alpha_2,\dots,\alpha_n)\in  A_1\}\\
\bar\alpha_2 &=&\max  \{\alpha_2  \,|\,  (\bar\alpha_1,\alpha_2,\dots,\alpha_n)\in  A_1\} \\
  \vdots &\vdots&\vdots \\
\bar\alpha_n  &=&  \max  \{\alpha_n  \,|\,  (\bar\alpha_1,\bar\alpha_2,\dots,\bar\alpha_{n-1},\alpha_n)\in  A_1\}.
\end{eqnarray}
Let,  \(\bar\alpha=(\bar\alpha_1, \bar\alpha_2 ,\dots\bar\alpha_n)\), that is, \(\bar\alpha\) is the  largest  element
 of \(A_1\)  in  anti-lexicographic order.

Proceed analogously to find \(\bar\beta\) in \(A_2\)

For \(\bar \gamma = \bar\alpha + \bar\beta\), consider expression (\ref{psigamma}) for \(\Psi_{\bar \gamma}\).

Considering the characterization of \(\bar\alpha\) and \(\bar\beta\) in terms of
anti-lexicographic order,  it's easy to see that in the sum  there is only
one non-zero term  which is \(\ds \pd{\epsilon}{y^{\bar\alpha}}\pd{\xi_i}{y^{\bar\beta}}\).

Let \(\bar\sigma=\bar\alpha +\bar\alpha +\cdots\bar\alpha +\bar\beta\)
where there are \(\ell+1\) repetitions of \(\bar\alpha\).
Any other set of multiindices \(\{\delta_1,\delta_2,\dots,\delta_{\ell+1},\beta\}\) such that \(\delta_1+\delta_2+\cdots+\delta_{\ell+1}+\beta=\bar\sigma\) must contain a \(\delta_i \not\in A_1\) or a \(\beta\not\in A_2\).

Thus,  the  coefficient of \(y^{\bar\alpha}\) in (\ref{xlphi}) is \(\ds \left(\pd{\epsilon}{y^{\bar\alpha}}\right)^{\ell+1}\pd{\xi_i}{y^{\bar\beta}}\)
which must vanish and so either \(\ds \pd{\epsilon}{y^{\bar\alpha}}=0 \hbox{\ or\ }
 \pd{\xi_i}{y^{\bar\beta}}=0\)
which is a contradiction.

 Therefore  \(a \le 1\).

\noindent\textbf{1.2)}  Assuming  \(a \le 1\) and taking into account the dependence of each  term with respect to \(\yb\),  we have  \[\cX ^2x_i=\sum_{\na=b}\sum_j\left(\xi_j+\pd\epsilon{y^j}\right)\pd{\xi_i}{\ya}y^{\alpha+j}+R\]
 where \(R\) depends only on \(\yb\)
if \(\nb<b+1\)
Thus, we can write \(\ds \cX ^2x_i=\sum_{\nga=b+1}\yg\Psi_\gamma+R\)
with \[\ds \Psi_\gamma=\sum_{\begin{array}{c} \alpha,j \\ \alpha+j=\gamma\\ \na=b\end{array}} \left(\xi_j+\pd\epsilon{y^j}\right)\pd{\xi_i}{\ya}\]
and which does not depend on \(\yb\) if \(\nb\ge b+1\)

Then, by Lemma  \ref{lem2}, with \(\Phi=\cX ^2x_i\)
 \[  \cX ^{\ell}\Phi  = \sum_{\nga=b+1}\sum_{j_1}\cdots  \sum_{j_\ell }
y^{\gamma+j_1+\cdots+j_\ell}\left(\xi_{j_1}+\pd{\epsilon}{y^{j_1}}\right)\cdots \left(\xi_{j_\ell}+\pd{\epsilon}{y^{j_\ell}}\right) \Psi_\gamma +R,\]
where \(R\) depends on \(\yb\) only if \(\nb<\ell+b+1\).

Let \(\ell\) be such that \(\ell+b+1>\max_i|F|_{x_i}\).

Then, the coefficient of each \(\ys\), for each \(\sigma\) such that \(|\sigma|=\ell+1+b\),  in the expression for \(\cX^\ell\Phi\) should be null since \(\cX\) satisfies condition F.

Let \(\bar\beta \in A_2\) be the as in the previous item.

Let now \(\bar\sigma = \bar\beta + j + \cdots + j\) for any fixed unitary multiindex
\(j\). By characterization of \(\bar\beta\) in terms of
anti-lexicographic order, there is no other unitary multiindex
\(\delta_1,\delta_2,\dots,\delta_{\ell+1}\)
nor \(\beta\) a multiindex in \(A_2\) such
that \(\beta +\delta_1+\delta_2+\cdots+\delta_{\ell+1}=\bar\sigma\)  Thus,  the coefficient of
\(y^{\bar\sigma}\) is \(\ds \left(\xi_j+\pd\epsilon{y^j}\right)^{\ell+1}\pd{\xi_i}{y^{\bar \beta}}\) and
for this to vanish either \(\ds \pd{\xi_i}{y^{\bar\beta}}=0\)
which is a contradiction, or \(\ds \xi_j=-\pd\epsilon{y^j}\).

Therefore, by Theorem \ref{th:os2}, \(\cX\) is a lifting of a \LB\ field in a jet bundle  \(J^A(\lR^n,\lR)\) for a finite \(A\),
 and Theorem \ref{th:os1} guarantees that this bundle can be chosen as being
\(J^1(\lR^n,\lR)\).

\noindent\textbf{ 2)}  Suppose that \(\ds \pd{\xi_i}{\ya}=0\) for all  \(i  = 1 , \dots ,n\) and   all
multiindices  \(\alpha\). Consider

\[\cX ^2y^0 =\sum_i\sum_j\xi_jy^i\pd{\xi_i}{x_j}+\sum_i\sum_j\xi_i\xi_jy^{j+i}+ \sum_i\xi_i\lD^i\epsilon +\sum_i\xi_i\pd\epsilon{x_i}
+ \]
\[+\sum_i\sum_\alpha \xi_iy^{\alpha+i}\pd\epsilon{\ya}+\sum_\alpha\lD^\alpha\epsilon\pd\epsilon{\ya}.\]

\noindent\textbf{ 2.1)} Suppose \(a > 1\).

Considering the dependence of each term on \(\yb\)
and using  Lemma \ref{lem1}, we have  \[\cX ^2y^0=\sum_{\na=a}\sum_{\nga=a}y^{\gamma+\alpha}\pd\epsilon{\yg}\pd\epsilon{\ya}+R,\]
where \(R\) does not depend on \(\yb\) if \(\nb \ge 2a\).

Thus,  \(\Phi  =  \cX ^2y^0\)  satisfies the  hypotheses   of  Lemma  \ref{lem2} and we have  \[  \cX ^{\ell+2}y^0  = \sum_{\nga=a}\sum_{|\beta_1|=a }\cdots \sum_{|\beta_{\ell+1}|=a }
y^{\gamma+\beta_1+\cdots+\beta_{\ell+1}}\pd\epsilon{\yg}\pd{\epsilon}{y^{\beta_1}}\cdots \pd{\epsilon}{y^{\beta_{\ell+1}}} +R.\]

Proceeding analogously to item 1.1, we have a contradiction,
therefore \(a \le 1\).

\noindent\textbf{ 2.2)} Let \(a \le 1\).

In this case: \(\ds \Phi=\cX ^2y^0 =\sum_\alpha\sum_i\left(\xi_i+\pd\epsilon{y^i}\right)\pd\epsilon{\ya}y^{\alpha+1}+R\)
where \(R\) does not depend on \(\yb\) if \(\nb\ge2\).

By Lemma \ref{lem2},
 \[\cX ^{\ell+2}y^0  = \sum_\alpha\sum_{j_1}\cdots  \sum_{j_{\ell+1} }
y^{\gamma+j_1+\cdots+j_{\ell+1}}\left(\xi_{j_1}+\pd{\epsilon}{y^{j_1}}\right)\cdots \left(\xi_{j_{\ell+1}}+ \pd{\epsilon}{y^{j_{\ell+1}}}\right)\pd\epsilon{\ya} +R.\]

Proceeding analogously to item 1.2, we have \(\ds \xi_j=-\pd\epsilon{y^j}\)
for all \(j = 1,2, \dots ,n\).

Therefore, by Theorem \ref{th:os2}, \(\cX\) is a lifting of a \LB\ field  in a finite dimensional jet bundle  and Theorem \ref{th:os1} tells us
that this can be chosen as being \(J^1(\lR^n,\lR)\).
\end{proof}

\vfill\eject

\section{Exponentiability}\label{sec:segu}
\vskip 20pt
We begin the second part of this work with an example
(observation 2, of \cite{otte-svet:JDE36.270}) that shows that not all \LB\ fields exponentiable in \(\Je\infty\), \(m > 1\) are liftings of a \LB\ field in a finite
dimensional jet bundle \(\Je A\).

Let \(n = 1\) and \(m =2\). The functions \(\Phi_1=y^{(2)}_2\) and \(\Phi_2=0\) satisfy  the  hypotheses  of  Theorem  \ref{th:os2},  therefore,  there is  a  finite set
of multipairs \(A\) such that \[\cX =\xi\pd{}x+\sum_{(L,k)\in A}\eta^{(k)}_L\pd{}{y^{(k)}_L}\] defines  a \LB\ field  in \(J^A(\lR,\lR^2)\), where \(\xi\) and \(\eta^{(k)}_L\) are given by Theorem  \ref{th:os2}.

The flow induced in germs is \((f_1,f_2)\mapsto (f_1+t f_2'', f_2)\).

If we introduce a change of variables given by \(\ds g_1 = \frac12 (f_1+f_2)\), \linebreak
\(\ds g_2 = \frac12 (f_1-f_2)\) the flow of germs for \((g_1,g_2)\) will be, therefore,
\[(g_1,g_2)\mapsto \left(g_1+\frac t2(g_1-g_2)'',g_2+\frac t2(g_1-g_2)''\right)\]
which is induced by an exponentiable \LB\ field,
\[\tilde \cX =\sum_{k=0}^\infty \frac{y_1^{k+2}-y_2^{k+2}}2\left(\pd{}{y_1^{(k)}}-\pd{}{y_2^{(k)}}\right)\]
in \(\Jrr\), whose flow is \[\left(x,y_1^{(k)},y_2^{(k)}\right) \mapsto \left(x,y_1^{(k)}+\frac t2\left(y_1^{k+2}-y_2^{k+2}\right),
y_2^{(k)}+\frac t2\left(y_1^{k+2}-y_2^{k+2}\right)\right).\]

For this field, \(\tilde\epsilon^0_1 =y_1^{(2)}-y_2^{(2)}=\tilde\epsilon^0_2\),
functions that do not satisfy the conclusion (iv) of Theorem \ref{th:os1}.
\(\tilde \cX \) is not, therefore, a lifting of a \LB\ field in   the bundle
\(J^A(\lR,\lR^2)\) with \(A\) finite.

As  there is no reason to prefer the pair \((f_1,f_2)\) to the pair
\((g_1,g_2)\), we conclude that it is necessary to situate the problem
of transformations of differentiable equations in \(\Je\infty\).

Backlund himself, in 1876, already had reached the conclusion
that to study transformations of differentiable equations,
it's necessary to consider infinite number of derivatives.

For \(\Je\infty\), however, there are difficulties  in defining
flows of fields.

Thus, consider a \LB\ field in \(\Je\infty\):\linebreak \(\ds   \cX =\sum_{(M,\beta)}\lD^\beta \epsilon^0_M\pd{}{\yb_M}\),
assuming, for convenience, that \(\xi =0\).
Consider the system of differentiable equations
\[\pd{}t (u_{Lt})=\epsilon^0_L\left(x,\pd{{}^\alpha u_L}{x^\alpha}\right)=\epsilon^0_L\circ \tau^\infty(u)\]
with \(u(0,x)=u_0(x)\) given.

If this system has a solution \(u_t(x)\), \(x\in V\subset \lR^n\), then \[\pd{}{t}\left(\pd{{}^\beta u_L}{x^\beta}\right)=\lD^\beta \epsilon^0_L\circ \tau^\infty(u).\]

Thus being, for \(x_0\)  fixed in \(V\), \(t\mapsto \tau^\infty u_t(x_0)\)
provides an integral curve of \(\cX\), passing through \(  \tau^\infty u_0(x_0)\),
in \(\Je \infty\).

Nevertheless we have not gotten the flow of \(\cX\) in \(\Je \infty\),
since the trajectory \(\tau^\infty u_t(x_0)\)
in general does not depend only on the point \(\tau^\infty u_0(x_0)\),
but possibly on the whole given initial condition \(u_0(x)\) since the trajectory is a just a curve on the graph of \(u_t(x)\) which is  determined by all of \(u_0(x)\).
We do not have uniqueness of the integral curves passing 
through \(\tau^\infty u_0(x_0)\).

Thus, even if a \LB\ field  in \(\Je\infty\)
can induce local actions in certain subsets of \(\Je\infty\);
\(\tau^\infty u_0\mapsto \tau^\infty u_t\),
we shall only study
those that are exponentiable in the previously defined sense of Definitions \ref{defo1}.

Another fact that motivates the inclusion of this hypothesis
is that finding  the integral curves presupposes solving the system of differentiable equations above and it becomes
somewhat circular  that the study of transformations of differentiable equations should depend on  solutions of  systems of the same type.

We pass now to some technical lemmas that will be used in the proof of the results in this section.

\begin{lemma}\label{lemo3}
  Let  \(\cX\)  be  a  \(\cC^\infty\) vector field  on a   manifold  \(M\)
of dimension  \(k\) and \(\Phi_t\)   the  flow of \(\cX\).  Let  \(f:M  \to  \lR\)
 be a  \(\cC^\infty\) function.
Then  \(\cX f  =  0\) if and only if \(f\circ \Phi_t=f\).
\end{lemma}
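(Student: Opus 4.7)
The plan is to base everything on the identity
\[
\ddt\bigl(f\circ \Phi_t\bigr)(p) \;=\; (\cX f)\bigl(\Phi_t(p)\bigr),
\]
which is immediate from the definition of the flow together with the chain rule: by the group law $\Phi_{t+s}=\Phi_t\circ\Phi_s$, the curve $s\mapsto f\circ\Phi_{t+s}(p)$ at $s=0$ is the directional derivative of $f$ along the tangent vector $\cX_{\Phi_t(p)}$, i.e.\ $(\cX f)(\Phi_t(p))$.

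For the forward direction, I assume $\cX f=0$. Fixing $p$, the function $t\mapsto (f\circ \Phi_t)(p)$ is defined on the maximal interval $J_p$, is $\cC^\infty$, and has derivative $(\cX f)(\Phi_t(p))=0$ everywhere. Hence it is constant; evaluating at $t=0$ where $\Phi_0=\mathrm{id}_M$ gives $(f\circ\Phi_t)(p)=f(p)$ for every $t\in J_p$. Since $p$ was arbitrary, $f\circ\Phi_t=f$ throughout the domain of the flow.

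For the reverse direction, assume $f\circ\Phi_t=f$ wherever defined. Differentiating at $t=0$ and using the identity above at $t=0$ (where $\Phi_0(p)=p$) yields
\[
0 \;=\; \ddt\Bigl|_{t=0}\bigl(f\circ\Phi_t\bigr)(p) \;=\; (\cX f)(p)
\]
for every $p\in M$, hence $\cX f=0$.

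The proof is essentially a one-line application of the fundamental relation between a vector field and its flow, so there is no real obstacle; the only point demanding care is that both arguments are local in $t$, restricted to the maximal interval $J_p$ on which the flow exists through $p$, but since $0\in J_p$ for every $p$, both implications go through without modification.
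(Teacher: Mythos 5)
Your proof is correct and rests on exactly the same identity the paper uses, namely \(\ddt(f\circ\Phi_t)=(\cX f)\circ\Phi_t\) together with \(\Phi_0=\mathrm{id}\); you have merely spelled out the two directions that the paper leaves implicit. No issues.
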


\begin{proof}
  As \(\ds \frac d{dt}(f\circ \Phi_t)=(\cX f)\circ \Phi_t\) and \(\Phi_0=Id\),
we have the result.
\end{proof}

\begin{lemma}\label{lemo4}
Let  \(\cX\)  be a \(\cC^\infty\) vector  field  in a   manifold  \(M\)
of dimension \(k\). Then the system \(\ds \pd{}{t}f_t=\cX f_t\)  has the unique solution
\(f_t(x)=f_0\circ \Phi_t(x)\), with \(f_0\)  given.
\end{lemma}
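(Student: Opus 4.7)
The plan is to read the equation $\partial_t f_t = \cX f_t$ as a first-order linear transport PDE and solve it by characteristics, reducing uniqueness to the previously proved Lemma \ref{lemo3}.

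First, for existence, I would verify directly that the candidate $f_t(x) := f_0(\Phi_t(x))$ satisfies the equation. Differentiating the composition in $t$ gives $\partial_t f_t(x) = (df_0)_{\Phi_t(x)}\,\cX(\Phi_t(x)) = (\cX f_0)(\Phi_t(x))$. On the other side, $(\cX f_t)(x) = (df_0)_{\Phi_t(x)}\cdot (d\Phi_t)_x\,\cX(x)$ by the chain rule, so the two agree precisely because $(d\Phi_t)_x\,\cX(x) = \cX(\Phi_t(x))$. This self-invariance of $\cX$ under its own flow is immediate from the group property $\Phi_{t+s}=\Phi_t\circ\Phi_s$: differentiating $\Phi_{t+s}(x)$ at $s=0$ in two ways yields the identity.

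For uniqueness, I would introduce the auxiliary vector field $\tilde\cX := \cX - \partial_t$ on the product manifold $M \times \lR$, whose flow is $\tilde\Phi_s(x,t) = (\Phi_s(x), t-s)$. Given any solution $g_t$, set $F(x,t) := g_t(x)$. Then the PDE $\partial_t g_t = \cX g_t$ is exactly $\tilde\cX F = 0$. By Lemma \ref{lemo3} applied to $\tilde\cX$ and $F$, this is equivalent to $F\circ\tilde\Phi_s = F$ for all $s$ in the relevant interval, i.e.\
\begin{equation*}
g_{t-s}(\Phi_s(x)) = g_t(x).
\end{equation*}
Setting $s=t$ gives $g_t(x) = g_0(\Phi_t(x)) = f_0(\Phi_t(x))$, establishing uniqueness (and, as a bonus, re-deriving existence).

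The only real obstacle is bookkeeping, not mathematics: one must get the sign conventions right in $\tilde\cX$ and its flow so that the contraction $s=t$ lands on the prescribed initial condition rather than on its inverse, and one must interpret the statement locally in $t$ on the maximal interval $J_x$ of the flow $\Phi_t$ whenever $\cX$ is not complete. With those caveats in place the argument is essentially three lines.
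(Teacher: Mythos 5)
Your proposal is correct and follows essentially the same route as the paper: both pass to the product manifold $M\times\lR$, introduce the auxiliary field (the paper's $\cX^\sharp=\partial_s-\cX$ is just the negative of your $\tilde\cX$), rewrite the evolution equation as annihilation of $F(x,t)=g_t(x)$ by that field, and invoke Lemma \ref{lemo3} to conclude constancy along its flow, which yields $g_t=f_0\circ\Phi_t$. Your added direct verification that $f_0\circ\Phi_t$ solves the equation is a harmless supplement to what the paper leaves to the bidirectional equivalences.
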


\begin{proof}
Introduce \(\ds \cX ^\sharp = \pd{}{s}-\cX \)
a vector field in \(M^\sharp = \lR \times M\) of dimension \(k+1\)
and defines in \(M^\sharp\) the function \(f^\sharp(s,x) =f_s(x)\)
for each family \(f_s\) in \(M\) and vice-versa. If \(\Phi_t(x)\) is the 
flow of \(\cX\),
then \(\Phi^\sharp_t(s,x) = (s+t, \Phi_{-t}(x))\)
is the flow of \(\cX^\sharp\).

We now show that
\(f^\sharp (s,x)\) satisfies  \(\cX^\sharp f^\sharp =0 \Leftrightarrow f_s(x)\) satisfies \(\ds \cX f_s(x)-\pd{}{s}f_s(x)=0\).

  Suppose that \(f^\sharp\) satisfies \(\cX^\sharp f^\sharp =0\).  By  Lemma \ref{lemo3},
\(f^\sharp \circ \Phi^\sharp_t = f^\sharp \Leftrightarrow \linebreak f^\sharp(s+t, \Phi_{-t}(x))=f^\sharp (s,x)\Leftrightarrow
f^\sharp(s+t, x)=f^\sharp (s,\Phi_t(x))\Rightarrow f_t(x)=f_0\circ\Phi_t(x)\).
\end{proof}

\begin{lemma}\label{lemo5}
Let  \(\cX\)  be a   \(\cC^\infty\)   vector  field  in a  manifold  \(M\) of dimension  \(m\).  Let  \(W  =  (a_i^j)\), \(1 \le i, j  \le  n\),  be  a matrix field in \(M\).

 Then  the  system
\begin{eqnarray}
\nonumber
  \pd{f_{it}}{t} &=& \cX  f_{it}+\sum_j a_i^j f_{jt} \\ \nonumber
  f_{i0} &=&0, \, 1\le i\le n
\end{eqnarray}
has a unique solution \(f_{it} = 0\), \(1 \le i \le n\).
\end{lemma}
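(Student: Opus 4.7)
The plan is to reduce the PDE system to an ODE along integral curves of an auxiliary vector field on $\lR\times M$, exactly as in the proof of Lemma \ref{lemo4}. First I would introduce $M^\sharp=\lR\times M$ and the vector field $\cX^\sharp=\pd{}{s}-\cX$, whose flow I know to be $\Phi_t^\sharp(s,x)=(s+t,\Phi_{-t}(x))$. Setting $f_i^\sharp(s,x)=f_{is}(x)$, the given equation $\pd{f_{it}}{t}=\cX f_{it}+\sum_j a_i^j f_{jt}$ translates immediately into
\[
\cX^\sharp f_i^\sharp = \sum_j a_i^j\, f_j^\sharp \quad\text{on } M^\sharp,
\]
with $f_i^\sharp(0,x)=0$.

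Next I would evaluate along the integral curve of $\cX^\sharp$ issuing from $(0,x)$, namely $\gamma(t)=(t,\Phi_{-t}(x))$, and set $g_i(t)=f_i^\sharp(\gamma(t))=f_{it}(\Phi_{-t}(x))$. Since $\gamma'(t)=\cX^\sharp_{\gamma(t)}$, the chain rule together with the identity above gives the purely $t$-dependent linear ODE system
\[
g_i'(t)=\sum_j a_i^j(\Phi_{-t}(x))\, g_j(t),\qquad g_i(0)=0.
\]
Standard uniqueness for linear ODEs with smooth coefficients then forces $g_i(t)\equiv 0$, hence $f_{it}(\Phi_{-t}(x))=0$ for all admissible $t$ and all $x\in M$.

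Finally, since $\Phi_{-t}$ is a diffeomorphism of (the appropriate open subset of) $M$, I can substitute $x=\Phi_t(y)$ to conclude $f_{it}(y)=0$ for every $y$ and every $t$ in the domain of definition, which is precisely the desired uniqueness statement.

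I do not expect any real obstacle: the work is essentially the same bookkeeping used in Lemma \ref{lemo4}, with the only new ingredient being the appeal to uniqueness for linear (rather than homogeneous scalar) ODEs along the fibers of $\Phi_t^\sharp$. The only minor care needed is to restrict attention to the open set on which $\Phi_t$ is defined, which is harmless for a local uniqueness statement.
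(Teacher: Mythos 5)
Your argument is correct, but it follows a genuinely different route from the paper's. The paper does not pass to \(\lR\times M\) at all for this lemma: it adjoins auxiliary fiber variables \(y^1,\dots,y^n\), forms the generating function \(f^\sharp_t=\sum_j f_{jt}y^j\) on \(M\times\lR^n\), observes that the coupled inhomogeneous system becomes the single homogeneous transport equation \(\pd{}{t}f^\sharp_t=\cX^\sharp f^\sharp_t\) for the enlarged field \(\cX^\sharp=\cX+\sum_{j,k}y^ja^k_j\pd{}{y^k}\), and then quotes Lemma \ref{lemo4} to conclude \(f^\sharp_t=f^\sharp_0\circ\Phi^\sharp_t=0\), hence \(f_{it}=0\). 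You instead imitate the internal mechanics of the proof of Lemma \ref{lemo4}: augment time via \(M^\sharp=\lR\times M\), \(\cX^\sharp=\pd{}{s}-\cX\), restrict along the integral curves \(t\mapsto(t,\Phi_{-t}(x))\), and reduce to uniqueness for the linear ODE system \(g_i'(t)=\sum_j a_i^j(\Phi_{-t}(x))g_j(t)\), \(g_i(0)=0\). Your version is more elementary and self-contained in that it leans only on classical linear ODE uniqueness and makes the propagation of the zero initial data along characteristics explicit (including the localization to the domain of the flow, which you handle correctly); the paper's version buys economy, reusing Lemma \ref{lemo4} verbatim and avoiding any explicit characteristic computation, at the cost of the slightly less transparent trick of encoding the matrix \(W\) as a vector field linear in the auxiliary fiber variables. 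Both arguments require the same implicit smoothness of \((t,x)\mapsto f_{it}(x)\), so there is no gap; just be aware that your chain-rule step is where that joint smoothness is used.
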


\begin{proof}
Consider \(\{x_1,x_2,\dots,x_m\}\) variables of \(f_{it}\), \(1\le i\le n\).
	Introduce new variables, namely: \(y^1 ,y^2 , \dots y^n\) and defines
\(f_t^\sharp =\sum_j f_{jt}y^j\) where \(f_{jt}\) is a  solution   of  the system  above.
\[\pd{}{t}f^\sharp_t =\sum_j\pd{f_{jt}}{t}y^j =\sum_j\cX f_{jt}y^j+\sum_{k,j} y^ja^k_jf_{kt}=\cX f^\sharp+\sum_{k,j}y^ja^k_j\pd{f^\sharp}{y^k}.\]

  Consider   \[\cX^\sharp=   \cX   +  \sum_{j,k}y^ja^k_j\pd{}{y^k}\]   with  flow   \(\Phi^\sharp_t\).

   A   solution of \(\ds \cX ^\sharp f^\sharp_t   =   \pd{}{t}f^\sharp_t\) is \(f^\sharp_t =   f^\sharp_0\circ\Phi^\sharp_t\)  (by  Lemma  \ref{lemo4})
and  from   \(f^\sharp_0=0\)    we have    \(f_{it}  =   0\).
\end{proof}

\begin{lemma}\label{lemo6}
Let \(U \subset \lR^k\) be open and let \(L\) be the module over \(\cC^\infty(U)\) 
 of fields in \(U\) generated by \(\ds \pd{}{x_i},\,1\le i\le \ell\), that is,
\(\ds {\cY} \in L\Leftrightarrow {\cY}=\sum_{i=1}^\ell \xi_i(x)\pd{}{x_i}\).
Let \(\cX\) be a vector field in \(U\).
\(L\) is invariant  under   \({\rm ad}_\cX (\cdot)= [\cX , \cdot ]\) if and only  if, for each
\(p\), \(\ell+1\le p\le k\), \(\cX(x_p)\)
does not depend on \(x_1,\dots,x_\ell\).
\end{lemma}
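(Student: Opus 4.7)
The plan is to prove both directions by direct calculation of the commutator $[\mathcal{X},\partial/\partial x_i]$ for $1\le i\le \ell$, using the fact that $\mathcal{X}(x_p)$ is precisely the coefficient of $\partial/\partial x_p$ in the expression of $\mathcal{X}$.

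First I would write $\mathcal{X}=\sum_{j=1}^{k}\xi_j(x)\,\partial/\partial x_j$ with $\xi_j=\mathcal{X}(x_j)$, and record the basic commutator
\[
[\mathcal{X},\partial/\partial x_i]=-\sum_{j=1}^{k}\frac{\partial \xi_j}{\partial x_i}\,\frac{\partial}{\partial x_j},\qquad 1\le i\le\ell.
\]
For the necessity direction, the assumption that $L$ is $\mathrm{ad}_{\mathcal{X}}$-invariant forces $[\mathcal{X},\partial/\partial x_i]\in L$; since $L$ is spanned over $\mathcal{C}^\infty(U)$ by the coordinate fields $\partial/\partial x_1,\dots,\partial/\partial x_\ell$, the coefficients of the $\partial/\partial x_p$ for $p\ge\ell+1$ must vanish, giving $\partial\xi_p/\partial x_i=0$ for all $1\le i\le\ell$ and all $\ell+1\le p\le k$, which is exactly the stated condition on $\mathcal{X}(x_p)$.

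For the sufficiency direction I would take an arbitrary element $\mathcal{Y}=\sum_{i=1}^{\ell}a_i(x)\,\partial/\partial x_i\in L$ and expand
\[
[\mathcal{X},\mathcal{Y}]=\sum_{i=1}^{\ell}\mathcal{X}(a_i)\,\frac{\partial}{\partial x_i}+\sum_{i=1}^{\ell}a_i\,[\mathcal{X},\partial/\partial x_i].
\]
The first sum already lies in $L$. For the second sum, using the commutator formula above together with the hypothesis $\partial\xi_p/\partial x_i=0$ for $p>\ell$, the summation over $j$ collapses to $1\le j\le \ell$, so the second sum also lies in $L$, completing the proof.

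There is no real obstacle: the argument is essentially the observation that the coefficients $\partial\xi_j/\partial x_i$ appearing in $[\mathcal{X},\partial/\partial x_i]$ must be supported on $j\le\ell$, and that the Leibniz-type splitting $[\mathcal{X},a\mathcal{Z}]=\mathcal{X}(a)\mathcal{Z}+a[\mathcal{X},\mathcal{Z}]$ then propagates the condition to all of $L$. The only mild subtlety is to note that $\xi_p=\mathcal{X}(x_p)$, so that the intrinsic statement about the coordinate functions $\mathcal{X}(x_p)$ matches the coefficient condition extracted from the commutator.
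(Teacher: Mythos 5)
Your proof is correct and follows essentially the same route as the paper: compute \([\cX,\partial/\partial x_r]=-\sum_j(\partial\xi_j/\partial x_r)\,\partial/\partial x_j\) and read off that the coefficients of \(\partial/\partial x_p\), \(p>\ell\), must vanish, which is exactly the condition on \(\cX(x_p)\). You additionally spell out the sufficiency direction via the Leibniz rule \([\cX,a\cZ]=\cX(a)\cZ+a[\cX,\cZ]\), which the paper leaves implicit, so your write-up is in fact slightly more complete.
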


\begin{proof}
Let   \begin{eqnarray*}
 \cX &=& \sum_{i=1}^\ell \xi_i\pd{}{x_i}+\sum_{j=\ell+1}^k \eta_j\pd{}{x_j}\\
 \cX (x_j)&=& \eta_j.
 \end{eqnarray*}

 Now,  \[\ds \left[\cX ,  \pd{}{x_r}\right]  =-  \left(\sum_{i=1}^\ell \pd{\xi_i}{x_r}\pd{}{x_i}+\sum_{j=\ell+1}^k \pd{\eta_j}{x_r}\pd{}{x_j}\right)\]
which,  by  hypothesis,  is in \(L\).

Thus,  \(\ds \pd{\eta_j}{x_r}=0\), \(\ell+1  \le  j  \le k\) and \(1 \le  r  \le  \ell\).
\end{proof}

\begin{lemma}\label{lemo7}
Given \(\cX\) and \(\cZ\) two vector fields, then,  for all \(n \ge 1\), \(\cZ\cX ^n\) can be written as one expression \(\ds \sum_{i=1}^n c_i{\cZ}_i\)
where \(c_i\) are expressions that involve sums and products of \(\cX\) and \(\cZ\) and \(\cZ_i ={\rm ad}_\cX ^{i-1}({\cZ})=[\cX ,[\cX , \cdots ,[\cX ,{\cZ}]]]\) and \(\cZ_1\)
identifies  \(\cZ\) itself.
\end{lemma}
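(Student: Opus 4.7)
The plan is to induct on $n$, using as the sole nontrivial input the identity
$$\cZ_i\,\cX \;=\; \cX\,\cZ_i \;-\; \cZ_{i+1},$$
which is just the definition $\cZ_{i+1}=[\cX,\cZ_i]=\cX\cZ_i-\cZ_i\cX$ rearranged. The idea is that every time one pushes an $\cX$ past a $\cZ_i$, a fresh higher bracket $\cZ_{i+1}$ is produced, so iterating this reduction on $\cZ\cX^n$ will express it as a sum over the $\cZ_i$'s with coefficients built from $\cX$ (and, if one wishes, $\cZ$) alone.

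For the base case $n=1$, the identity itself reads $\cZ\cX=\cX\cZ_1-\cZ_2$, already in the advertised form with $c_1=\cX$ and $c_2=-1$. For the inductive step, I would assume $\cZ\cX^n=\sum_i c_i\,\cZ_i$ with each $c_i$ a polynomial in $\cX$, then multiply on the right by $\cX$ and apply the identity termwise:
$$\cZ\cX^{n+1} \;=\; \sum_i c_i\,\cZ_i\,\cX \;=\; \sum_i c_i\bigl(\cX\,\cZ_i - \cZ_{i+1}\bigr).$$
Collecting terms in each $\cZ_j$ gives the required shape at level $n+1$, with updated coefficients $c_j' = c_j\,\cX - c_{j-1}$ (using $c_0=0$ as a boundary convention). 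This is a Pascal-type recursion that closes up to the explicit noncommutative binomial identity
$$\cZ\cX^n \;=\; \sum_{k=0}^{n}(-1)^k{n \choose k}\,\cX^{n-k}\,\cZ_{k+1},$$
which one could alternatively write down first and verify directly by the same induction.

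No serious obstacle is anticipated: the argument is purely algebraic and driven by one commutator rewrite, and in particular does not need any analytic input about the flows of $\cX$ or $\cZ$. The only point requiring care is the index bookkeeping, since the push operation naturally generates a term in $\cZ_{n+1}$ at level $n$; this is reconciled with the lemma's stated range $1\le i\le n$ either by absorbing that outlying term into the coefficient of $\cZ_n$, or by reading the asserted sum as running over whatever indices the induction produces. Either reading leaves the substantive content of the lemma unchanged, and in particular preserves the form in which later sections of the paper will invoke it.
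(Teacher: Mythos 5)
Your proof is correct and follows essentially the same route as the paper: induction on \(n\) driven by the single rewrite \(\cZ_i\cX=\cX\cZ_i-\cZ_{i+1}\), exactly as in the paper's argument (the closed binomial form and the remark about the index range are harmless extras; the paper's own statement and proof exhibit the same \(\cZ_{n+1}\) bookkeeping looseness you flag).
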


\begin{proof}
\(\cZ \cX  = \cX  {\cZ} - [\cX ,{\cZ}] = \cX {\cZ}_1 - {\cZ}_2\).
Let, by induction, \(\ds {\cZ}\cX ^n=\sum_{i=1}^n c_i{\cZ}_i\)
then
\[{\cZ}\cX ^{n+1}=\left(\sum_{i=1}^n c_i{\cZ}_i\right)\cX =\sum_{i=1}^nc_i(\cX {\cZ}_i-{\cZ}_{i+1}),\]
which has the desired form.
\end{proof}

Given a \LB\ field \(\cX\) in \(\Je\infty\),
\[\cX =\sum_i\xi_i\pd{}{x_i}+\sum_{(L,\alpha)}\eta^\alpha_L\pd{}{y^\alpha_L},\]
consider
\begin{equation}\label{bigb}
  B=\left\{(M,\beta), \,|\, \exists i: \pd{\xi_i}{y_M^\beta} \neq 0 \hbox{\ or\ } \exists L: \pd{\eta^0_L}{y^\beta_M}\neq 0\right\}.
\end{equation}

As  \(\cX\) is a \LB\ field,  one has
\[\eta^\alpha_L=\lD^\alpha\eta^0_L-\lD^\alpha\left(\sum_{i=1}^ny^i_L\xi_i\right)-\sum_i y^{\alpha+i}_L\xi_i,\]
and therefore \(\ds \pd{\eta^\alpha_L}{y^\gamma_M}=0\) if \(\nga > \na+|B|\).

For each \(k \ge |B|\), \(\eta^\alpha_L\) is a polynomial in
\(\yg_M\), \(\nga >k\),
with coefficients being functions only of
\(x\) and \(\yg_M\), \(\nga \le k\). We then write \(\eta^\alpha_L=p^{k\alpha}_L +\eta^{k\alpha}_L\),
where \(\eta^{k\alpha}_L\) represents the homogeneous term
of grade zero of this
polynomial and \(p^{k\alpha}_L\)
represents the sum of the homogeneous terms of grade superior to zero.

Then we defines:

\begin{definition}\label{def5}
  \begin{equation}\label{eq5}
  \cX ^{(k)} =\sum_i\xi_i\pd{}{x_i}+\sum_L\eta^0_L\pd{}{y^0_L}+\sum_{\begin{array}{c} (L,\alpha) \\ 0\le \na \le k\end{array}} \eta^{k\alpha}_L \pd{}{\ya_L},
\end{equation}
\end{definition}
which is a vector field in \(\Je k\).

Besides this, consider the set of all the monomials that appear in  \(p^{k\alpha}_L\), for all \((L, \alpha)\)
with \(\na \le k\)
and enumerate them:
\(1,2, \dots,\nu(k)\).
Let now \(c^{k\alpha}_{\ell L}\)
be the coefficient of the \(\ell\)-th
monomial in \(p^{k\alpha}_L\). Each
 \(c^{k\alpha}_{\ell L}\) is a function of the
variables \(x_i\) and \(\yg_M\), \(\nga \le k\).
Then
\begin{definition}\label{def6}
  \begin{equation}\label{eq6}
  {\cY}^{(k)}_\ell = \sum_{(L,\alpha)}c^{k\alpha}_{\ell L}\pd{}{\ya_L},
  \end{equation}
\end{definition}
is a field in \(\Je k\)
for each  \(\ell= l,\dots ,\nu(k)\).

Section \ref{sec:terc} gives
 examples  of   fields  \(\cX^{(k)}\) and \(\cY^{(k)}_\ell\).

\begin{theorem}\label{th2}
Let \(\cX\) be a \LB\ field exponentiable in \(\Je\infty\)
and \(\Phi_t\) its flow. Given a finite set \(C\) of multipairs, there exists a real number \(k_0\) such that, for all
\(k \ge k_0\), the flow of \(\cX^{(k)}\) in \(\Je k\)
satisfies
\(\Phi^{(k)\alpha}_{Lt}=H^\alpha_{Lt}, \forall (L,\alpha)\in C\); \(\Xi_{it} =\Phi^{(k)}_{it}\),
\(i=1,2,\dots,n\).
\end{theorem}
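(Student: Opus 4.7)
The plan is to use exponentiability to confine the variable dependence of the target flow components, then show that on such ``localized'' functions the operators \(\cX\) and \(\cX^{(k)}\) coincide, and finally invoke the ODE uniqueness of Lemma~\ref{lemo4} on the finite-dimensional manifold \(J^k\).

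\emph{Choice of \(k_0\) and the key identity.} By exponentiability, each of the finitely many functions \(\Xi_{it}\) (\(1\le i\le n\)) and \(H^\alpha_{Lt}\) ( \((L,\alpha)\in C\)) depends on only a finite set of variables, and this set may be chosen independent of \(t\). Let \(k_0\) be large enough that the union of all these dependency sets is contained in \(J^{k_0}\), and set \(k_0'=k_0+|B|\). The claim is that for any \(g\in\cC^\infty(J^\infty)\) with \(\pd g{\ya_L}=0\) whenever \(\na>k_0\), and any \(k\ge k_0'\), one has \(\cX(g)=\cX^{(k)}(g)\). Indeed, the position terms and the \(\na=0\) terms cancel, and the remainder is
\[
(\cX-\cX^{(k)})(g)=\sum_{1\le\na\le k} p^{k\alpha}_L\,\pd g{\ya_L}+\sum_{\na>k}\eta^\alpha_L\,\pd g{\ya_L}.
\]
The second sum vanishes because \(\na>k\ge k_0\) forces \(\pd g{\ya_L}=0\). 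In the first sum only multipairs with \(\na\le k_0\) contribute; for such multipairs the bound \(\na+|B|\) on the order of the variables appearing in \(\eta^\alpha_L\) gives \(\na+|B|\le k_0+|B|\le k\), so \(\eta^\alpha_L\) contains no variable of order exceeding \(k\), forcing \(p^{k\alpha}_L=0\).

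\emph{ODE uniqueness.} Let \(g\) be either \(x_i\) or \(\ya_L\) with \((L,\alpha)\in C\), and set \(u_t=g\circ\Phi_t\); then \(u_t\) is \(\Xi_{it}\) or \(H^\alpha_{Lt}\) respectively. By the choice of \(k_0\), \(u_t\) depends only on variables in \(J^{k_0}\subset J^k\) and so extends to a \(\cC^\infty\) function on the finite-dimensional manifold \(J^k\). The flow-invariance of \(\cX\) under \(\Phi_t\) gives \(\pd{u_t}{t}=\cX(u_t)\), and the key identity upgrades this to
\[
\pd{u_t}{t}=\cX^{(k)}(u_t),\qquad u_0=g.
\]
Lemma~\ref{lemo4}, applied to the \(\cC^\infty\) field \(\cX^{(k)}\) on \(J^k\), identifies the unique solution of this Cauchy problem as \(g\circ\Phi^{(k)}_t\); specializing to \(g=\ya_L\) and \(g=x_i\) yields \(H^\alpha_{Lt}=\Phi^{(k)\alpha}_{Lt}\) for \((L,\alpha)\in C\) and \(\Xi_{it}=\Phi^{(k)}_{it}\) on \(J^k\).

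The main technical point is the order book-keeping in the key identity: exponentiability controls the orders of variables on which the target components depend (\(\le k_0\)), while the \LB\ recursion controls the orders of variables on which each \(\eta^\alpha_L\) depends (\(\le\na+|B|\)), and the threshold \(k\ge k_0+|B|\) is chosen precisely so that both problematic sums vanish. Once that identity is in hand, everything reduces to a single application of finite-dimensional ODE uniqueness.
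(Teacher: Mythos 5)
Your proposal is correct and follows essentially the same route as the paper: exponentiability confines \(\Xi_{it}\) and \(H^\alpha_{Lt}\), \((L,\alpha)\in C\), to variables of order \(\le k_0\), the evolution equations \(\frac d{dt}H^\alpha_{Lt}=\cX H^\alpha_{Lt}\), \(\frac d{dt}\Xi_{it}=\cX\Xi_{it}\) are rewritten with \(\cX^{(k)}\) in place of \(\cX\), and Lemma \ref{lemo4} on the finite-dimensional space \(\Je k\) identifies these with the components of \(\Phi^{(k)}_t\). The only (immaterial) difference lies in how \(\cX=\cX^{(k)}\) on the relevant functions is justified: the paper exploits the confined dependence of \(\frac d{dt}H^\alpha_{Lt}\) together with the independence of the monomials in variables of order \(>k\) (thereby also obtaining \(\cY^{(k)}_\ell H^\alpha_{Lt}=0\)) at threshold \(\max\{|\gamma|:(N,\gamma)\in A\cup B\}\), whereas you prove the operator identity on all functions of variables of order \(\le k_0\) directly from the bound \(\na+|B|\) on the order of variables occurring in \(\eta^\alpha_L\), at the slightly larger threshold \(k_0+|B|\), which is harmless since the theorem only asserts existence of some threshold.
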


\begin{proof}
Consider
\[A=\left\{(M,\beta) \,|\, \exists t \pd{\Xi_{it}}{\yb_M}\neq 0 \hbox{\ or\ } \pd{H^\alpha_{Lt}}{\yb_M}\neq 0, (L,\alpha)\in C\right\}.\]
Note that \(A \supset  C\),  since  \(H^\alpha_{L0}=\ya_L\)

Take \(k_0= \max \{\nga, (N,\gamma) \in A \cup B\}\), B defined previously in (\ref{bigb}).

	Besides this, \(\cY^{(k)}_\ell(\Xi_{it})=0\); \(\cY^{(k)}_\ell(H^\alpha_{Lt})=0\),
\(\forall \ell=1,2,\dots,\nu(k)\); \(\forall (L,\alpha)\in C\); \(\forall i=1,2,\dots,n\).

From
\begin{eqnarray*}
  \frac d{dt}H^\alpha_{Lt} &=& \cX  H^\alpha_{Lt} \\
 \frac d{dt}\Xi_{it} &=&  \cX  \Xi_{it}
\end{eqnarray*}
we have:
if \((L,\alpha) \in C\), and for all \(i = 1, \dots ,n\), the left-hand side of each equality only contains variables \(\yb_M\) if \((M,\beta) \in A\),
therefore the right-hand side
can not depend on other variables and, therefore:
\[\left\{\begin{array}{l} \cX  H^\alpha_{Lt}=\cX ^{(k)} H^\alpha_{Lt}, (L,\alpha)\in C \hbox{\ for all\ }k\ge k_0; \\
{\cY}^{(k)}H^\alpha_{Lt}=0\hbox{\ for all\ }k\ge k_0. \end{array}\right.\]

\[\left\{\begin{array}{l} \cX  \Xi_{it}=\cX ^{(k)}\Xi_{it} \\
{\cY}^{(k)}\Xi_{it}=0 \end{array}\right. \]

From what was shown above, we have:
\[\left\{\begin{array}{l}\ds\frac d{dt} \Xi_{it}=\cX ^{(k)} \Xi_{it}\\ \\
\ds\frac d{dt} H_{Lt}^\alpha = \cX ^{(k)}H_{Lt}^\alpha\\ \\
\Xi_{i0} = x_i\\ \\
 H_{L0}^\alpha=\ya_L
\end{array}\right.\]

And, by Lemmas \ref{lemo3} and \ref{lemo4}, we conclude that:
\[\Xi_{it}=x_i\circ \Phi^{(k)}_t=\Phi^{(k)}_{it} \hbox{\ and\ }  H_{Lt}^\alpha=\ya_L\circ \Phi^{(k)}_t = \Phi^{(k)\alpha}_{Lt}\]
\end{proof}

In other words, any finite set of components of the flow
\(\Phi_t\) in a jet bundle of infinite dimension, \(\Je\infty\),
coincides with a corresponding set of components of
\(\Phi^{(k)}\) in a finite dimension space.

Note that \(\Phi^{(k)}\) is  not necessarily a flow of a \LB\ field,
and thus \(\Phi\) is not a lifting of \(\Phi^{(k)}\).

An immediate consequence of Theorem \ref{th2} is:

\begin{theorem}\label{th3}(Uniqueness of flow).\
\ \newline
Let \(\cX\) be a \LB\ field exponentiable in \(\Je\infty\). Suppose that \(\Phi_t\) and \(\Psi_t\) are flows of \(\cX\).
Then \[\Phi_t = \Psi_t\].
\end{theorem}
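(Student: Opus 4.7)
The plan is to reduce the question of uniqueness of flows on the infinite-dimensional bundle $\Je \infty$ to the classical uniqueness of flows of smooth vector fields on finite-dimensional manifolds, using Theorem~\ref{th2} as the bridge. Since two flows agree as maps precisely when they agree componentwise, and each component depends only on finitely many jet coordinates, it suffices to fix an arbitrary finite set $C$ of multipairs and show that the components of $\Phi_t$ and $\Psi_t$ over $x_i$ and over $(L,\alpha)\in C$ coincide.

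With such a $C$ fixed, I would apply Theorem~\ref{th2} separately to $\Phi_t$ and to $\Psi_t$, obtaining integers $k_0^\Phi$ and $k_0^\Psi$ such that, for every $k\ge k_0^\Phi$, the components of $\Phi_t$ over $x_i$ and $(L,\alpha)\in C$ equal the corresponding components of the flow of $\cX^{(k)}$ on $\Je k$, and symmetrically for $\Psi_t$. The crucial point is that the finite-dimensional field $\cX^{(k)}$ is built, via Definition~\ref{def5}, from the principal components $\xi_i$ and $\eta^0_L$ of $\cX$ alone, hence is the \emph{same} vector field regardless of which candidate flow we feed into Theorem~\ref{th2}. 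Taking $k=\max(k_0^\Phi,k_0^\Psi)$, both candidate flows are matched, on the components indexed by $C$ and by the $x_i$, against the flow of one common smooth vector field on the finite-dimensional manifold $\Je k$.

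Standard ODE theory (as recalled just before Definition~\ref{defo1}) guarantees a unique such flow on $\Je k$. Therefore the selected components of $\Phi_t$ and of $\Psi_t$ agree, and since $C$ was arbitrary they agree on every component, giving $\Phi_t=\Psi_t$. There is essentially no obstacle here beyond the bookkeeping of choosing $k$ large enough to simultaneously accommodate both flows, since Theorem~\ref{th2} has already done the hard work of collapsing each finite slice of $\Phi_t$ to a finite-dimensional flow of a field that depends only on $\cX$.
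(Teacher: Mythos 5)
Your argument is correct and is essentially the paper's: both proofs reduce uniqueness to Theorem~\ref{th2} (applied separately to each candidate flow, then taking $k$ larger than both thresholds) together with the classical uniqueness of the flow of the finite-dimensional field $\cX^{(k)}$, which is built from $\cX$ alone. The only cosmetic difference is that the paper invokes Theorem~\ref{th2} just for the principal components $(L,0)$ and recovers the remaining components via the operators $F^\beta_M$, whereas you let the finite set $C$ be arbitrary, which works equally well.
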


\begin{proof}
 Suppose that \(\Phi_{it}\) and \(\Phi^0_{Lt}\)
only depend on variables \(x_i\) and \(\yg_M\)
with \(\nga\le a\) and that \(\Psi_{it}\) and \(\Psi^0_{Lt}\)  only depend on variables
\(x_i\) and \(\yg_M\) with \(\nga\le b\).

If \(\cX\) is exponentiable, given the set of multipairs
\(C=\{(L,0),L=1,2, \dots,m\}\), there exists \(k_0\) such that for all
\(k \ge k_0\) the flow of \(\wsp{\cX}k{}\), \(\wsp \Phi{k}t\) has  components 
 \(\wsp \Phi{k}{it}\) and \(\Phi^{(k)0}_{Lt}\)
equal to the respective components of the flow of \(\cX\) (Theorem \ref{th2}).

Take \(k = \max \{k_0, a, b\}\). Then \(\wsp \Phi k{it}=\Phi_{it}=\Psi_{it}\) \(j=1,2\), \(L=1,2,\dots, m\)
and \(\Phi^{(k)0}_{Lt}=\Phi_{Lt}=\Psi_{Lt}^0\) \(j=1,2\), \(L=1,2,\dots, m\).
Where we have the equalities \(\Phi_{it}=\Psi_{it}\), \(\Phi^0_{Lt}=\Psi^0_{Lt}\).

Then  the rest or the components are obtained from these
by the differential operator \(F^\beta_M\) and we have that \(\Phi_t\) and \(\Psi_t\)
coincide.
\end{proof}

We now show one reciprocal of Theorem \ref{th2}.

\begin{proposition}
 Let \(\cX\)  	be a \LB\  field  in
\(\Je\infty\) and \(A\) a finite set of multipairs that contains \(B\)
(defined by (\ref{bigb})).

For each \(k \ge |A|\), let \(\wsp{\cX}k{}\) and \(\wsp {\cY}k{}\)
be fields in \(\Je k\), according to equalities (\ref{eq5}) and (\ref{eq6}).

We suppose that for all \(k > |A|\) one has
\begin{eqnarray*}
  \wsp {\cY}k\ell \wsp \Phi k{it}  &\stackrel{(i)}=&0,\, \forall i, \forall \ell \\
  \wsp {\cY}k\ell  \Phi^{(k)\alpha}_{Lt}   &\stackrel{(ii)}=& 0,\,\forall (L,\alpha), \na\le k, \forall \ell .
\end{eqnarray*}

Then \(\cX\) is exponentiable in \(\Je\infty\).
\end{proposition}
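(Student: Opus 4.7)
The plan is to construct the flow $\Phi_t$ of $\cX$ in $\Je\infty$ by patching together the flows $\Phi^{(k)}_t$ of the finite-dimensional fields $\cX^{(k)}$ on $\Je k$. Each $\cX^{(k)}$ is $\cC^\infty$ on the finite-dimensional manifold $\Je k$, so its flow exists. I will set $\Phi_{it}:=\Phi^{(k)}_{it}$ for any $k\ge|A|$, and $\Phi^\alpha_{Lt}:=\Phi^{(k)\alpha}_{Lt}$ for any $k\ge\max(|A|,\,\na+|B|)$; by construction every such component depends on only finitely many variables. The bulk of the work is to show that this definition is unambiguous and that it gives a flow of $\cX$ in the sense of Definition \ref{defo1}.

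The algebraic input is the decomposition $\eta^\beta_M=\eta^{k\beta}_M+p^{k\beta}_M$, which yields, on any function $f$ that depends only on coordinates of $\Je k$,
\[
\cX f \;=\; \cX^{(k)}f \;+\; \sum_{\ell}m^{(k)}_\ell\,\cY^{(k)}_\ell f,
\]
where $m^{(k)}_\ell$ denotes the $\ell$-th monomial in variables of order $>k$ entering the $p^{k\beta}_M$. Specialised to $f=\Phi^{(k)}_{it}$ or $f=\Phi^{(k)\alpha}_{Lt}$, the hypotheses (i) and (ii) kill the correction, so $\cX$ and $\cX^{(k)}$ have the same action on the components of $\Phi^{(k)}_t$.

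Consistency between levels is the main obstacle. For $k'>k\ge|A|$, I observe that $\eta^{k'\beta}_M-\eta^{k\beta}_M$ collects precisely those monomials $m^{(k)}_{\ell'}$ that use only variables of order in $(k,k']$, still carrying the coefficients $c^{k\beta}_{\ell' M}$; hence on $\Phi^{(k)\alpha}_{Lt}$ the difference $(\cX^{(k')}-\cX^{(k)})\Phi^{(k)\alpha}_{Lt}$ is again a sum of such monomials paired with $\cY^{(k)}_{\ell'}\Phi^{(k)\alpha}_{Lt}=0$. Thus $\Phi^{(k)\alpha}_{Lt}$ and $\Phi^{(k')\alpha}_{Lt}$, both viewed on $\Je{k'}$, solve the same ODE $\pd{}{t}f_t=\cX^{(k')}f_t$ with the same initial condition $\ya_L$, and Lemma \ref{lemo4} forces them to agree; the analogous argument handles $\Phi_{it}$. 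Consequently $\Phi^\alpha_{Lt}$ is well defined, independent of the choice of $k$.

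Verifying the axioms of Definition \ref{defo1} then reduces component-wise to properties of $\Phi^{(k)}_t$. The initial-velocity condition $\ddt\Phi^\alpha_{Lt}\big|_{t=0}=\cX^{(k)}\ya_L=\eta^{k\alpha}_L$ coincides with $\eta^\alpha_L$ precisely because $k\ge\na+|B|$ forces $p^{k\alpha}_L=0$. The semigroup law $\Phi_t\circ\Phi_s=\Phi_{t+s}$ follows from $\Phi^{(k)}_t\circ\Phi^{(k)}_s=\Phi^{(k)}_{t+s}$ once one notes that the $\Je k$-coordinates of $\Phi_s$ agree with the corresponding components of $\Phi^{(k)}_s$ by the consistency already established. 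This yields the desired exponentiability.
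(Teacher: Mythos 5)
Your construction is correct and is essentially the paper's own: both build the flow out of the finite-dimensional flows \(\Phi^{(k)}_t\), use hypotheses (i)--(ii) together with the uniqueness statement of Lemma \ref{lemo4} to show that the components stabilize as \(k\) grows, and then verify the initial velocity and the group law componentwise (your comparison of \(\cX^{(k)}\) with \(\cX^{(k')}\) on the level-\(k\) flow components is only a minor variant of the paper's \(\omega^\alpha_{Lt}(k)\) argument, and if anything applies hypothesis (ii) more directly). The one divergence is that the paper goes on to prove the compatibility relations (\ref{eqo2}), namely \(\sum_j\lD^i\Xi_{jt}H^{\alpha+j}_{Lt}=\lD^iH^\alpha_{Lt}\), via the \(\Delta^{\alpha i}_{Lt}\) computation and Lemma \ref{lemo5}; since Definition \ref{defo1} does not require this, its omission does not leave a gap for the stated conclusion.
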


\begin{proof}
Given \(C\), a finite set of multipairs, we claim there exists an integer \(k(C)\)
such that for all \(k \ge k(C)\) e have

\begin{eqnarray*}
  \wsp \Phi k{it} &=& \wsp \Phi{k(C)}{it} \\
 \Phi^{(k)\alpha}_{Lt} &=& \Phi^{(k(C))\alpha}_{Lt}, \,\forall (L,\alpha)\in C.
\end{eqnarray*}

Indeed, for each   \((L,\alpha) \in C\),  let \( k(\alpha) =   |A|  +  \na\) , then 
\(\forall k\ge k(\alpha)\)
\begin{eqnarray*}
  \hbox{(i)} &\Rightarrow& \cX  \wsp \Phi k{it}=\wsp{\cX}k{}\wsp\Phi k{it} \\
\hbox{(ii)} &\Rightarrow&  \cX  \wsp \Phi k{Lt}=\wsp{\cX}k{}\wsp\Phi k{Lt}
\end{eqnarray*}

For each \(k\ge k(\alpha)\) define \(\omega^\alpha_{Lt}(k)=\Phi^{(k)\alpha}_{Lt}-\Phi^{(k(\alpha))\alpha}_{Lt}\).
Therefore
\[\left\{\begin{array}{l}\ds \frac d{dt}\omega^\alpha_{Lt}(k)=\wsp{\cX}k{}\omega^\alpha_{Lt}(k)\\ \\
\omega^\alpha_{L0}(k)=0\end{array}\right. \Rightarrow \omega^\alpha_{Lt}(k)=0\Rightarrow \Phi^{(k)\alpha}_{Lt}=\Phi^{(k(\alpha))\alpha}_{Lt}.\]
Thus, set \(k(C)=|A| + |C|\) and the claim, is proven.

 We proceed analogously with \(\wsp \Phi k{it}\)

We now define \(\Xi_{it}=\Phi^{|A|}_{it}\) and \(H^\alpha_{Lt} =\Phi^{(k(\alpha))\alpha}_{Lt}\).

 We have  therefore:  \(\Phi_t=(\Xi_{it},H^\alpha_{Lt})\) is a  local  action of \(\lR\),
since  \(H^\alpha_{Lt}\circ\Phi_s =H^\alpha_{Lt}(\Xi_{is},H^\beta_{Ms})=H^{k\alpha}_{Lt}(\Xi^k_{is},H^{k\beta}_{Ms})=H^{k\alpha}_{L(t+s)}= H^\alpha_{L(t+s)}\)
and, analogously,\linebreak \(\Xi_{it}\circ \Phi_s =\Xi_{i(t+s)}\).

Our second claim is that the \(\Phi_t\), defined above, satisfies  \(\ds \sum_j\lD^i\Xi_{jt}H^{\alpha+j}_{Lt} =\lD^iH^\alpha_{Lt}\)
for all \(t\).

Indeed, for each \((L, \alpha)\) and each \(i\), let
\[\Delta^{\alpha i}_{Lt}=\sum_j\lD^i\Xi_{jt}H^{\alpha+j}_{Lt}-\lD^iH^\alpha_{Lt}.\]

We have \[\ds \frac d{dt}\Delta^{\alpha i}_{Lt}=\frac d{dt}\left(\sum_j\lD^i\Xi_{jt}H^{\alpha+j}_{Lt}-\lD^iH^\alpha_{Lt}\right)=\]
\[=\sum_j\left( \lD^i(\cX  \Xi_{jt})H^{\alpha+j}_{Lt}+ \lD^i \Xi_{jt}\cX  H^{\alpha+j}_{Lt}\right)-\lD^i \cX H^\alpha_{Lt}=\]
\[=\sum_j\left(\cX (\lD^i\Xi_{jt})H^{\alpha+j}_{Lt}+ \lD^i \Xi_{jt}\cX  H^{\alpha+j}_{Lt}\right)-\cX \lD^i H^\alpha_{Lt}+\]
\[+\sum_j[\lD^i,\cX ]\Xi_{jt}H^{\alpha+j}_{Lt}-[\lD^i,\cX ] H^\alpha_{Lt}\]

Using  expression (\ref{dxc}) for the  commutator,  we have:

\[\frac d{dt} \Delta^{\alpha i}_{Lt}=\cX \Delta^{\alpha i}_{Lt}+\sum_k\lD^i\xi_k\Delta^{\alpha k}_{Lt}.\]

Thus,   \(\Delta^{\alpha i}_{L0}  =   0\),  and we have,  by  Lemma  \ref{lemo5}, that \(\Delta^{\alpha i}_{Lt}=0\).
\end{proof}

The previous proposition  provides sufficient conditions
for the exponentiability of a \LB\ field of in \(\Je\infty\), however, an infinite number of them appear.

We now show that only  a finite subset of those conditions already is sufficient for  exponentiability.

\begin{theorem}\label{th4}
Let \(\cX\) be a \LB\ field in \(\Je\infty\)
Let \(A\) be a finite set of multipairs such that \(A \subset B\), defined in (\ref{bigb}).
Let \(k = 2|A|\), \(\wsp{\cX}k{}\) and \(\wsp {\cY}k{}\)
as defined by equalities (\ref{eq5} ) and (\ref{eq6}).

We assume that \(\wsp\Phi kt\), the flow of \(\wsp{\cX}k{}\) satisfies
\[\hbox{\rm(\(\star\))}\quad\left\{\begin{array}{l} \wsp {\cY}k{\ell}\wsp\Phi k{it}=0\quad i=1,2,\dots,n,\,\ell=1,\dots,\nu(k) \\
\wsp {\cY}k{\ell}\Phi^{(k)\alpha}_{Lt}=0 \quad L=1,2,\dots,m, \na\le k, \ell=1,\dots,\nu(k) \end{array}\right. .\]
Then \(\cX\) is exponentiable.
\end{theorem}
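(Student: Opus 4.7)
The plan is to derive exponentiability from the preceding proposition, whose hypothesis requires condition $(\star)$ at every level $k > |A|$; here we are given it only at the single level $k_0 = 2|A|$. The whole argument is therefore a bootstrapping: showing that $(\star)$ at $k_0$ propagates to $(\star)$ at every $k > |A|$.

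First, I would unpack the hypothesis. Evaluating (ii) at $t = 0$ gives $\cY^{(k_0)}_\ell\,y^\alpha_L = c^{k_0,\alpha}_{\ell L} = 0$ for every $|\alpha| \le k_0$ and every $\ell$, which says each polynomial part $p^{k_0,\alpha}_L$ vanishes for $|\alpha| \le k_0$. Equivalently, each $\eta^\alpha_L$ with $|\alpha| \le 2|A|$ depends only on variables of order $\le 2|A|$, so the fields $\cY^{(k_0)}_\ell$ all vanish, $\cX^{(k_0)}$ coincides with $\cX$ on the components indexed by $|\alpha| \le k_0$, and the rest of $(\star)$ at $t > 0$ is automatic.

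Second, I would propagate $(\star)$ both upward and downward from $k_0$. Upward is the easy direction: using the \LB\ identity
\[
\eta^{\alpha+i}_L = \lD^i\eta^\alpha_L - \sum_j y^{\alpha+j}_L\,\lD^i\xi_j
\]
and the fact that $\xi_j$ depends on variables of order $\le |A|$, one application of $\lD^i$ raises the order of dependence of $\eta^\alpha_L$ by at most one, so a direct induction on $|\alpha+i|$ yields $(\star)$ at every $k \ge k_0$. Downward requires reading the same identity as an equality of functions: for $|\alpha+i| \le k$ the LHS has order $\le k$, and the term $\sum_j y^{\alpha+j}_L \lD^i \xi_j$ has order $\le k$ whenever $k \ge |A|+1$, so the order-$(k+1)$ part of $\lD^i\eta^\alpha_L$, namely $\sum_{|\gamma|=k} y^{\gamma+i}_M\,\partial\eta^\alpha_L/\partial y^\gamma_M$, must vanish identically. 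This forces $\partial\eta^\alpha_L/\partial y^\gamma_M = 0$ for $|\gamma| = k$ and every $|\alpha| \le k-1$, which combined with $(\star)$ at $k$ yields $(\star)$ at $k-1$. Iterating downward from $k_0 = 2|A|$ gives $(\star)$ at every $k$ in $[|A|+1,\,k_0]$.

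Combining both propagations yields $(\star)$ at every $k > |A|$; the preceding proposition then delivers the exponentiability of $\cX$. The delicate step is the downward propagation, which relies on a careful matching of orders in the \LB\ identity at the very top of the variable filtration, and the choice $k_0 = 2|A|$ gives comfortable room above $|A|$ so that the downward induction can reach every required intermediate level.
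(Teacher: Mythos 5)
Your opening move --- evaluating the second line of $(\star)$ at $t=0$ to get $\wsp{\cY}{k}{\ell}\,y^\alpha_L=c^{k\alpha}_{\ell L}=0$ for all $\na\le k$, and hence $\wsp{\cY}{k}{\ell}\equiv 0$ --- is where the proposal fails. That is indeed what a completely literal reading of $(\star)$ (all $t$, all $\na\le k$) yields, but it collapses the hypothesis to the degenerate case in which every $\eta^\alpha_L$ with $\na\le 2|A|$ involves no variables of order greater than $2|A|$; then all the fields $\wsp{\cY}{k'}{\ell}$ vanish for every $k'>|A|$, the preceding Proposition applies immediately, and your upward/downward bootstrap through the \LB\ identity is doing no real work. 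More importantly, this is demonstrably not the content of the theorem: in the paper's own application (Theorem \ref{th7}) one has $\wsp{\cY}{2}{R}=M^R_P\,\partial/\partial y^2_P$ with $M\neq 0$ assumed, so $\wsp{\cY}{2}{R}\,y^2_P=M^R_P\neq 0$ and your $t=0$ collapse would simply exclude that example; there the conditions of $(\star)$ are obtained (via Theorem 7(B)) as nontrivial first-integral statements for the flow components built from $x$, $y^0_L$, $y^1_L$, not as the vanishing of the correction fields. Once the collapse is disallowed, your propagation between levels has no mechanism: $(\star)$ at level $k$ concerns the flow of the single finite-dimensional field $\wsp{\cX}{k}{}$, whereas the Proposition requires the analogous statements for the flows of all the other truncations $\wsp{\cX}{k'}{}$, and order-counting in the identity $\eta^{\alpha+i}_L=\lD^i\eta^\alpha_L-\sum_j y^{\alpha+j}_L\lD^i\xi_j$ relates coefficients of $\cX$, not the flows of different truncated fields.

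The paper does not reduce Theorem \ref{th4} to the Proposition at all; it constructs the flow directly from the one level $k$. Using $(\star)$ to replace $\wsp{\cX}{k}{}$ by $\cX$ when acting on the components of $\wsp{\Phi}{k}{t}$, it introduces $\Delta^{(k)\alpha i}_{Lt}$ and applies Lemma \ref{lemo5} to show that these components satisfy the contact relations $\sum_j\lD^i\wsp{\Phi}{k}{jt}\,\Phi^{(k)\alpha+j}_{Lt}=\lD^i\Phi^{(k)\alpha}_{Lt}$; it then sets $\Xi_{it}=\wsp{\Phi}{k}{it}$ and $H^\alpha_{Lt}=\Phi^{(k)\alpha}_{Lt}$ for $\na\le k$, defines the remaining components by the operators $F^\beta_M$, verifies the group law $(\star\star)$, and finally constructs the induced local action on germs to conclude that $\Phi_t$ is a flow of $\cX$ with components depending on finitely many variables. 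That construction --- extracting an infinite-dimensional flow from finitely many conditions at a single level, precisely in the situation where the $\wsp{\cY}{k}{\ell}$ do not vanish --- is the actual content of the theorem, and it is absent from your proposal.
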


\begin{proof}
We first show that \(\ds \Phi^{(k)\beta}_{Mt} =F^\beta_M\left(\wsp \Phi k{it},\Phi^{(k)0}_{Mt}\right)\) for all
\((M, \beta), \nb \le k\). For this, it's enough that 
\[\sum_j\lD^i\wsp\Phi k{jt}\Phi^{(k)\alpha+j}_{Lt}=\lD^i\Phi^{(k)\alpha}_{Lt},\quad \forall i,\forall(L,\alpha) \,|\, \na\le k-1.\]

 Now, if \[\ds \Delta^{(k)\alpha i}_{Lt}=\sum_j\lD^i\wsp \Phi k{jt} \Phi^{(k)\alpha+j}_{Lt}-\lD^i\Phi^{(k)\alpha}_{Lt},\]
then
\[\frac d{dt}\Delta^{(k)\alpha i}_{Lt}=\sum_j\left(\lD^i\left(\wsp{\cX}k{}\wsp\Phi k{jt}\right)\Phi^{(k)\alpha+j}_{Lt} +\lD^i\wsp \Phi k{jt}\wsp \cX k{}\left(\Phi^{(k)\alpha+j}_{Lt}\right)\right)-\lD^i(\wsp{\cX}k{}\Phi^{(k)\alpha}_{Lt} )\]

Using  hypothesis (\(\star\)), one has
\[\frac d{dt}\Delta^{(k)\alpha i}_{Lt}=\sum_j\left(\lD^i\left(\cX \wsp\Phi k{jt}\right)\Phi^{(k)\alpha+j}_{Lt} +\lD^i\wsp \Phi k{jt}\cX  \left(\Phi^{(k)\alpha+j}_{Lt}\right)\right)-\lD^i(\cX \Phi^{(k)\alpha}_{Lt}) \]
and, for the commutator formula:
\[\frac d{dt}\Delta^{(k)\alpha i}_{Lt}=\cX \Delta^{(k)\alpha i}_{Lt}+\sum_j\lD^i\xi_i\Delta^{(k)\alpha j}_{Lt}\]

Using again hypothesis (\(\star\)), we can write the second
member above as
\[\wsp{\cX}k{}\Delta^{(k)\alpha i}_{Lt}+\sum_j\lD^i\xi_j\Delta^{(k)\alpha j}_{Lt}\]
With null initial conditions, Lemma 5 gives that \(\Delta^{(k)\alpha i}_{Lt}=0\)

Define now \(\Xi_{it}=\wsp \Phi k{it}\) and \(H^\alpha_{Lt}=\Phi^{(k)\alpha}_{Lt}\)
for  multipairs \((L,\alpha)\) such that \(\na \le k\).

For the rest of the multipairs let \(H^\beta_{Mt} =F^\beta_M(\Xi_{it},H^0_{Mt})\) and
\(\Phi_t=(\Xi_{it},H^\beta_{Mt})\).

Therefore, the following relations are valid

\[\hbox{\rm(\(\star\star\))}\quad\left\{
\begin{array}{l}
  H^0_{Lt}\circ \Phi_s =H^0_{Lt}(\Xi_{js},F^\beta_M(\Xi_{is},H^0_{Ms}))=\\
 =\Phi^{(k)0}_{Lt}(\wsp\Phi k{js},F^\beta(\wsp \Phi k {is},\Phi^{(k)0}_{Ms})) = \Phi^{(k)0}_{Lt}(\wsp\Phi k{js},\Phi^{(k)\beta}_{Ms})
   =\Phi^{(k)0}_{Lt+s}=H^0_{Lt+s},\\  \ \\
 \Xi_{it}\circ \Phi_s =\Xi_{it}(\Xi_{js}, F^\beta_M(\Xi_{is},H^0_{Ms}))= \\= \wsp \Phi k{it}(\wsp \Phi k {js},F^\beta(\wsp \Phi k {is},\Phi^{(k)0}_{Ms}))=
  \wsp \Phi k{it}(\wsp \Phi k {js},\Phi^{(k)\beta}_{Ms}) =\wsp \Phi k{it+s}=\Xi_{it+s}
\end{array}\right.\]

We now construct a local action of \(\lR\) in the set of germs of maps \(\cC^\infty(\lR^n,\lR^m)\).

For each function 	\(f: U\subset \lR^n 	\to		\lR^m\),  \(\Xi_{i0}\circ \tau^\infty f =\hbox{id}\),
therefore, for \(|t|\) sufficiently small and in a neighborhood \(V\) of \(x_0\) , \(\Xi_{it} \circ \tau^\infty f=\Psi_t\)
is invertible and \(H^0_{Lt}\circ \tau^\infty f\circ \Psi^{-1}_t\) is
defined in a neighborhood \(V\) of \(\Psi_t(x_0)\).

The map \(\theta_t: \{f_L\}_{x_0} \mapsto \{H^0_{Lt}\circ  \tau^\infty f\circ \Psi^{-1}_t\}_{\Psi_t(x_0)} = \{f_{Lt}\}_{\Psi_t(x_0)}\)
defines a curve in the space of germs. And, for each \(x \in V\),
\(H^0_{Lt}\circ  \tau^\infty f\circ \Psi^{-1}_t(\Psi_t(x)) =H^0_{Lt}\circ  \tau^\infty f(x)=f_{Lt}(x)\).
This means that \[\tau^0 f_{Lt}(x) = \{(\Xi_{it} \circ \tau^\infty f(x),H^0_{it}\circ \tau^\infty f(x)), x\in V\}.\]

If \(H^0_{Lt}\) and \(\Xi_{it}\)
satisfy the relations
in (\(\star\star\)), then \(\theta_t\)
is a local action and reciprocally:
\[\tau^0(\theta_t\circ \theta_sf(s))=\tau^0(\theta_t(\theta_sf(x)))=(\Xi_{it}(\tau^\infty(\theta_sf(x)),H^0_{Lt}(\tau^\infty(\theta_sf(x))) =\]
\[\left(\Xi_{it}(\Xi_{is},H^\alpha_{Ls})_{\tau^\infty f(x)},H^0_{Lt}(\Xi_{is},H^\alpha_{Ls})_{\tau^\infty f(x)}\right)=\]
\[=(\Xi_{i(t+s)},H^0_{L(t+s)})_{\tau^\infty f(x)}=\tau^\infty (\theta_{t+s}f(x)),\]
since
\[\pd{{}^{\na} f_{Lt}}{x^\alpha}(\Xi_{it}\circ \tau^\infty f(x))=H^\alpha_{Lt}\circ \tau^\infty f(x).\]

Consider the diagram below, where \(\theta_t(\{f\}_{x_0})=\{f_t\}_{\bar x_0}\) and \(\bar x_0=\Xi_{it}\circ \tau^\infty f(x_0)\) and \[\tau(\{f\}_{x_0})=\left(x_0, \pd{{}^{\na} f(x_0)}{x^\alpha}\right).\]

\begin{diagram}
\cG && \rTo^{\theta_t} && \cG \\
\dTo_\tau &&&& \dTo_\tau \\
\Je\infty &&\rFromshto^{\Phi_t} && \Je\infty \\
\end{diagram}

\(\Je\infty\)
can be identified with the quotient space
of \(\cG\) and \(\tau\) by the canonical map.

Differentiating the expression that defines \(\theta_t\),
we see that \(\tau(\theta_t\{f\}_{x_0})\) does not depend on the representative \(\{f\}_{x_0}\) of \(\tau(\{f\}_{x_0})\)
but on the  value \[\ds \tau f(x_0)=\left(x_0, \pd{{}^{\na} f(x_0)}{x^\alpha}\right).\]

As \(\theta_t\) is a local action,
\(\Phi_t\) is one also.
\end{proof}

We saw, in  Theorems \ref{th2} and \ref{th4},  proven above, that  conditions \(\wsp {\cY}k{\ell}\wsp \Phi k{it} = 0\)
and \(\wsp {\cY}k{\ell}\Phi^{(k)\alpha}_{Lt} = 0\)  are necessary and sufficient for the exponentiability of  \(\cX\). These conditions, however,
involve the resolution of a system of differentiable equations.

We wish to find, now, conditions on the components of
field \(\cX\) that do not depend on the  solution of a system of 
differential equations and that are sufficient for  the
hypotheses of Theorem \ref{th4} to be  satisfied, that is, for  Problem \ref{prob1}, below, to have solution. Besides this, if these
conditions are finite in number, we will have an effective criterion
to show that a given field is exponentiable.

Under certain hypotheses, we show, in the third part
of this work, a finite number of necessary and sufficient conditions for a particular case of a \LB\ field to be  exponentiable in \(\Jrr\).

\begin{problem}\label{prob1}
\[\left\{\begin{array}{ll}
\ds\frac d{dt}\wsp \Phi k{it}=\wsp{\cX}k{}\wsp \Phi k{it},&\quad i=1,2,\dots,n,\\ \\
\ds\frac d{dt} \Phi^{(k)\alpha}_{Lt}=\wsp{\cX}k{} \Phi^{(k)\alpha}_{Lt},&\quad L=1,2,\dots,m;\, \na\le k,\\ \\
\wsp {\cY}k{\ell}\wsp \Phi k{it}=0,&\quad i=1,2,\dots,n;\, \ell=1,2,\dots,\nu(k),\\ \\
\wsp {\cY}k{\ell} \Phi^{(k)\alpha}_{Lt}=0,&\quad L=1,2,\dots,m;\, \na\le k;\, \ell=1,2,\dots,\nu(k),\\ \\
\wsp \Phi k{i0}=x_i,&\quad i=1,2,\dots,n,\\ \\
\Phi^{(k)\alpha}_{L0}=\ya_L,&\quad L=1,2,\dots,m;\, \na\le k.
\end{array}
\right.\]
\end{problem}

 We rewrite  Problem \ref{prob1} in the following form:

\begin{problem}\label{prob2}
\[\left\{\begin{array}{ll}
\ds\frac d{dt}f_t=\cX f_t,\\ \\
{\cY}_pf_t =0,\quad p=1.2.\dots,\nu \\ \\
f_0=g, \hbox{\rm\ where\ } g \hbox{\rm \ is a given function.}
\end{array}\right.\]
\end{problem}

We observe that the symbols \(\cX\) and \(\cY_p\) used here
symbolize \(\wsp{\cX}k{}\) and \(\wsp {\cY}k{\ell}\)
respectively,  vector fields
 in spaces of finite dimension, say in \(U\subset \lR^k\), for a convenient \(k\).

Suppose there is a solution \(f_t\) of Problem \ref{prob2}.
Let \(\cL\)
be the set of fields \(\cZ\) such that for all \(t\), \(\cZ f_t = 0\).

\(\cL\) is a Lie algebra (and a module over \(\cC^\infty(U)\))
invariant under \([\cX ,\cdot]\).

Indeed \(\cZ_1 ,{\cZ}_2\in \cL \Rightarrow [{\cZ}_1,{\cZ}_2] \in \cL\), and,
\[\ds [\cX ,{\cZ}] f_t=\cX {\cZ} f_t-{\cZ}\cX  f_t=0-{\cZ}\frac d{dt} f_t =-\frac d{dt}{\cZ}f_t =0.\]

Thus, if there is a solution \(f_t\) of Problem \ref{prob2} above, then \(\cZ f_t=0\) for all \(\cZ\) belonging to the \(L\), where \(L\) is the smallest
algebra of Lie that contains \(\cY_p\) and is invariant under \([\cX ,\cdot]\).

Starting from this we ask  if  condition \(\cZ f_0=0\) for all \(\cZ \in L\), would be sufficient for the existence of a solution \(f_t\)
of the given  problem.

We consider, however, the following example for \(K = 1\):
\(\ds \cX =\frac d{dx}\) and \(\ds {\cY} = \psi \frac d{dx}\)
where \(\psi\) has the following graph:

\vskip 24pt
\includegraphics[scale=0.4]{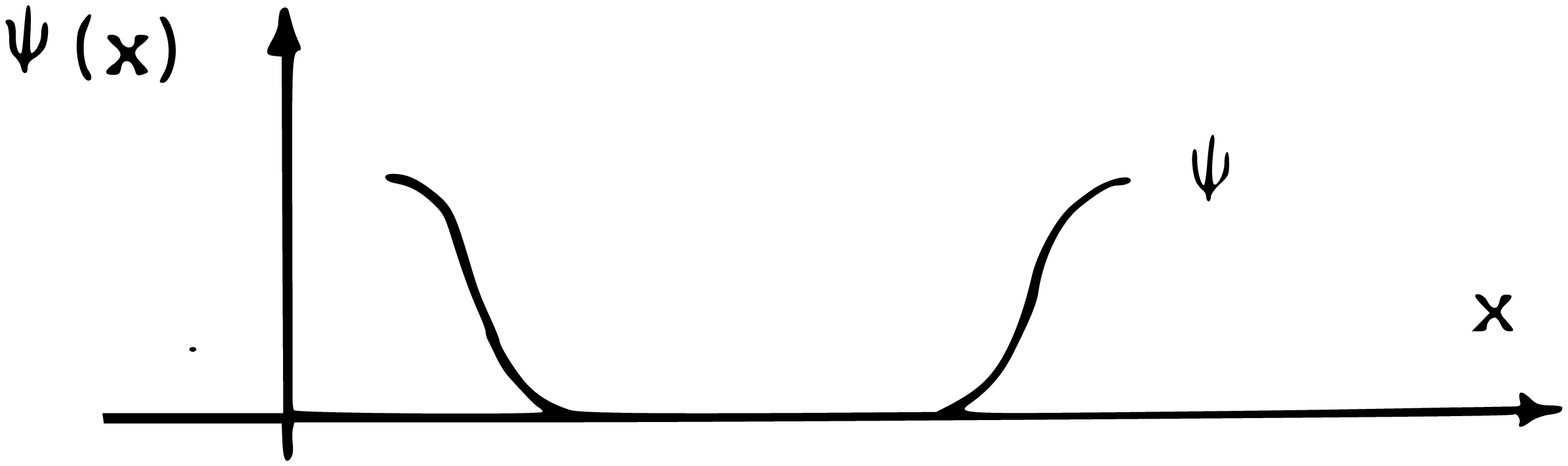}
\vskip 2ex

In this case, \(L\), the smallest Lie algebra  generated by \(\cY\) and
and invariant under \([\cX , \cdot]\), consists of elements of type
\(\ds \alpha \frac d{dx}\)
where \(\alpha\) is a polynomial in \(\psi\) and its derivatives. For example:
\[ [\cX,\cY]=\psi'\frac d{dx},\quad [\cX , [\cX ,{\cY}]=(\psi''\psi-(\psi')^2) \frac d{dx},\dots\]

Let \(\Phi_t (x)\) be the flow of \(\cX\);  \(\Phi_t (x)=x+t\) and \[\left\{\begin{array}{l}\ds \frac d{dt}f_t=\cX f_t\\
\hskip 70pt\Rightarrow f_t=g(x+t).\\
f_0 = g
\end{array}\right.
\]
Assume \(g\) has the following graph:
\vskip24pt

\begin{center}
\includegraphics[scale=0.4]{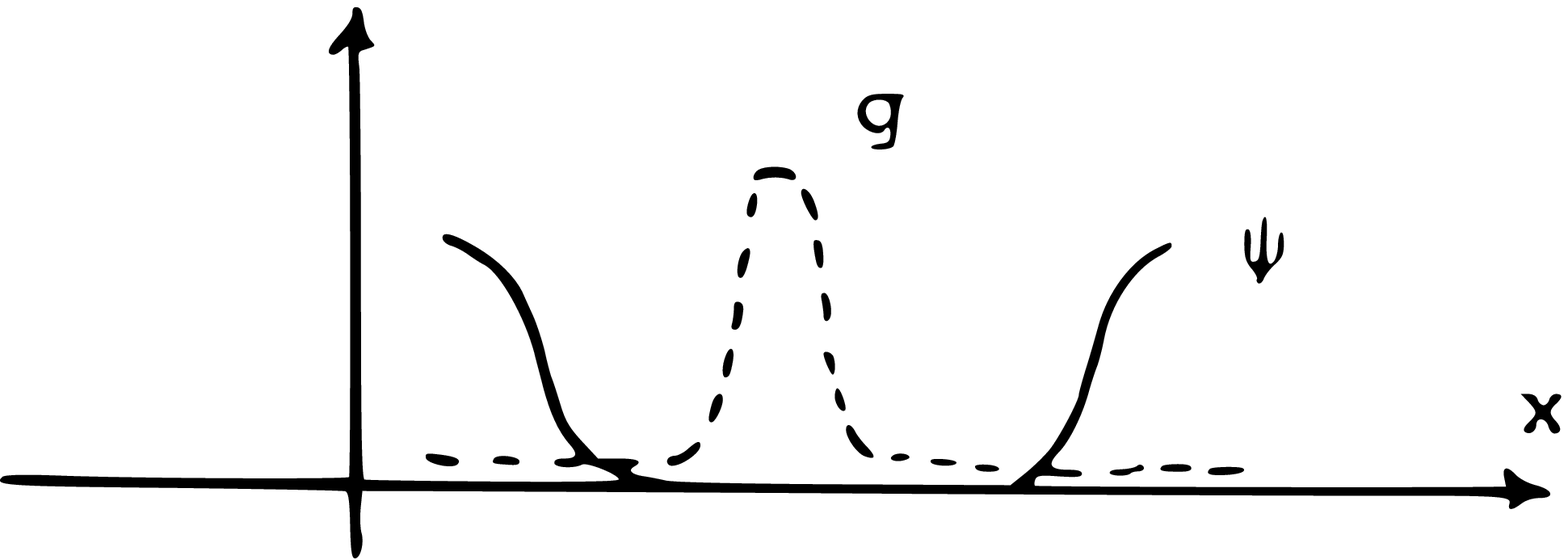}
\end{center}

Here \(\cZ g=0\) for all \(\cZ\in L\) but \(\cZ f_t \neq 0\).

Therefore the fact that \(\cZ f_0\) is null for all element of \(L\)
does not  permit, in general, to conclude \(\cZ f_t\) is null
for all \(t\) and all  \(\cZ\) in \(L\).

To guarantee this we have to impose other restrictions on the
fields \(\cX\) and \(\cY_p\), which we do in the theorems that follow.

\begin{tha}\label{th5a}
Let \(V\) be an open set of \(\lR^k\)
and \(L\) the smallest Lie algebra of fields in \(V\) that contains \(\cY_p\)
and is invariant under \([\cX , \cdot]\).
Suppose that \(L\) defines a distribution of rank \(\ell\le k\)
in a neighborhood \(W\) of \(p\in \lR^k\) and that \(\cZ g = 0\) for all \(\cZ\in L\). Then  Problem \ref{prob2} has solution.
\end{tha}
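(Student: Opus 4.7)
The plan is to reduce to Frobenius-adapted coordinates and then exhibit $f_t$ explicitly as $g$ pulled back along the flow of $\cX$. Since $L$ is a Lie algebra of $\cC^\infty$ vector fields whose pointwise span has constant rank $\ell$ on $W$, this span is an involutive distribution of rank $\ell$, and the Frobenius theorem provides, possibly after shrinking $W$, local coordinates $(u_1,\dots,u_\ell,v_1,\dots,v_{k-\ell})$ in which the distribution is spanned pointwise by $\pd{}{u_1},\dots,\pd{}{u_\ell}$. In these coordinates every $\cZ\in L$ has the form $\cZ=\sum_i\zeta_i(u,v)\pd{}{u_i}$; the assumption $\cZ g=0$ for all $\cZ\in L$ therefore forces $\pd{g}{u_i}\equiv 0$, so $g=g(v)$.

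Next I write $\cX=\sum_i a_i(u,v)\pd{}{u_i}+\sum_j b_j(u,v)\pd{}{v_j}$ and use the invariance $[\cX,L]\subset L$. For any $\cZ=\sum_i\zeta_i\pd{}{u_i}\in L$, the bracket $[\cX,\cZ]\in L$ has no $\pd{}{v_j}$-component, while a direct computation gives $[\cX,\cZ](v_j)=-\cZ(b_j)=-\sum_i\zeta_i\pd{b_j}{u_i}$; this therefore vanishes for every $\cZ\in L$. As the vectors $(\zeta_i(q))$ exhaust $\lR^\ell$ at every $q\in W$ by the rank-$\ell$ hypothesis, I conclude $\pd{b_j}{u_i}\equiv 0$, i.e., $b_j=b_j(v)$. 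This is exactly the conclusion of Lemma \ref{lemo6} read off in the adapted coordinates.

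Now define $f_t:=g\circ\Phi_t$, where $\Phi_t$ denotes the local flow of $\cX$. By Lemma \ref{lemo4}, $f_t$ is the unique solution of $\pd{f_t}{t}=\cX f_t$ with $f_0=g$. Writing $\Phi_t(u,v)=(U_t(u,v),V_t(u,v))$, the $v$-component satisfies $\dot v_j=b_j(v)$, a closed ODE in $v$ alone, so $V_t$ depends only on the initial $v$. Consequently $f_t(u,v)=g(V_t(v))$ is a function of $v$ only, whence $\pd{f_t}{u_i}\equiv 0$ for every $i$ and hence $\cZ f_t=0$ for all $\cZ\in L$; in particular $\cY_p f_t=0$ for $p=1,\dots,\nu$, which gives the desired solution of Problem \ref{prob2}.

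I expect the main difficulty to lie in the Frobenius step itself: $L$ is only assumed to be a Lie algebra of vector fields of constant rank, not a finitely generated $\cC^\infty(W)$-module, so care is needed in verifying that the rank hypothesis genuinely provides at each point of $W$ an $\ell$-frame drawn from $L$ itself, which is what legitimizes passing from ``$\sum_i\zeta_i\pd{b_j}{u_i}=0$ for all $\cZ\in L$'' to the pointwise vanishing $\pd{b_j}{u_i}\equiv 0$ in the second paragraph.
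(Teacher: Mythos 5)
Your proof is correct and follows essentially the same route as the paper: Frobenius-adapted coordinates, the observation (the content of Lemma \ref{lemo6}, which you rederive directly from $[\cX,L]\subset L$ and the pointwise spanning of the rank-$\ell$ distribution) that the transverse components of $\cX$ depend only on the transverse coordinates, and then Lemma \ref{lemo4} to produce $f_t$. The only cosmetic difference is that the paper first rewrites the equation as a flow equation in the transverse variables $\alpha_1,\dots,\alpha_{k-\ell}$ and solves that, whereas you take $f_t=g\circ\Phi_t$ in the ambient space and verify $\cZ f_t=0$ via the closed ODE for the $v$-component; your closing worry about the Frobenius step is not a real gap, since constant rank of the pointwise span of $L$ supplies the local $\ell$-frame from $L$ directly.
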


\begin{proof}
By Frobenius's theorem, there is a system of coordinates
\((U, z^1, \dots ,z^k )\) in a neighborhood \(U\) of \(p\)
such that each section determined by \(z^{\ell+1}=c_1,\dots, z^k=c_{k-\ell}\)
is an integral  manifold of L.

Calling \(\alpha_1(z) = z^{\ell+1},\dots,\alpha_{k-\ell}(z) = z^k\),
one verifies that \(\cX \alpha_i = q_i (\alpha_1,\dots,\alpha_{k-\ell})\)
(Lemma 6), and we have \(\cZ f = 0,\, \forall\, {\cZ} \in L\Rightarrow f=\phi(\alpha_1,\dots,\alpha_{k-\ell})\).

Thus condition \(\cZ f_t = 0,\, \forall \,{\cZ}\in L\) can be rewritten as
\(f_t=\Phi_t(\alpha_1,\dots,\alpha_{k-\ell})\) and we have:
\[\pd{}tf_t=\cX f_t\Leftrightarrow \pd{}t \Phi_t= \sum_{j=1}^{k-\ell} \pd{\Phi_t}{\alpha_j} (\alpha_1,\dots,\alpha_{k-\ell})\cX \alpha_j =\]
\[=\sum_{j=1}^{k-\ell} q_j (\alpha_1,\dots,\alpha_{k-\ell})\pd{\Phi_t}{\alpha_j}.\]

If the initial given condition \(g\) is such that \(\cZ g = 0\) for all \(\cZ\in L, g = \gamma(\alpha_1,\dots,\alpha_{k-\ell})\)

Then, by Lemma \ref{lemo4}, we have that \[\left\{\begin{array}{l}
\ds\pd{}t \Phi_t=\sum_{j=1}^{k-\ell}q_j(\alpha_1,\dots,\alpha_{k-\ell})\pd{\Phi_t}{\alpha_j}\\ \\
\Phi_0=\gamma(\alpha_1,\dots,\alpha_{k-\ell})
\end{array}\right.\]
has a solution.
\end{proof}

Consider again Problem \ref{prob2} above and suppose that it
has a solution \(f_t\).

The Lie algebra of vector fields \(\cZ\) such that \(\cZ f_t =0\) for all \(t\) is also a module over
\(C^\infty (U)\),
\(U\) open in \(\lR^k\).

Let V be the smallest module of vector fields on \(C^\infty (U)\)
containing \(\cY_p\)
and invariant under \(\hbox{ad}_\cX \)

Therefore, \(\cZ f_t =0\) for all \(\cZ \in V\).

Using the  previous counterexample and analogous arguments
to those used there, we see the necessity of imposing 
additional restrictions to conclude that \(\cZ f_t =0\) for all \(t\) and all \(\cZ\in V\),
besides \(\cZ f_0 = 0\) for  all \(\cZ\in V\).

\begin{thb}\label{th5b}
Let \(V\) be the smallest module of vector fields over
\(\cC^\infty (U)\), \(U\) open in \(\lR^k\),
that contains \(\cY_p\)
and is invariant under \(\adx\).
Suppose that \(V\) is finitely generated over \(\cC^\infty (U)\) and that \(\cZ g = 0\) for all \(\cZ\in V\).

Then  Problem \ref{prob2} has a solution.
\end{thb}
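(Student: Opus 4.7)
The plan is to exploit the finite generation of $V$ to reduce the a priori infinite family of constraints $\cZ f_t=0$, $\cZ\in V$, to a finite linear ODE system on scalar unknowns, to which Lemma~\ref{lemo5} applies directly. First I would invoke Lemma~\ref{lemo4} to produce the only possible candidate $f_t(x)=g\circ\Phi_t(x)$, where $\Phi_t$ is the local flow of $\cX$; by construction this solves $\partial_t f_t=\cX f_t$ with $f_0=g$, so the only thing left to verify is $\cZ f_t=0$ for every $\cZ\in V$. Pick generators $\cZ_1,\ldots,\cZ_N$ of $V$ over $\cC^\infty(U)$; $\adx$-invariance forces each bracket $[\cX,\cZ_i]$ to lie in $V$, and finite generation then yields
\[
[\cX,\cZ_i]=\sum_j a_i^{\,j}\,\cZ_j,\qquad a_i^{\,j}\in\cC^\infty(U).
\]

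The key computation is to set $h_{it}:=\cZ_i f_t$ and derive a closed linear ODE for the $h_{it}$. Commuting $\partial_t$ with the spatial derivation $\cZ_i$ and then using $\cZ_i\cX=\cX\cZ_i-[\cX,\cZ_i]$, one gets
\[
\partial_t h_{it}=\cZ_i(\cX f_t)=\cX h_{it}-\sum_j a_i^{\,j}\,h_{jt},
\]
which is exactly the form handled by Lemma~\ref{lemo5} (with the matrix field $-(a_i^{\,j})$). The initial values $h_{i0}=\cZ_i g=0$ vanish by hypothesis, so that lemma delivers $h_{it}\equiv 0$ for all $i$ and all $t$ in the interval of existence. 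Any $\cZ\in V$ writes as $\cZ=\sum_i b^i\cZ_i$ with $b^i\in\cC^\infty(U)$, hence
\[
\cZ f_t=\sum_i b^i\,h_{it}=0;
\]
in particular $\cY_p f_t=0$ for each $p$, which is what Problem~\ref{prob2} demands.

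I don't expect any genuine obstacle in this route. The only point that requires a word is the interchange of $\partial_t$ with $\cZ_i$ in the formation of $\partial_t h_{it}$, which is legitimate because $f_t=g\circ\Phi_t$ is jointly $\cC^\infty$ in $(t,x)$ on the open set where the flow is defined, so mixed partial derivatives commute. The conceptual content worth emphasising in the write-up is that \emph{finite generation} as a $\cC^\infty(U)$-module is precisely what converts the infinite closure condition $\cZ f_t=0$, $\cZ\in V$, into a finite linear system amenable to Lemma~\ref{lemo5}; this plays for Theorem~7(B) the same enabling role that the constant-rank hypothesis, together with Frobenius and Lemma~\ref{lemo6}, played for Theorem~7(A).
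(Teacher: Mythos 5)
Your argument is correct and is essentially the paper's own proof: take $f_t=g\circ\Phi_t$, use $\adx$-invariance plus finite generation to write $[\cX,\cZ_i]=\sum_j a_i^{\,j}\cZ_j$, obtain the closed linear system $\partial_t(\cZ_i f_t)=\cX(\cZ_i f_t)-\sum_j a_i^{\,j}(\cZ_j f_t)$ with vanishing initial data, and conclude by Lemma~\ref{lemo5}. Your remark on the legitimacy of interchanging $\partial_t$ with $\cZ_i$ is a small point the paper leaves implicit, but the route is the same.
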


\begin{proof}
If \(V\) is finitely generated over
\(\cC^\infty (U)\), that is, there are
\(\cC_1,\dots,\cC_n \in V\) such that: \(\ds {\cZ}\in V\Rightarrow {\cZ}=\sum_{\hbox{fin}} g_i\cC_i\)
where \(g_i \in \cC^\infty (U)\)
we wish to show therefore that \[\cC_if_0=0 \Rightarrow {\cZ}(f_0\circ \Phi_t)=0 \]
for all \(\cZ\in V\), \(|t|\) sufficiently small.
\[\left\{\begin{array}{l}
\ds\ddt f_t =\cX  f_t \\ \\
\cC_if_t=\cC_i(f_0\circ \Phi_t)
\end{array}\right.\]

\[\ddt \cC_if_t =\cC_i \ddt f_t =\cC_i\cX f_t -[\cX ,\cC_i]f_t =\]
\[=\cX \cC_if_t-\sum_j g_{ij}\cC_jf_t \quad \hbox{(by Lemma 5)} \]
\[\cC_if_t=0\Rightarrow {\cZ}f_t=0.\]
\end{proof}

\begin{thc}\label{th5c}
Let \(\cX\) be an analytic field and \(W\) the smallest space
of vector fields containing \(\cY_p\) and is invariant under \([\cX ,\cdot]\).

If \(\cZ f_0 =0\), \(\forall {\cZ}\in W\), then \(\cZ f_t = 0\), \(\forall t\).
\end{thc}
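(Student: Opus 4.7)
The plan is to prove the theorem by showing that every $t$-derivative of $\cZ f_t$ at $t=0$ vanishes and then invoking analyticity to upgrade this to $\cZ f_t \equiv 0$. The key algebraic tool is Lemma~\ref{lemo7}, which lets us rewrite $\cZ \cX^n$ as a combination of iterated brackets $\adx^{i-1}(\cZ)$, all of which lie in $W$ by hypothesis.

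Concretely, since $f_t$ satisfies $\tfrac{d}{dt} f_t = \cX f_t$, induction gives $\tfrac{d^n}{dt^n} f_t\bigr|_{t=0} = \cX^n f_0$. Because $\cZ$ acts only in the spatial coordinates and $t$ is an independent parameter, it commutes with $\tfrac{d}{dt}$, and hence
\[
\left.\frac{d^n}{dt^n}(\cZ f_t)\right|_{t=0} \;=\; \cZ \cX^n f_0.
\]
By Lemma~\ref{lemo7}, we may write $\cZ\cX^n = \sum_{i=1}^{n} c_i \cZ_i$, where $\cZ_i = \adx^{i-1}(\cZ)$ and each $c_i$ is an expression built from sums and products of $\cX$ and $\cZ$. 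Because $W$ contains $\cZ$ and is invariant under $[\cX,\cdot]$, every $\cZ_i$ belongs to $W$, and so the hypothesis gives $\cZ_i f_0 = 0$ for each $i$. Consequently $c_i(\cZ_i f_0) = c_i(0) = 0$, and summing yields $\cZ\cX^n f_0 = 0$ for every $n\ge 1$. Together with the case $n=0$, which is the hypothesis itself, every $t$-Taylor coefficient of $(\cZ f_t)(x)$ at $t=0$ vanishes.

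To conclude we invoke analyticity. Since $\cX$ is analytic, its flow $\Phi_t(x)$ depends real-analytically on $t$ on some open interval about $0$, and (taking $g$ and the $\cY_p$ in the same analytic category, which is the natural setting for the hypothesis) $f_t = g\circ\Phi_t$ is analytic in $t$, as is $\cZ f_t$. A real-analytic function on a connected interval all of whose derivatives vanish at an interior point is identically zero, so $\cZ f_t = 0$ for all $t$ in that interval.

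The step I expect to be the main obstacle is the analyticity argument: one must locate a common interval around $t=0$ on which $\Phi_t$ and $\cZ f_t$ are simultaneously represented by convergent power series in $t$, and justify that $\cZ$ applied to $g\circ\Phi_t$ preserves analytic dependence on $t$. Without real-analyticity the vanishing of every Taylor coefficient would be compatible with a non-trivial flat tail, so the analyticity hypothesis on $\cX$ — together with the implicit analyticity of the initial data and of the generators $\cY_p$ — is essential; the algebraic part of the argument (Steps involving Lemma~\ref{lemo7}) is otherwise purely formal.
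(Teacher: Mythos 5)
Your proposal is correct and follows essentially the same route as the paper: expand $f_t = f_0\circ\Phi_t$ in its Taylor series in $t$ (using analyticity), apply $\cZ$ termwise, and kill each coefficient $\cZ\cX^n f_0$ via Lemma~\ref{lemo7} together with the invariance of $W$ under $[\cX,\cdot]$ and the hypothesis $\cZ f_0=0$ for all $\cZ\in W$. Your explicit remark that the initial data must also be taken analytic for $\cZ f_t$ to be analytic in $t$ is a point the paper leaves implicit, but it does not change the argument.
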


\begin{proof}
\(\cX\) analytic \(\ds \Rightarrow f_t=(f_0\circ\Phi_t) = f_0+t\cX f_0+\cdots+\frac{t^n}{n!}\cX ^nf_0+\cdots\)
\[\pd{}{t}{\cZ}f_t ={\cZ}f_0+t{\cZ}\cX f_0+\cdots+\frac{t^n}{n!}{\cZ}\cX ^nf_0+\cdots=\]
\[\hbox{(by Lemma \ref{lemo7})}\, = {\cZ}f_0+t\{\cX {\cZ}-[\cX ,{\cZ}]\}f_0 +\cdots+\frac{t^n}{n!}\{\cX ^n{\cZ}+\cdots\}f_0=0.\]
\end{proof}

\section{Concrete example}\label{sec:terc}

Consider the fields \(\cX\) and \(\cY_p, p = l,2, \dots ,\nu\),
introduced in the observations that preceded  theorems  \hyperref[th5a]{7(A)} and \hyperref[th5b]{7(B)}.
It's easy to see that the set of vector fields
\(\{\adx^q {\cY}_p , p = 1,2, ... ,\nu;\, q=0,1,2, ... \}\) generates \(W\),
the smallest vector space that contains  \(\cY_p,\, p=1,2, ... ,\nu\)
and that is invariant under \(\adx = [\cX , \cdot]\).  The same set of fields also generates
\(V\), the smallest module of fields over \(\cC^\infty (U)\), \(U \subset \lR^k\),
that contains  \(\cY_p\) and that is invariant under \(\adx\).

On the other hand, when \(t = 0\),
\(\Xi_{it}=x_i\), \(i = 1, \dots ,n\)
and \(H^\alpha_{Lt}=y^\alpha_1\).
It is therefore necessary that \(\cZ x_i =0\,\, i=1,2, \dots ,n\)
and \(\cZ y^\alpha_L=0, \, (L,\alpha)\in A\)
(A defined in Theorem \ref{th2}), for all \(\cZ\in W\)
(respectively, all \(\cZ\in V\)),
for the  existence of a solution
to Problem \ref{prob1} and, therefore, for the exponentiability of the related \LB\ field.

In particular, setting \(\cZ = \adx^q {\cY}_p\) one verifies that a
set of necessary conditions for the exponentiability of a \LB\ field
related to Problem \ref{prob1}, should always include
the following: the coefficients of \(\pd {}{x_i}\) and \(\pd {}{y^\alpha_L}\), 
\(i=1,2, \dots ,n\) and \((L,\alpha)\in A\),
should be zero in the expressions
of \(\adx^q {\cY}_p, p = 1,2, \dots  , \nu ; q= 0,1,2, ...\)

Beyond this, if for some integer \(\tilde N\) and for all
\(p = 1,2, \dots ,\nu\), \(\adx^{\tilde N} {\cY}_p\) belongs to the module generated by
\(\adx^j {\cY}_p\),   \(0 \le j < \tilde N\),
then \(V\) is finitely generated.

We now present, an example where we show that the
minimal necessary conditions in the above sense are,
under certain hypotheses, sufficient for the exponentiability of a \LB\  field  \(\cX\) in, \(\Jrr\).

Consider a \LB\ field \(\cX\) in \(\Jrr\),
\[\cX = \xi\pd{}x+\sum_{(L,\alpha)}\eta^\alpha_L\pd{}{y^\alpha_L}\]
such that \(\xi, \eta^0_1, \eta^0_2\)
does not depend on \(\yg_M\), \(\gamma\ge 2\)

In this section we adopt the convention that repeated indices are
summed and we omit the summation symbol.

We shall also use the following notation:

\[O_L=\pd{\eta^0_L}x -y^1_L\pd \xi{x},\quad L=1,2.\]
\[N^Q_L=\pd{\eta^0_L}{y^0_Q} -y^1_L\pd \xi{y^0_Q},\quad L,Q=1,2.\]
\[M^Q_L=\pd{\eta^0_L}{y^1_Q} -y^1_L\pd \xi{y^1_Q},\quad L,Q=1,2.\]
\[Q^R_M=\xi\pd{M^R_M}x +\eta^0_K\pd{M^R_M}{y^0_K} +(O_K+y^1_QN^Q_K)\pd{M^R_M}{y^1_K}+\]
\[-\left(M^R_N\pd{O_M}{y^1_N}+M^R_QN^Q_M+M^R_Ny^1_Q\pd{N^Q_M}{y^1_N}\right).\]

\stepcounter{theorem}
\begin{theorem}\label{th6}
Let \(\cX\) be a \LB\ field  
in \(\Jrr\).
For  \(\cX\) to be exponentiable to a  flow \(\Phi_t=(\Xi_t, H^\alpha_{Lt})\)
such that \(\Xi_t\) and \(H^0_{Lt}\) do not depend on \(\yg_M, \gamma\ge2, M=1,2 \)
it is necessary that:
\[\hskip 3em \hbox{(i)}\quad M^R_M\pd\xi{y^1_M}=0;\quad M^R_M\pd{\eta^0_L}{y^1_M}=0,\quad L,R=1,2 \hbox{\ \ and\ } \]
\[\hbox{(ii)}\quad Q^R_M\pd\xi{y^1_M}=0;\quad Q^R_M\pd{\eta^0_L}{y^1_M}=0,\quad L,R=1,2  \]
\end{theorem}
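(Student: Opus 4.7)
The plan is to derive conditions (i) and (ii) by successively differentiating, in $t$ at $t=0$, the annihilation relations that exponentiability forces on the fields $\cY^{(1)}_R := M^R_M \pd{}{y^1_M}$, $R = 1, 2$. Using the \LB\ relation $\eta^1_K = \lD \eta^0_K - y^1_K \lD \xi$ together with the hypothesis that $\xi, \eta^0_K$ are independent of $\yg_M$ for $\gamma \geq 2$, one obtains $\eta^1_K = O_K + y^1_Q N^Q_K + y^2_Q M^Q_K$. I would substitute this into $\ddt H^0_{Lt} = \cX H^0_{Lt} = \xi \pd{H^0_{Lt}}{x} + \eta^0_K \pd{H^0_{Lt}}{y^0_K} + \eta^1_K \pd{H^0_{Lt}}{y^1_K}$, where the right-hand side truncates because $H^0_{Lt}$ depends only on $\yg_M$ with $\gamma \leq 1$; since the left side is independent of $y^2$, matching the coefficient of $y^2_R$ on the right yields $\cY^{(1)}_R H^0_{Lt} = 0$. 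The same reasoning applied to $\Xi_t$ gives $\cY^{(1)}_R \Xi_t = 0$. This is exactly the necessary condition supplied by Theorem \ref{th2} in the present setup, with $k = 1$ and $C = \{(L, 0) : L = 1, 2\}$.

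A single $t$-differentiation at $t = 0$ now yields (i). Since $\cY^{(1)}_R$ is $t$-independent, $0 = \cY^{(1)}_R (\cX \Xi_t)|_{t=0} = \cY^{(1)}_R \xi = M^R_M \pd{\xi}{y^1_M}$, and analogously $\cY^{(1)}_R \eta^0_L = M^R_M \pd{\eta^0_L}{y^1_M} = 0$. For (ii), I would use $\cY^{(1)}_R \cX = \cX \cY^{(1)}_R - [\cX, \cY^{(1)}_R]$ together with $\cY^{(1)}_R \Xi_t = \cY^{(1)}_R H^0_{Lt} = 0$ to obtain $[\cX, \cY^{(1)}_R] \Xi_t = 0$ and $[\cX, \cY^{(1)}_R] H^0_{Lt} = 0$; another $t$-differentiation at $t = 0$ then gives $[\cX, \cY^{(1)}_R] \xi = 0$ and $[\cX, \cY^{(1)}_R] \eta^0_L = 0$. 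It remains to show these equal $Q^R_M \pd{\xi}{y^1_M}$ and $Q^R_M \pd{\eta^0_L}{y^1_M}$. Splitting $\cX = \cX^{(1)} + (\cX - \cX^{(1)})$, a direct bracket computation---in which the $\pd{}{x}$ and $\pd{}{y^0_K}$ components of $[\cX^{(1)}, \cY^{(1)}_R]$ vanish by (i)---shows $[\cX^{(1)}, \cY^{(1)}_R] = Q^R_M \pd{}{y^1_M}$ with $Q^R_M$ exactly as in the statement.

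The main obstacle is controlling $[\cX - \cX^{(1)}, \cY^{(1)}_R]$ acting on $\xi$ and $\eta^0_L$. The $\pd{}{\ya_K}$-components of $\cX - \cX^{(1)}$ with $\na \geq 2$ annihilate $\xi, \eta^0_L$ directly, so only the $y^2_Q M^Q_K \pd{}{y^1_K}$ part contributes; its bracket with $\cY^{(1)}_R$, applied to $\xi$, leaves $y^2_Q \bigl[M^Q_K \pd{M^R_M}{y^1_K} - M^R_K \pd{M^Q_M}{y^1_K}\bigr] \pd{\xi}{y^1_M}$. Differentiating (i) in $y^1_K$ gives $\pd{M^R_M}{y^1_K} \pd{\xi}{y^1_M} = -M^R_M \pd{{}^2\xi}{y^1_K \partial y^1_M}$, after which both terms in the bracket become $-M^Q_K M^R_M \pd{{}^2\xi}{y^1_K \partial y^1_M}$ upon relabeling summation indices and invoking symmetry of the mixed partial, so they cancel. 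The identical symmetry argument works with $\eta^0_L$ in place of $\xi$, completing the identification and hence condition (ii).
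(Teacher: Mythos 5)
Your argument is correct and is essentially the paper's own proof: the paper likewise introduces \(\wsp{\cY}1R=M^R_M\pd{}{y^1_M}\) and obtains (i) and (ii) from the vanishing of the \(\pd{}{x}\)- and \(\pd{}{y^0_L}\)-coefficients of \([\wsp{\cX}1{},\wsp{\cY}1R]\) and \([\wsp{\cX}1{},[\wsp{\cX}1{},\wsp{\cY}1R]]\), which must annihilate the flow components for all \(t\) and hence the coordinate data at \(t=0\) --- the same \(\adx\)-invariance and differentiate-at-\(t=0\) mechanism you employ. The only differences are bookkeeping: you bracket with the full field \(\cX\) and then cancel the \([\cX-\wsp{\cX}1{},\wsp{\cY}1R]\) correction (correctly, via (i) and equality of mixed partials), and you re-derive the annihilation of \(\Xi_t\) and \(H^0_{Lt}\) directly from the flow equation by matching \(y^2_R\)-coefficients, where the paper instead invokes Theorem \ref{th2} and the necessity discussion surrounding Problem \ref{prob1}.
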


We note that (i) \(\Rightarrow M^2=0\).

\begin{proof}
As \(\cX\) is \LB,  we have that \(\eta^1_L=O_L+y^1_QN^Q_L+y^2_QM^Q_L\),
where, it is easy to see that \(O_L\), \(N^Q_L\), \(M^Q_L\)
do not depend on \(\yg_M\) if \(\gamma\ge 2\).

Using  definitions \ref{def5} and \ref{def6} of Section \ref{sec:segu}, we have that:
\[\wsp{\cX}1{}= \xi\pd{}x+\eta^0_L\pd{}{y^0_L}+(O_L+y^1_QN^Q_L)\pd{}{y^1_1}\]
and \(\ds \wsp {\cY}1R=M^R_M\pd{}{y^1_M}\),
are fields in \(\Jrr\).

As \[ \left[\wsp{\cX}1{},\wsp {\cY}1R\right]=-M^R_M\left(\pd{\xi}{{\cY}^1_M}\pd{}x +\pd{\eta^0_L}{y^1_M}\pd{}{y^0_L}\right) +Q^R_L\pd{}{y^1_L}\]
is a element of \(V\), we conclude (i).

Beyond this, \[ \left[\wsp{\cX}1{} \left[\wsp{\cX}1{},\wsp {\cY}1R\right]\right]=Q^R_M\left(\pd{\xi}{{\cY}^1_M}\pd{}x +\pd{\eta^0_L}{y^1_M}\pd{}{y^0_L}\right) +R^R_M\pd{}{y^1_M}\]
where \(R^R_M\) is a convenient function, is also an element of \(V\).

Thus, we have (ii) and the theorem is proved.
\end{proof}

\begin{theorem}\label{th7}
Consider a \LB\ field in \(\Jrr\),
\[\ds \cX =\xi\pd{}x+\sum_{(\alpha,L)}\eta^\alpha_L\pd{}{\ya_L}.\]

If \(M \neq 0\) at the point
\(P=(\bar x,\bar y^0_1,\bar y^0_2,\bar y^1_1,\bar y^1_2)\) in \(\Jrr\)
and  conditions (i) and (ii)
of the previous theorem are satisfied, then \(\cX\) is exponentiable to a
local flow \(\Phi_t=(\Xi_t,H^\alpha_{Lt})\),
that is defined on a set in \(\Jrr\)
that  projects to a neighborhood of \(P\), and its components \(\Xi_t\), \(H^0_{Lt}\)
do not depend on \(\yg_M\), \(\gamma \ge 2,\, M=1,2\).
\end{theorem}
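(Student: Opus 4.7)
The plan is to construct the flow of $\cX$ on $\Jrr$ by first solving a finite-dimensional problem on $J^1(\lR,\lR^2)$ and then extending via the \LB\ recursion, closely paralleling the strategy of Theorem \ref{th4}. The relevant finite-dimensional fields are $\wsp{\cX}{1}{}$ (Definition \ref{def5}) and $\wsp{\cY}{1}{R} = M^R_L \pd{}{y^1_L}$ (Definition \ref{def6}), already singled out in the proof of Theorem \ref{th6}. The aim is to produce $\Xi_t, H^0_{Lt}$ on $J^1$ satisfying both $\ddt f_t = \wsp{\cX}{1}{} f_t$ and $\wsp{\cY}{1}{R} f_t = 0$, with initial data $\Xi_0 = x$, $H^0_{L0} = y^0_L$; the higher components $H^\alpha_{Lt}$ are then defined recursively by $H^{\alpha+1}_{Lt} = \lD H^\alpha_{Lt}/\lD\Xi_t$.

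To obtain $(\Xi_t, H^0_{Lt})$ with the required $\wsp{\cY}{1}{R}$-invariance I would apply Theorem \hyperref[th5b]{7(B)} to the $\cC^\infty(U)$-module $V$ generated by $\wsp{\cY}{1}{R}$ on a neighborhood $U$ of $P$ and closed under $[\wsp{\cX}{1}{}, \cdot]$. The initial data $g \in \{x, y^0_1, y^0_2\}$ trivially satisfy $\wsp{\cY}{1}{R} g = 0$ since $\wsp{\cY}{1}{R}$ carries only $\pd{}{y^1}$ components. By the computations in the proof of Theorem \ref{th6}, conditions (i) and (ii) are exactly the statements that the iterated brackets $[\wsp{\cX}{1}{}, \wsp{\cY}{1}{R}] = Q^R_L \pd{}{y^1_L}$ and $[\wsp{\cX}{1}{}, [\wsp{\cX}{1}{}, \wsp{\cY}{1}{R}]] = R^R_L \pd{}{y^1_L}$ carry no $\pd{}{x}$ or $\pd{}{y^0_L}$ components, hence also annihilate $g$.

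The central obstacle is propagation: showing that every $\cZ \in V$ annihilates $g$ and that $V$ is finitely generated. I would exploit that $V$ sits inside the rank-$2$ $\cC^\infty$-module spanned by $\pd{}{y^1_1}$ and $\pd{}{y^1_2}$. Since $M \neq 0$ at $P$ together with $M^2 = 0$ (a consequence of (i)) give $\textrm{rank}\,M = 1$ in a neighborhood of $P$, the module $V$ starts at rank $1$; after one bracket the rank is at most $2$. If this rank reaches $2$, then $V$ coincides locally with the full $\pd{}{y^1}$-distribution and is stable under further brackets. If the rank stays at $1$, then $M^2 = 0$ combined with condition (ii) forces $R^R_L$ to be a $\cC^\infty$-multiple of $M^R_L$, so $V$ is generated by $\wsp{\cY}{1}{R}$ and $[\wsp{\cX}{1}{}, \wsp{\cY}{1}{R}]$. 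In both scenarios $V$ is finitely generated and every generator has vanishing $\pd{}{x}, \pd{}{y^0}$ components by (i) and (ii), so Theorem \hyperref[th5b]{7(B)} yields the required $\Xi_t$ and $H^0_{Lt}$.

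Given these, the extension to $H^\alpha_{Lt}$ for $|\alpha| \geq 1$ is by repeated application of $\lD/\lD\Xi_t$, with $\lD\Xi_t$ invertible for small $|t|$. The group property $\Phi_t \circ \Phi_s = \Phi_{t+s}$ reduces, through the recursion $H^\beta_{Mt} = F^\beta_M(\Xi_t, H^0_{Mt})$, to the corresponding property of the $\wsp{\cX}{1}{}$-flow on $J^1$ together with the $\wsp{\cY}{1}{R}$-invariance, exactly as in the $(\star\star)$ step of the proof of Theorem \ref{th4}. That the generator is $\cX$ then follows by verifying $\ddt H^\alpha_{Lt} = \cX H^\alpha_{Lt}$ inductively on $|\alpha|$ from equation (\ref{eqo3}), with uniqueness ensured by Lemma \ref{lemo5}. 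The flow is thus defined on the preimage in $\Jrr$ of a neighborhood of $P$ in $J^1$, and $\Xi_t, H^0_{Lt}$ depend only on the variables $x, y^0_L, y^1_L$, as claimed.
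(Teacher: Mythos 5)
Your plan diverges from the paper's proof (which works at level \(k=2\) with \(\wsp{\cX}2{}\) and \(\wsp{\cY}2R=M^R_P\pd{}{y^2_P}\), applies Theorem 7(B) and Theorem \ref{th4} there, and only afterwards removes the \(y^2_M\)-dependence), and the two places where you shortcut are exactly where it breaks. First, your finite-generation argument for the module \(V\) at level \(1\) does not hold together. The rank dichotomy is misguided: if \(V\) ever contained the full \(\pd{}{y^1}\)-distribution it would \emph{not} be stable under \([\wsp{\cX}1{},\cdot]\), since \([\wsp{\cX}1{},\pd{}{y^1_K}]\) has \(\pd{}{x}\)- and \(\pd{}{y^0_L}\)-components \(-\pd{\xi}{y^1_K}\), \(-\pd{\eta^0_L}{y^1_K}\), which cannot all vanish unless \(M=0\); such elements of \(V\) then violate \(\cZ x=0\), so Theorem 7(B) becomes inapplicable. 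In the rank-one branch, condition (ii) constrains \(Q^R_M\), not the double-bracket coefficient \(R^R_L\), and closure of a two-generator module under \emph{all} iterated brackets is nowhere argued. What is actually needed, and what the paper supplies, is the \(2\times2\) matrix fact: \(M^2=0\), \(M\neq0\), \(QM=0\) imply \(Q=PM\) with \(P\) locally smooth; then \([\wsp{\cX}1{},\wsp{\cY}1R]\) is a \(\cC^\infty\)-combination of the \(\wsp{\cY}1N\), the module generated by the \(\wsp{\cY}1R\) alone is already invariant under \([\wsp{\cX}1{},\cdot]\), and finite generation plus \(\cZ g=0\) follow. You never state or use this lemma, so the key closure step is unsupported.

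Second, and more seriously, the passage from the constrained level-\(1\) solution to a flow of \(\cX\) on \(\Jrr\) does not go ``exactly as in the \((\star\star)\) step.'' That step in Theorem \ref{th4} rests on the identification \(F^\beta_M(\Xi_s,H^0_{Ms})=\Phi^{(k)\beta}_{Ms}\) for \(\nb\le k\), which is proved from the invariance \(\wsp{\cY}k\ell\Phi^{(k)\alpha}_{Lt}=0\) of those components. At \(k=1\) the analogous invariance is false for the top components: \(\wsp{\cY}1R\Phi^{(1)1}_{L0}=M^R_L\neq0\), and indeed your \(H^1_{Lt}=\lD H^0_{Lt}/\lD\Xi_t\) depends on \(y^2_M\) in general and is \emph{not} the \(y^1\)-component of the \(\wsp{\cX}1{}\)-flow (compare the explicit \(H^1_{Lt}\) displayed after Theorem \ref{th7}). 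Consequently \(H^0_{Lt}\circ\Phi_s\), which requires substituting \(H^1_{Ms}\) into the \(y^1\)-slots of \(H^0_{Lt}\), does not reduce to the group law of \(\Phi^{(1)}_t\); the identity \(H^0_{Lt}\circ\Phi_s=H^0_{L(t+s)}\) carries genuine content (it is precisely the relation the paper later exploits to obtain the foliation-by-lines geometry). This is why the paper works at \(k=2\), where the constraint fields differentiate only along \(y^2_P\) and hence are compatible with the order-\(\le1\) components entering the compositions, and then separately proves that \(\Xi_t\), \(H^0_{Lt}\) are independent of \(y^2_M\) via the coupled linear system for \(\pd{}{y^2_M}\Phi^{(2)0}_{Lt}\) and \(\wsp{\cY}1M\Phi^{(2)0}_{Lt}\) together with Lemma \ref{lemo5}. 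Your \(k=1\) shortcut omits the machinery that justifies both the group law and the generator property for the higher components, so the proposal has a genuine gap there.
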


We first show a fact about \(2 \times 2\) matrices
that we use in the proof of this theorem: given \(2 \times 2\) matrices \(A\) and \(M\)
such that \(M^2 = 0\), \(M\neq 0\) and \(AM=0\), then there exists
a \(2 \times 2\) matrix \(B\),
such that \(A = BM\).
If \(M\) is a \(\cC^\infty\) matrix function,
then it is possible to chose \(B\) locally \(\cC^\infty\).
To see
this, call \(A_i\) and \(M_i\)
the  \(i\)-th line of \(A\) and \(M\) respectively and \(M^j\)
a \(j\)-th column of \(M\).
If \(M^2=0\), then \(\langle M_i, M^j\rangle = 0\), \(i,j = 1,2\).
As \(M\neq 0\), the space \(V\) generated by
\(M_1\) and \(M_2\) has dimension l. Now, \(AM=0\Rightarrow \langle A_i,M^j \rangle=0,\,
i,j =1,2\Rightarrow A_i \in V\).
If \(M_1\neq 0\), there are \(b_i, \,i=1,2\) such that \(A_i^j=b_iM^j_1\). Then \(\ds b_i=\frac{A^j_i}{M^j_1}\) where \(j\) is such that \(M^j_1\neq 0\),
therefore, in the case of \(M\) being a matrix function, \(b_i\) is locally \(\cC^\infty\).
Setting \(\ds B=\left(\begin{array}{cc} b_1 & 0 \\ b_2 & 0\end{array}\right)\),
we have \(A = BM\). In the other case,
if \(M_1=0\), then \(M_2\neq 0\)
and we proceed analogously to show that \(A = BM\),
where \(\ds B=\left(\begin{array}{cc} 0& b_1 \\ 0& b_2\end{array}\right)\).

\begin{proof}
  (of Theorem \ref{th7}) 

We shall use Theorems \ref{th4} and \hyperref[th5b]{7(B)} to show that these
conditions imply the existence of a flow  \(\Phi_t=(\Xi_t,H^\alpha_{Lt})\)
 such that \(\Xi_t\) and \(H^\alpha_{Lt}\)
do not depend on \(\yg_M\), \(\gamma \ge 3,\, M=1,2\).

Let the fields be given by definitions \ref{def5} and \ref{def6}:
\[\wsp{\cX}2{}= \xi\pd{}x+ \eta^0_L\pd{}{y^0_L}+\left(O_L+y^1_QN^Q_L+y^2_LM^Q_L\right)\pd{}{y^1_L}+B_L\pd{}{y^2_L}\]
where \(B_L\) does not depend on \(\yg_M\), \(\gamma \ge 3,\, M=1,2\) and \(\ds \wsp {\cY}2R=M^R_P\pd{}{y^2_P}\).

Now,
\[[\wsp{\cX}2{},\wsp {\cY}2R]=\xi\pd{M^R_P}x\pd{}{y^2_P}+\eta^0_L\pd{M^R_P}{y^0_L}\pd{}{y^2_P}+\]
\[+\left(O_L+y^1_QN^Q_L+y^2_LM^Q_L\right)\pd{M^R_P}{y^1_L}\pd{}{y^2_P} +\left(-M^R_S\pd{O_P}{y^1_S} - M^R_QN^Q_P+\right.\]
\[-\left.y^1_QM^R_S\pd{N^Q_p}{y^1_S}-M^R_Q\lD M^Q_P -y^2_QM^R_S\pd{M^Q_P}{y^1_S}+M^R_P\lD \xi\right)\pd{}{y^2_P}.\]

Observe that, given \(M^2=0\), \(M \partial\, M=-\partial M\, M\) for any
derivation \(\partial\).

Observe again that \[\pd{}{y^1_K}M^I_J=\pd{}{y^1_I}M^K_J+\delta_{I,J}\pd\xi{y^1_K}-\delta_{K,J}\pd\xi{y^1_I}\]
for all \(I,J,K = 1,2\). We  use the two observations  to see that the fifth
and the tenth terms of the expression for \([\wsp{\cX}2{},\wsp {\cY}2R]\)
are, respectively, equal to \[-y^2_Q\left(\pd{M^Q_L}{y^1_R}M^L_P+\pd\xi{y^1_R}M^Q_P\right)\pd{}{y^2_P}\]
and
\[y^2_Q\left(\pd{M^R_L}{y^1_Q}M^S_P+\pd\xi{y^1_Q}M^R_P\right)\pd{}{y^2_P}\]
and whose sum shall be written
as \(\ds B^R_NM^N_P\pd{}{y^2_P}\). 

Note also that the sum of the first through the
fourth, and the sixth through the eighth terms constitute \(\ds Q^P_R\pd{}{y^2_P}\)

As \(M^2 = 0\), \(M\neq 0\) and \(Q M =0\), we have \(Q^P_R=P^R_NM^N_P\).

Therefore, \(\ds [\cX^{(2)},\cY^{(2)}_R ] =A^R_NM^N_P\pd{}{y^2_P}\) where \(A^R_N\)
is the total of the coefficients of \(\ds M^N_P\pd{}{y^2_P}\) in the expression above.

Thus, \([\cX ^{(2)},\cY^{(2)}_R ]=A^R_N\wsp {\cY}2N\),
which  proves that the module \(V\) is finitely generated.

By Theorem \hyperref[th5b]{7(B)}, we see that \(\wsp{\cX}2{}\) and \(\wsp {\cY}2R\)
satisfy the hypotheses of Theorem \ref{th4}, thus \(\cX\) is exponentiable and its flow \(\Phi_t\) is such that \(\Xi_t=\wsp \Phi2{1t}\) and \(H^0_{Lt}=\Phi^{(2)0}_{Lt}\).
Thus, we can now affirm that \(\Xi_t\) and \(H^0_{Lt}\) do not depend
on \(\yg_M,\,\gamma \ge 3\).

We now show  that \(\Xi_t\) and \(H^0_{Lt}\) do not depend
on \(y^2_M,\,M=1,2\),
examining
\(\ds \ddt\left(\pd{}{y^2_M}\Phi^{(2)0}_{Lt}\right)\) and \(\ds \ddt\left(\pd{}{y^2_M}\Phi^{(2)}_{Lt}\right)\).

We see:

\begin{eqnarray}\nonumber
 &&\ddt\left(\pd{}{y^2_M}\Phi^{(2)0}_{Lt}\right)=\pd{}{y^2_M}(\wsp{\cX}2{}\Phi^{(2)0}_{Lt}) = \\ \nonumber
 &&=\wsp{\cX}2{}\left(\pd{}{y^2_M}\Phi^{(2)0}_{Lt}\right)-\left[\wsp{\cX}2{},\pd{}{y^2_M}\right]\Phi^{(2)0}_{Lt} =\\ \label{star}
 &&=\wsp{\cX}2{}\left(\pd{}{y^2_M}\Phi^{(2)0}_{Lt}\right)+M^M_Q\pd{}{y^1_Q}\Phi^{(2)0}_{Lt}+ D^M_Q\left(\pd{}{y^2_Q}\Phi^{(2)0}_{Lt}\right).
\end{eqnarray}
Calculating 

\(\ds \ddt\left(M^M_Q\pd{}{y^1_Q}\Phi^{(2)0}_{Lt}\right)\),
where \(\ds M^M_Q\pd{}{y^1_Q}\), as we already saw, is \(\wsp {\cY}1M\), and \[\ddt (\wsp {\cY}1M \Phi^{(2)0}_{Lt})= \wsp {\cY}1M\wsp{\cX}2{}\Phi^{(2)0}_{Lt}=\]
\[=\wsp{\cX}2{}\wsp {\cY}1M \Phi^{(2)0}_{Lt}+[\wsp {\cY}1M, \wsp{\cX}2{}]\Phi^{(2)0}_{Lt}.\]

We write \(\wsp{\cX}2{}=\wsp{\cX}1{}+(\wsp{\cX}2{}-\wsp{\cX}1{})\) and
\[\wsp{\cX}2{}-\wsp{\cX}1{}=y^2_QM^Q_K\pd{}{y^1_K}+S^R\pd{}{y^2_R}=y^2_Q\wsp {\cY}1Q+S^R\pd{}{y^2_R}\]

Thus, for \([\wsp {\cY}1M, \wsp{\cX}2{}]\) one has:
\[[\wsp {\cY}1M, \wsp{\cX}1{}]+y^2_Q[\wsp {\cY}1M, \wsp {\cY}1Q]+\wsp {\cY}1M(S^R)\pd{}{y^2_R}.\]

As \([\wsp {\cY}1M, \wsp {\cY}1Q]= T_{M,Q}\wsp {\cY}1P\),
we have that

\begin{eqnarray}\nonumber
&&\ddt (\wsp {\cY}1M \Phi^{(2)0}_{Lt})=\wsp{\cX}2{}\wsp {\cY}1M\Phi^{(2)0}_{Lt}-P^N_M(y^1_N\Phi^{(2)0}_{Lt})+ \\ \label{starstar}
&&+P^Q_M{\cY}^2_Q(\wsp {\cY}1P\Phi^{(2)0}_{Lt})+\wsp {\cY}1M(S^R)\pd{}{y^2_R}\Phi^{(2)0}_{Lt}
\end{eqnarray}

Combining  equations obtained in (\ref{star}) and (\ref{starstar}) in a system
and remembering that the initial conditions are null, we have, by
Lemma \ref{lemo5}, that \[\pd{}{y^2_M}\Phi^{(2)0}_{Lt}=0\hbox{\ and\ }\wsp {\cY}1M\Phi^{(2)0}_{Lt}=0.\]

Proceeding analogously for \(\wsp \Phi2{1t}\), we have \(\wsp \Phi2{1t}=\wsp \Phi1{1t}=\Xi_{t}\) and 
\(\Phi^{(2)0}_{Lt}=\Phi^{(1)0}_{Lt}=H^0_{Lt}\), which shows that the
necessary conditions are also sufficient for the existence of  \(\Phi_t\) with \(\Xi_{it}\) and \(H^0_{it}\) independent of \(\yg_M, \, \gamma \ge 2\).
\end{proof}

Thus, the flow \(\wsp \Phi1t\) is obtained through the following
system of ordinary differentiable equations:
\[\dot x = \xi(x, y^0_M,y^1_M);\quad L,M = 1,2\]
\[\dot y^0_L = \eta^0_L(x, y^0_M,y^1_M);\quad L,M = 1,2\]
\[\dot y^1_L = O_L +y^1_QN^Q_L;\quad L,M = 1,2\]

Using the previous theorems, we have \(\Xi_{it}=\wsp \Phi1{it}\) and \(H^0_{Lt}=\Phi^{(1)0}_{it}\).
The rest of the components of \(\Phi_t\) are obtained starting with these and  the differential operators \(F^\beta_M\).

Particularly, we suppose that \(\xi\) and \(\eta^0_L\), \(L=1,2\),
do not depend on \(x\) nor on \(y^0_M,\, M = 1,2\).
We continue to suppose that \(M\neq0\),
as in Theorem \ref{th7}.

The problem now presents interesting geometry.

The system of differentiable equations that we have
to resolver to find the flow \(\wsp \Phi1t\) is reduced to \[\dot x = \xi(y^1_1,y^1_2)\]
\[\dot y^0_L = \eta^0_L(y^1_1,y^1_2);\quad L=1,2\]
\[\dot y^1_L = 0;\quad L=1,2\]

Thus, \(\wsp\Phi1t(x,y^0_L,y^1_M)=(x+t\xi(y^1_1,y^1_2), y^0_L+t\eta^0_L(y^1_1,y^1_2), y^1_M);\,L,M=1,2\)

If \(\cX\) is exponentiable to \(\Phi_t=(\Xi_t,H^\alpha_{Lt})\), we have shown already that
\(\Xi_t=x+t\xi(y^1_1,y^1_2)\) and \(H^0_{Lt}=y^0_L+t\eta^0_L(y^1_1,y^1_2)\); \(L=1,2\)
and \(H^1_{Lt}\) satisfy \(\lD\Xi_tH^1_{Lt}=\lD H^0_{Lt}\),
since \(\cX\) is \LB. Thus
\newcommand\YD[1]{1+{#1}\left(y^2_Q\pd{\xi}{y^1_Q}\right)}
\newcommand\YF[2]{\frac{y^1_{#2}+{#1}\left(y^2_Q\pd{\eta^0_L}{y^1_Q}\right)}{\YD {#1}}}

\[H^1_{Lt}=\YF tL.\]

From exponentiability, we have that for each \(L =1,2\),

\[H^0_{Lt}\circ \Phi_s=H^0_{Lt}\left(x+t\xi, y^0_K+s\eta^0_K, \YF sK,\dots\right)_{K=1,2}=\]
\[=y^0_L+s\eta^0_L(y^1_1,y^1_2)+t\eta^0_L\left(\YF sK\right)_{K=1,2} =\]
\[H^0_{L(s+t)}=y^0_L+(s+t)\eta^0_L(y^1_1,y^1_2).\]

Therefore, defining \(\ds \nabla \psi =\left(\pd \psi{y^1_1}, \pd\psi{y^1_2}\right)\) for any function \(\psi(y^1_1,y^1_2)\),
and defining \(v=s(y^2_1,y^2_2)\), we have:
\[\eta^0_L(y^1_1,y^1_2)\stackrel{*}{=}\eta^0_L\left(y^1_1+\frac{\langle v,\nabla \eta^0_1-y^1_1\nabla \xi\rangle}{\YD s},
y^1_2+\frac{\langle v,\nabla \eta^0_2-y^1_2\nabla \xi\rangle}{\YD s}\right).\]

Consider \(W\) the manifold generated by

\[\left(\frac{\langle v,\nabla \eta^0_1-y^1_1\nabla \xi\rangle}{\YD s},
\frac{\langle v,\nabla \eta^0_2-y^1_2\nabla \xi\rangle}{\YD s}\right)\quad\]
for \((y^1_1,y^1_2)\)  in a neighborhood of \((0,0)\) and \(s\) in a neighborhood of \(0\).

If \(W\) is bidimensional,  equality (\ref{star}) shows that \(\eta^0_L\)
is constant in a neighborhood of \((p,q)\), contradicting the bidimensionality.
\(W\) can not be a point, since \(M\neq0\) in \((p,q)\), by hypothesis. \(W\) can only be  unidimensional, and by its expression, we see that must be a line.

Then, by (\ref{star}), \(\eta^0_L\), \(L=1,2\)
are constants along a
 family of lines in a neighborhood of \((p,q)\) in the variables \(y^1_1,y^1_2\).
The same happens with \(\xi(y^1_1,y^1_2)\).

We assume that this family of lines is parameterized by \(\lambda\).
\[\eta^0_1=f_1(\lambda(y^1_1,y^1_2)),\, \eta^0_2=f_2(\lambda(y^1_1,y^1_2))\hbox{\ and\ }\xi=g(\lambda(y^1_1,y^1_2)), \]
\[\nabla \eta^0_1=f'_1\nabla\lambda,\, \nabla \eta^0_2=f'_2\nabla\lambda\hbox{\ and\ }\nabla \xi=g'\nabla\lambda\]

We have then

\begin{eqnarray}\nonumber
 \nabla \eta^0_1-y^1_1\nabla \xi &=& f'_1\nabla\lambda -y^1_1g'\nabla\lambda \\
\nabla \eta^0_2-y^1_2\nabla \xi &=& f'_2\nabla\lambda -y^1_2g'\nabla\lambda.
\end{eqnarray}

We defines \[ m(\lambda)=\frac{\langle v,\eta^0_2-y^1_2\nabla \xi\rangle}{\langle v,\eta^0_1-y^1_1\nabla \xi\rangle} = \frac{f'_2 -y^1_2g'}{f'_1 -y^1_1g'}\]
that is, the angular coefficient of the lines in a neighborhood where  there are no vertical segments
(in a neighborhood of a vertical segment, we can use \(1 /{m(\lambda)}\)).

We conclude that:

\[f'_2 - y^1_2 g' = m(\lambda) (f'_1-y^1_1 g')\quad\hbox{or}\]
\[f'_2-m(\lambda)f'_1=(y^1_2-m(\lambda)y^1_1)g'\]

As \(y^1_2-m(\lambda)y^1_1\) is constant
along these lines, say, equal to \(q_0(\lambda)\), we have
\[f'_2-m(\lambda)f'_1(\lambda)=q_0(\lambda)g'(\lambda).\]

Reciprocally, given one foliation of a an open set in the \(y^1_1,\,y^1_2\) plane
by a family of segments of lines parameterized by
\(\lambda\) (see figure below), each solution \((f_1,f_2,g)\) of the  differential equation
 \(f'_2-m(\lambda)f'_1(\lambda)=q_0(\lambda)g'(\lambda)\), being \(m(\lambda)\)

\begin{center}
\includegraphics[scale=0.5]{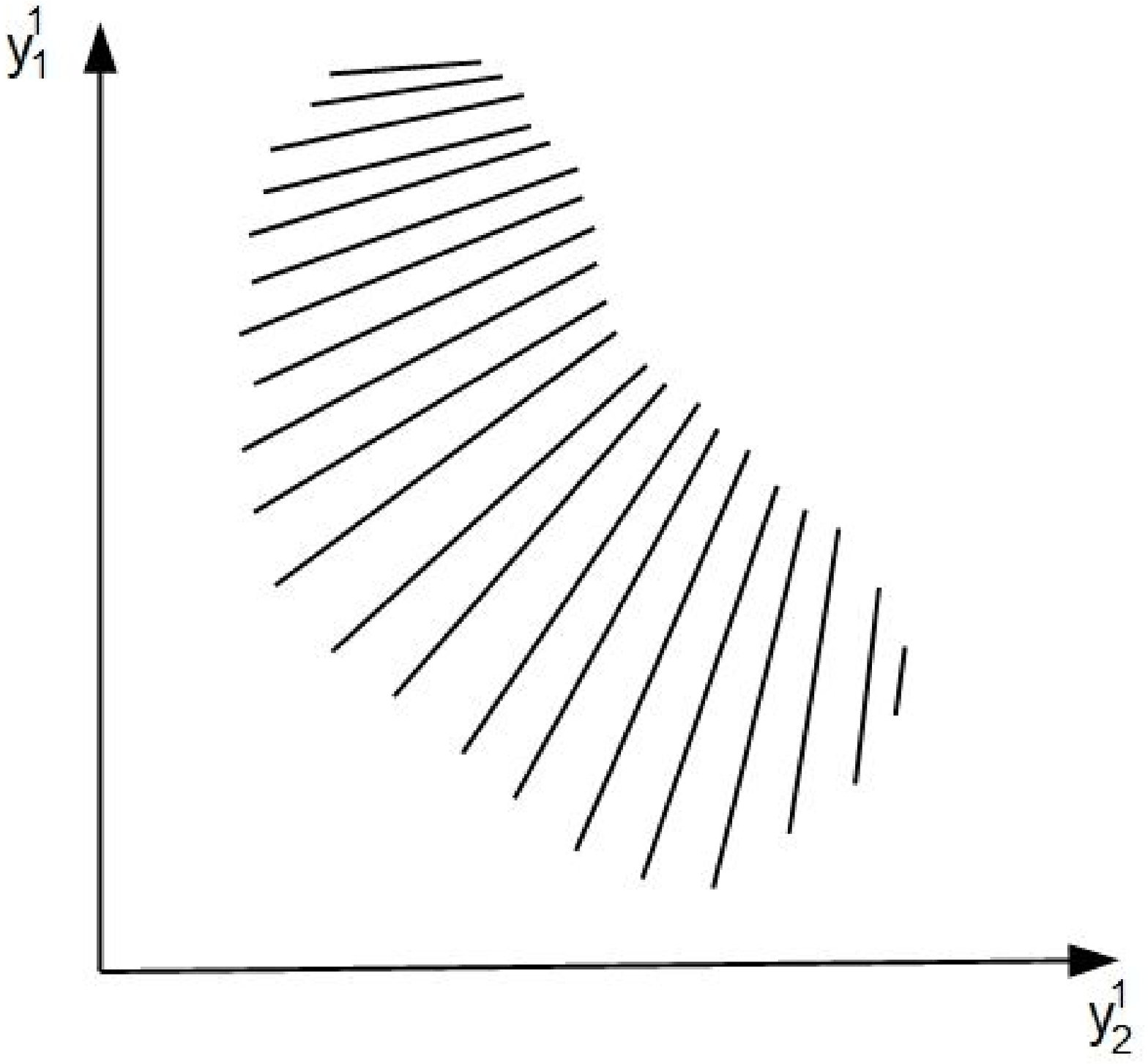}
\end{center}
\
\noindent and \(q_0(\lambda)\) determined by the foliation, determines the components
\(\eta^0_1,\,\eta^0_2\) and \(\xi\)
of a \LB\ field  exponentiable in \(\Jrr\).

As an example, take lines that intersect the  \(y^1_2\)-axis in a point \((0,\lambda)\) and whose inclination is \(\gamma\lambda\) parameterized by 
\(\lambda\), that is, \(m(\lambda) = \gamma\lambda\) and \( y^1_2=\gamma\lambda y^1_1+\lambda,\quad \lambda = {y^1_2}/(1+\gamma y^1_1)\)
\newcommand\Rat{\left(\frac{y^1_2}{1+\gamma y^1_1}\right)}
\newcommand\rat{\left(\frac{y^1_2}{y^1_1}\right)}

Thus, the equation \(f'_2(\lambda)-\gamma\lambda f'_1(\lambda)=\lambda g'(\lambda)\) becomes
\(f'_2(\lambda) = \gamma(q_0(\lambda )f_1(\lambda)D)'-\gamma f_1(\lambda) + g'(\lambda)\).
Calling \(F_1\) the primitive of \(f_1\),
we have \(f'_2=\gamma(\lambda F'_1)'-\gamma F'_1 + g'\),
that is, \(f_2=\gamma \lambda F'_1-\gamma F_1 +g\),
and thus, given arbitrary \(g\) and \(F_1\), we have that
\[\eta^0_1=F_1'(\lambda)= F'_1\Rat\]
\[\eta^0_2=\gamma\lambda F'_1\Rat-\gamma F_1\Rat + g\Rat\]
\[\xi =g\Rat\]
are the principal components of a \LB\ field
exponentiable in \(\Jrr\).

Another example is to take radial lines  parameterized by
 polar angle \(\theta\). We have then \(m(\theta) = \tan \theta\), that is,
\[\theta(y^1_1,y^1_2)=\arctan\rat.\]

Setting \(z = \tan \theta\), we have that the equation to be  solved is \(f'_2(z) = z f'_1(z)\).
Let \(F_1\) be the primitive of \(f_1\), therefore \(f_1=F'_1\); \(f_2=zF'_1-F_1\)
and thus, given arbitrary  \(F_1\) and \(g\)
\[\eta^0_1=F'_1\rat;\,\eta^0_2=\frac{y^1_2}{y^1_1} F'_1\rat -F_1\rat\]
and \(\ds \xi = g\rat\) where \(g\) is arbitrary,
are the principal
components of an exponentiable \LB\ field in \(\Jrr\) .
\hfill\eject

\section*{Acknowledgements} I once again express my highest consideration to George Svetlichny for his\linebreak extraordinary efforts in advising me on this dissertation. Along with consideration and respect, I express my sincere gratitude. This work received financial support from CAPES and CNPq, organs of the Brazilian government.


\end{document}